\newtheorem{theorem}{Theorem}
\newtheorem{proposition}[theorem]{Proposition}
\newtheorem{lemma}[theorem]{Lemma}
\newtheorem{corollary}[theorem]{Corollary}
\newtheorem{conjecture}[theorem]{Conjecture}
\theoremstyle{definition}
\newtheorem{definition}[theorem]{Definition}
\newtheorem{example}[theorem]{Example}
\newtheorem{remark}[theorem]{Remark}
\newtheorem{problem}[theorem]{Problem}
\definecolor{lightblue}{rgb}{0.8,0.8,1.0}
\definecolor{lightgreen}{rgb}{0.8,1.0,0.8}
\definecolor{pBlue}{RGB}{86,139,190}
\definecolor{pCyan}{RGB}{149,186,201}
\definecolor{pSand}{RGB}{184,166,121}
\definecolor{pAlgae}{RGB}{87,115,135}
\definecolor{pSkin}{RGB}{236,216,167}
\definecolor{pGray}{RGB}{156,175,156}
\definecolor{pPink}{RGB}{215,114,127}
\definecolor{pOrange}{RGB}{211,153,80}
\newcommand{\stirlingOne}[2]{{S_1(#1,#2)}}
\newcommand{\defin}[1]{%
\relax\ifmmode%
\textcolor{blue}{#1}%
\else\textcolor{blue}{\emph{#1}}%
\fi%
}
\newcommand{\oeis}[1]{\href{http://oeis.org/#1}{#1}}
\newcommand{\thsup}{\textnormal{th}}
\newcommand{\setN}{\mathbb{N}}
\newcommand{\xvec}{\mathbf{x}}
\newcommand{\yvec}{\mathbf{y}}
\newcommand{\wvec}{\mathbf{w}}
\newcommand{\vvec}{\mathbf{v}}
\newcommand{\symS}{\mathfrak{S}}
\newcommand{\dArr}{\mathfrak{D}}
\newcommand{\bdArr}{\mathfrak{BD}}
\newcommand{\SEF}{\mathcal{F}}  
\newcommand{\DSEF}{\mathcal{DF}}
\newcommand{\frether}{\hat{\Psi}}
\newcommand{\fretherD}{\Psi}
\NewDocumentCommand\eqCond{o}{%
\IfNoValueTF{#1}{%
{${\circledast}$}%
}{%
{${\circledast_{#1}}$}%
}}
\NewDocumentCommand\caseOne{o}{%
\IfNoValueTF{#1}{%
{\ensuremath{\heartsuit}}%
}{%
{\ensuremath{\heartsuit_{#1}}}%
}}
\NewDocumentCommand\caseTwo{o}{%
\IfNoValueTF{#1}{%
{\ensuremath{\spadesuit}}%
}{%
{\ensuremath{\spadesuit_{#1}}}%
}}
\NewDocumentCommand\caseThree{o}{%
\IfNoValueTF{#1}{%
{\ensuremath{\diamondsuit}}%
}{%
{\ensuremath{\diamondsuit_{#1}}}%
}}
\NewDocumentCommand\caseFour{o}{%
\IfNoValueTF{#1}{%
{\ensuremath{\clubsuit}}%
}{%
{\ensuremath{\clubsuit_{#1}}}%
}}
\newcommand{\img}{\mathbf{m}}
\DeclareMathOperator{\inv}{inv}
\DeclareMathOperator{\sgn}{sgn}
\DeclareMathOperator{\type}{type}
\DeclareMathOperator{\cycle}{\mathfrak{c}}
\DeclareMathOperator{\fix}{fix}
\DeclareMathOperator{\FIX}{FIX}
\DeclareMathOperator{\exc}{exc}
\DeclareMathOperator{\EXCi}{EXCi}
\DeclareMathOperator{\EXCv}{EXCv}
\DeclareMathOperator{\rlm}{rlm}
\DeclareMathOperator{\RLMv}{RLMv}
\DeclareMathOperator{\RLMi}{RLMi}
\DeclareMathOperator{\aexc}{sae} 
\DeclareMathOperator{\supp}{IM}
\DeclareMathOperator{\fixMap}{\mathtt{fix}}
\DeclareMathOperator{\unfixMap}{\mathtt{unfix}}
\DeclareMathOperator{\sefToPerm}{\mathtt{sefToPerm}}
\DeclareMathOperator{\flipMap}{\mathtt{flip}}
\DeclareMathOperator{\zetaMap}{\zeta}
\title[An involution on derangements preserving EXC and RL-minima]{An involution on derangements preserving excedances and right-to-left minima}
\author[Per Alexandersson]{Per Alexandersson}
\address{Department of Mathematics, Stockholm University, SE-106 91 Stockholm, Sweden}
\email{per.w.alexandersson@gmail.com}
\author[Frether Getachew]{Frether Getachew}
\address{Department of Mathematics, Addis Ababa University, Ethiopia}
\email{frigetach@gmail.com}
\begin{document}


\begin{abstract}
 We give a bijective proof of a result by R.~Mantaci and F.~Rakotondrajao from 2003,
 regarding even and odd derangements with a fixed number of excedances.
 We refine their result by also considering the set of right-to-left minima.
\end{abstract}

\maketitle 

\section{Introduction and preliminaries}

Let $\defin{\symS_n}$
be the symmetric group acting on the set
$\defin{[n]} \coloneqq \{1,2,\dotsc,n\}$. An integer $i\in[n]$ is said to be a \defin{fixed point} 
of a permutation $\pi\in\symS_n$ if $\pi(i)=i$. The set of fixed points of $\pi$ is denoted by $\defin{\FIX(\pi)}$
and we set $\defin{\fix(\pi)} \coloneqq |\FIX(\pi)|$.
Recall that the set of \defin{derangements} is defined as
$
\defin{\dArr_n} \coloneqq \{ \pi \in \symS_n : \fix(\pi) = 0 \}.
$

An \defin{inversion} in a permutation 
$\pi$ is a pair $(i,j)$, for 
$1\leq i < j \leq n$, such that 
$\pi(i)>\pi(j)$. The parity of the number of
inversions, $\defin{\inv(\pi)}$, in a permutation $\pi$ determines the 
parity of the permutation; $\pi$ is \defin{even} if $\inv(\pi)$ is even, otherwise $\pi$ is called an \defin{odd} permutation. 
The \defin{sign} of $\pi$, $\defin{\sgn(\pi)}$ is defined as $(-1)^{\inv(\pi)}$.
The set of even 
permutations in $\symS_n$ is denoted
$\defin{\symS^e_n}$ and the set of odd 
permutations is $\defin{\symS^o_n}$. Let 
$\defin{\dArr^e_n}$ and $\defin{\dArr^o_n}$ be 
the sets of even and odd 
derangements, respectively, in $\dArr_n$. 
Whenever $S = \{s_1,\dotsc,s_m\}$ is a finite set 
of positive integers,
we shall let $\defin{\xvec_S}$ denote the product $x_{s_1}x_{s_2}\dotsm x_{s_m}$.
By definition, $\defin{\xvec_\emptyset} \coloneqq 1$.

In order to state our results, we need to 
recall some standard notions and terminology.
For any function $g:[n]\longrightarrow [n]$, let 
the set of \defin{excedances}, \defin{excedance values},
\defin{right-to-left minima indices}, \defin{right-to-left minima values}, and \defin{the number of inversions}
respectively, be defined as 
\begin{align*}
\defin{\EXCi(g)} &\coloneqq \{ j \in [n] : g(j)>j \}, \\
\defin{\EXCv(g)} &\coloneqq \{ g(j) : j \in \EXCi(g) \}, \\
\defin{\RLMi(g)} &\coloneqq  \{i\in [n]: g(i)<g(j) \text{ for all } j\in \{i+1,\dotsc,n\} \},\\
\defin{\RLMv(g)}  &\coloneqq  \{ g(i) : i \in \RLMi(g)\}, \\
\defin{\inv(g)} &\coloneqq |\{ (i,j) : 1\leq i < j \leq n \text{ such that } g(i)>g(j) \}|.
\end{align*}
Moreover, we denote $\defin{\exc(g)} \coloneqq |\EXCi(g)|$ and $\defin{\rlm(g)} \coloneqq |\RLMi(g)| = |\RLMv(g)|$. Note that,  $|\EXCv(\sigma)|=|\EXCi(\sigma)|=\exc(\sigma)$, for any $\sigma\in \symS_n$, and indices which are not excedances are called \defin{anti-excedances}. Below we show three permutations in $\symS_7$.
The first permutation has $3$ and $6$ as 
fixed-points, so it is not a derangement,
while the remaining two are.
\begin{center}
	\begin{tabular}{l c llll}
		Permutation, $\pi$ & $\inv(\pi)$ & $\EXCi(\pi)$ &  $\RLMi(\pi)$ & $\RLMv(\pi)$  \\
		\midrule 
		2135764 & 5 & \{1,4,5\} & \{2,3,7\} & \{1,3,4\}  \\
		2153746 & 5 & \{1,3,5\} & \{2,4,6,7\} & \{1,3,4,6\}  \\
		6713245 & 11 & \{1,2\} & \{3,5,6,7\} & \{1,2,4,5\} 
	\end{tabular}
\end{center}

The right-to-left minima statistic and the excedance statistic 
behave quite differently. One can see that
\[
\sum_{\pi \in \symS_n} t^{\rlm(\pi)} = 
\sum_{\pi \in \symS_n} t^{\cycle(\pi)} =
\sum_{k=1}^n \stirlingOne{n}{k} t^k 
\]
where $\defin{\cycle(\pi)}$ is the number of cycles in cycle representation of $\pi$ 
and $\stirlingOne{n}{k}$ is the unsigned Stirling number of the first kind,
see \oeis{A008275}.
However,
\[
\sum_{\pi \in \symS_n} t^{\exc(\pi)} = 
\sum_{k=1}^{n} A_{n,k} t^{k-1} 
\]
where $A_{n,k}$ denote the Eulerian numbers, \oeis{A008292}.

\bigskip
It was shown\footnote{Their proof uses a recursion, rather than an explicit involution.}
by R.~Mantaci and F.~Rakotondrajao~\cite[Proposition 4.3]{MantaciRakotondrajao2003},
that for every $n\geq 1$ and $1 \leq k \leq n-1$,
\begin{equation}\label{eq:mainEquation}
 |\{ \pi \in \dArr^e_n  : \exc(\pi) = k \}|
 -
 |\{ \pi \in \dArr^o_n  : \exc(\pi)=k \}|
 =(-1)^{n-1}.
\end{equation}
This refines a result by Chapman, stating that
$|\dArr^e_n|-|\dArr^o_n|=(-1)^{n-1}(n-1)$, see \cite{Chapman2001}. 

\medskip
In this paper, we provide a proof for a refinement of \eqref{eq:mainEquation}, namely
\begin{equation}\label{eq:mainResult}
 \sum_{\pi \in \dArr_n} (-1)^{\inv(\pi)} \!
 \left( \hspace{5mm}
 \prod_{\mathclap{j\in \RLMv(\pi)}}  x_j 
 \hspace{3mm}
 \right)
 \!\!
 \left( \hspace{5mm}
 \prod_{\mathclap{j\in \EXCv(\pi)}}  y_{j}
 \hspace{3mm}
 \right)
 = 
 (-1)^{n-1} \sum_{j=1}^{n-1} x_1 \!\dotsm \! x_j y_{j+1}  \!\dotsm  \! y_{n},
\end{equation}
in \cref{sec:involution}.
We prove \eqref{eq:mainResult} by
exhibiting a bijection $\frether: \dArr_n \to \dArr_n$
with exactly $(n-1)$ fixed-elements, where $\frether$ acts as a sign-reversing involution outside the set of fixed-elements.
The bijection preserves
the excedance value and right-to-left minima permutation statistics,
which gives the desired result.
Moreover, \eqref{eq:mainResult} allows us to deduce that
\begin{equation*}\label{eq:mainResult2}
\sum_{\pi \in \dArr_n} (-1)^{\inv(\pi)} \!
 \left( \hspace{5mm}
 \prod_{\mathclap{j\in \RLMi(\pi)}}  x_j 
 \hspace{3mm}
 \right)
 \!\!
 \left( \hspace{5mm}
 \prod_{\mathclap{j\in \EXCi(\pi)}}  y_{j}
 \hspace{3mm}
 \right)
 = 
 (-1)^{n-1} \sum_{j=1}^{n-1} y_1 \!\dotsm \! y_j x_{j+1}  \!\dotsm  \! x_{n},
\end{equation*}
where we now consider indices instead of values.

\medskip 


We include an alternative proof, using generating functions, of the $x_1=x_2=\dotsb = x_n=1$
case of \eqref{eq:mainResult}
in \cref{sec:genFunc}. In addition, \cref{sec:genFunc} includes the proof of the following result:

\begin{equation*}\label{eq:rlminSumIntro}
 \sum_{\substack{\pi \in \symS_n   }} (-1)^{\inv(\pi)} 
 \left(\prod_{j \in \RLMv(\pi)} x_j \right)
 = 
	\big(\prod_{\substack{i \in [n] \\ i \text{ odd}}} x_i\big)
	\big(\prod_{\substack{j \in [n] \\ j \text{ even}}} (x_j-1)\big).
\end{equation*}

\section{Subexcedant functions}

The involution we shall construct is not performed directly on permutations,
but rather on so-called \emph{subexcedant functions} which are in bijection with permutations.
Our main reference is \cite{MantaciRakotondrajao2001},
where several fundamental properties are proved.
\begin{definition}\label{subex}
	A \defin{subexcedant function}  $f$ on $[n]$ is a map $f : [n] \longrightarrow [n]$
	such that
	\begin{equation*}
		1\leq f(i)\leq i \text{ for all } 1\leq i\leq n.
	\end{equation*}
	We let $\defin{\SEF_n}$ denote the set of all 
	subexcedant functions on $[n]$. 
	The \defin{image} of $f \in \SEF_n$ is defined as
	$\defin{\supp(f)} \coloneqq \{ f(i) : i \in [n] \}$.
\end{definition}
We write subexcedant functions as words, $f(1)f(2)\dotsc f(n)$.
For example, the subexcedant function $f=112352$ has $\supp(f)=\{1,2,3,5\}$.

From each subexcedant function $f \in \SEF_{n-1}$,
one can obtain $n$ distinct subexcedant functions in $\SEF_{n}$ by appending
any integer $i\in [n]$ at the end of the word representing $f$.
Hence, the cardinality of $\mathcal{F}_n$ is $n!$.
There is a bijection $\defin{\sefToPerm}: \SEF_n \longrightarrow \symS_n$,
described in \cite{MantaciRakotondrajao2001}, 
which is defined as the following composition (using cycle notation for permutations):
\begin{equation*}\label{eq:sefToPerm}
  \sefToPerm(f) \coloneqq (n\,\,f(n))  \dotsm (2\,\,f(2)) (1\,\,f(1)).
\end{equation*}

\begin{example}\label{ex:sefToPerm}
Let $f = 112435487 \in \SEF_9$.
The permutation $\sigma=\sefToPerm(f)$ is
 \begin{align*}
 	\sigma &=(9\,\,7)(8)(7\,\,4)(6\,\,5)(5\,\,3)(4)(3\,\,2)(2\,\,1)(1)\\
 	&=(1\,\,6\,\,5\,\,3\,\,2)(4\,\,9\,\,7)(8)\\
 	&=6\,1\,2\,9\,3\,5\,4\,8\,7.
 \end{align*}
\end{example}

For $\sigma \in \symS_n$ and $j \in [n]$,
it is fairly straightforward to see that we can compute
the $j^\thsup$ entry of $\sefToPerm^{-1}(\sigma)$
via the recursive formula
\begin{equation}\label{eq:permToSefRecursion}
 \sefToPerm^{-1}(\sigma)_j \coloneqq 
\begin{cases}
 \sigma(n) \text{ if $j=n$,} \\
 \sefToPerm^{-1}\big( \left(n\,\,\sigma(n)\right)\circ\sigma \big)_j &\text{ otherwise}.
\end{cases}
\end{equation}
Note that 
\begin{equation}\label{eq:permProjection}
 \sigma' \coloneqq \left(n\,\,\sigma(n)\right)\circ\sigma
\end{equation}
is the result after interchanging $n$ and the image of $n$ in $\sigma$.
Therefore, $\sigma'(n)= n$ and, by a slight abuse of notation, $\sigma'$ can be considered as a permutation in $\symS_{n-1}$.
Hence, the definition above is well-defined, and for simplicity,
we use the shorthand $\defin{f_\sigma} \coloneqq \sefToPerm^{-1}(\sigma)$.

\begin{example}\label{ex:permToSef}
We shall now show how to invert the calculation in \cref{ex:sefToPerm}.
We start with the permutation $\sigma^9=\binom{1\,2\,3\,4\,5\,6\,7\,8\,9}{6\,1\,2\,9\,3\,5\,4\,8\,7}$ using two line notation,
and for $i \geq 1$ we let $\sigma^{i-1} \in \symS_{i-1}$ be given by
\[
  \sigma^{i-1} \coloneqq (i\,\,\sigma^i(i))\circ\sigma^{i}  ,
\]
where we use the observation in \eqref{eq:permProjection}.
Combining this recursion with \eqref{eq:permToSefRecursion},
we have
\begin{equation*}
\begingroup
\renewcommand*{\arraystretch}{1.2}
\begin{array}{lr}
	\sigma^9=\binom{1\,2\,3\,4\,5\,6\,7\,8\,9}{6\,1\,2\,9\,3\,5\,4\,8\,\mathbf 7}  & f_{\sigma}(9)=7 \\
	\sigma^8=\binom{1\,2\,3\,4\,5\,6\,7\,8}{6\,1\,2\,7\,3\,5\,4\,\mathbf 8 } & f_{\sigma}(8)=8 \\
	\sigma^7=\binom{1\,2\,3\,4\,5\,6\,7}{6\,1\,2\,7\,3\,5\, \mathbf 4} & f_{\sigma}(7)=4 \\
	\sigma^6=\binom{1\,2\,3\,4\,5\,6}{6\,1\,2\,4\,3\,\mathbf  5} & f_{\sigma}(6)=5 \\
	\sigma^5=\binom{1\,2\,3\,4\,5}{5\,1\,2\,4\,\mathbf  3} & f_{\sigma}(5)=3 \\
	\sigma^4=\binom{1\,2\,3\,4}{3\,1\,2\,\mathbf  4} & f_{\sigma}(4)=4 \\
	\sigma^3=\binom{1\,2\,3}{3\,1\,\mathbf  2} & f_{\sigma}(3)=2 \\
	\sigma^2=\binom{1\,2}{2\,\mathbf  1} & f_{\sigma}(2)=1 \\
	\sigma^1=\binom{1}{\mathbf 1} & f_{\sigma}(1)=1.
\end{array}
\endgroup
\end{equation*}
Thus, $f_{\sigma}=112435487$.
\end{example}

\begin{proposition}[{See \cite[Prop. 3.5]{MantaciRakotondrajao2001}}]\label{prop:excAsIm}
For $f_{\sigma} \in \SEF_n$ we have that
$
 [n] \setminus \supp(f_{\sigma}) = \EXCv(\sigma).
$
In particular, $\exc(\sigma) = n - |\supp(f_{\sigma})|$.
\end{proposition}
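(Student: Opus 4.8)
The plan is to analyze how the composition $\sefToPerm(f) = (n\,\,f(n))\dotsm(2\,\,f(2))(1\,\,f(1))$ builds the permutation $\sigma$ from right to left, tracking at each stage which values are images of larger indices. First I would set up an induction on $n$, using the recursive structure already encoded in \eqref{eq:permToSefRecursion} and \eqref{eq:permProjection}. The base case $n=1$ is immediate, since the only subexcedant function is $f=1$, giving $\sigma = \id$, with $\supp(f)=\{1\}=[1]$ and $\EXCv(\sigma)=\emptyset$. For the inductive step, I would compare $\sigma$ with the permutation $\sigma' = (n\,\,\sigma(n))\circ\sigma \in \symS_{n-1}$ obtained by removing the last transposition, which corresponds to the subexcedant function $f'$ on $[n-1]$ obtained by deleting $f(n)$ from the word for $f$.

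The key observation I would establish is how appending the transposition $(n\,\,f(n))$ changes the excedance values. Writing $a \coloneqq f(n) \leq n$, passing from $\sigma'$ (viewed in $\symS_n$ fixing $n$) to $\sigma = (n\,\,a)\circ \sigma'$ swaps the images of the indices $n$ and the preimage of $a$. I would track the two cases $a = n$ and $a < n$ separately. When $a=n$, the transposition is trivial, $n$ becomes a fixed point of $\sigma$ (hence $n \notin \EXCv(\sigma)$), and simultaneously $n \in \supp(f)$; so both sides of $[n]\setminus\supp(f)=\EXCv(\sigma)$ gain $n$ consistently and the statement for $\sigma$ follows from that for $\sigma'$. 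When $a<n$, the value $n$ is placed at some index below $n$, creating an excedance with value $n$ (so $n \in \EXCv(\sigma)$), while $a$ is now the image of index $n$, making $a$ an anti-excedance value; meanwhile $a$ enters $\supp(f)$ via $f(n)=a$ and $n$ stays out of $\supp(f)$. I would then verify that no other excedance values are disturbed, so that $\EXCv(\sigma)$ differs from $\EXCv(\sigma')$ by removing $a$ and adding $n$, exactly mirroring how $\supp(f)$ differs from $\supp(f')$.

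The main obstacle I anticipate is the bookkeeping in the case $a<n$: one must confirm that the single swap $(n\,\,a)$ alters the excedance-value set in precisely the claimed way and does not inadvertently create or destroy excedances at other positions. This requires checking that the index $j \coloneqq (\sigma')^{-1}(a)$ satisfies $j < n$ and that after the swap the pair $(j, \sigma(j)=n)$ is an excedance with value $n$, while $(n,\sigma(n)=a)$ is an anti-excedance; and that all indices other than $j$ and $n$ keep their images, so their excedance status is unchanged. A clean way to handle this is to argue directly that $\EXCv(\sigma) = \{v \in \supp(f) : v < f^{-1}_{\max}(v)\}^{c}$ type reasoning is unnecessary—instead I would simply note that the map $\sefToPerm$ sends $f$ to a product of transpositions where the value appended at step $i$ is precisely $f(i)$, and show inductively that $i \in \EXCi(\sigma)$ with value $\sigma(i)$ exactly when $\sigma(i) \notin \supp(f)$.

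Once the inductive step is verified in both cases, the equality $[n]\setminus\supp(f_\sigma)=\EXCv(\sigma)$ holds for all $n$. The final clause $\exc(\sigma)=n-|\supp(f_\sigma)|$ is then immediate, since $\exc(\sigma) = |\EXCv(\sigma)| = |[n]\setminus\supp(f_\sigma)| = n - |\supp(f_\sigma)|$, using that the complement is taken inside the $n$-element set $[n]$.
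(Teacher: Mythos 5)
Your proposal is correct, but note that the paper itself contains no proof of this proposition: it is quoted directly from Mantaci--Rakotondrajao \cite[Prop.~3.5]{MantaciRakotondrajao2001}, so your induction supplies a self-contained argument where the paper simply defers to the literature. The structure of your argument is sound: writing $a \coloneqq f(n)$ and $\sigma = (n\,\,a)\circ\sigma'$ with $\sigma'$ the extension of $\sefToPerm(f')$ fixing $n$, the case $a=n$ adds $n$ to both $\supp(f)$ and the complement of $\EXCv(\sigma)$ consistently, while for $a<n$ one obtains
\begin{equation*}
\EXCv(\sigma) \;=\; \bigl(\EXCv(\sigma')\setminus\{a\}\bigr)\cup\{n\},
\qquad
[n]\setminus\supp(f) \;=\; \Bigl(\bigl([n-1]\setminus\supp(f')\bigr)\setminus\{a\}\Bigr)\cup\{n\},
\end{equation*}
and these agree by the induction hypothesis. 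One imprecision worth flagging: your phrasing that the swap changes $\EXCv(\sigma')$ ``by removing $a$ and adding $n$'' tacitly suggests $a$ is always an excedance value of $\sigma'$; it is not (this happens exactly when $a\notin\supp(f')$, equivalently $j<a$). But the set-difference formulation above is correct in both alternatives: if $a\in\supp(f')$ then $a\notin\EXCv(\sigma')$ by the induction hypothesis and the removal is a no-op on both sides, so the mirroring you describe still holds. Separately, the sentence in your third paragraph involving ``$f^{-1}_{\max}$'' is garbled, but since you discard that route yourself it does not affect the argument. With these readings, the inductive step and the final cardinality statement $\exc(\sigma) = |[n]\setminus\supp(f_\sigma)| = n-|\supp(f_\sigma)|$ are complete.
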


Since subexcedant functions are seen as maps $g:[n] \to [n]$,
we have the notion of right-to-left minima, fixed points, etc., as defined in the previous section.
The following proposition is reminiscent of some results in \cite{BarilVajnovszki2017},
but they consider a different bijection between permutations and subexcedant functions.
\begin{proposition}\label{prop:rlmForSEF}
Let $\pi\in\symS_n$ and $f_{\pi}$ be the corresponding subexcedant function. 
Then
\begin{enumerate}[label=(\alph*)]
    \item $i \in \RLMi(\pi)\implies\pi(i)=f_{\pi}(i)$,
    \item $\RLMv(\pi) = \RLMv(f_{\pi})$,
    \item $\RLMi(\pi) = \RLMi(f_{\pi})$.
\end{enumerate}
\end{proposition}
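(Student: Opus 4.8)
The plan is to prove parts (a), (b) and (c) simultaneously by induction on $n$, exploiting the recursive structure of $\sefToPerm^{-1}$ from \eqref{eq:permToSefRecursion}. Set $a \coloneqq \pi(n)$ and let $\sigma' \coloneqq (n\,\,a)\circ\pi \in \symS_{n-1}$ be the projected permutation of \eqref{eq:permProjection}. Two observations drive the induction. First, \eqref{eq:permToSefRecursion} computes $f_\pi(j)$ for $j<n$ by recursing on $\sigma'$, so $f_\pi(j) = f_{\sigma'}(j)$ for all $j \leq n-1$; that is, the length-$(n-1)$ prefix of $f_\pi$ is exactly $f_{\sigma'}$, and $f_\pi = f_{\sigma'}\,a$ (append $a$). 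Second, inverting \eqref{eq:permProjection} gives $\pi = (n\,\,a)\circ\sigma'$, which in one-line notation says: $\pi$ is obtained from $\sigma'$ by replacing the value $a$ (occurring at a unique position $i_a$, unless $a=n$) with the maximal value $n$, and then appending $a$ as a new last entry. Write this as $\pi = \sigma''\,a$, where $\sigma''$ denotes $\sigma'$ with $a$ overwritten by $n$ (so $\sigma''=\sigma'$ when $a=n$).

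The combinatorial engine is an elementary append lemma: for any word $w=w_1\cdots w_m$ and any value $a$, the right-to-left minima of $w\,a$ are the new final position $m+1$ together with exactly those $i\in\RLMi(w)$ with $w_i<a$. This follows by unwinding the defining inequalities and needs no distinctness of the $w_i$, so it applies equally to permutations and to subexcedant words. Applied to $f_\pi = f_{\sigma'}\,a$ it yields at once $\RLMi(f_\pi) = \{n\}\cup\{i\in\RLMi(f_{\sigma'}) : f_{\sigma'}(i)<a\}$, together with the companion description of $\RLMv(f_\pi)$; applied to $\pi = \sigma''\,a$ it gives the analogous formula with $\sigma''$ in place of $f_{\sigma'}$.

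The one genuinely non-formal step, which I expect to be the main obstacle, is rewriting the permutation-side formula in terms of $\sigma'$ rather than $\sigma''$, since this is where the difference between the permutation and its subexcedant word must be absorbed. Here I use that $\sigma''$ and $\sigma'$ differ only at $i_a$, where the value $a$ has been raised to the maximal value $n$. As $n\not<a$, the index $i_a$ is excluded by the value filter in both descriptions; and for every other index $i$ the only right-hand comparison that changes is the one against position $i_a$, which is automatically satisfied whenever the surviving condition $\sigma'(i)=\sigma''(i)<a$ holds (the entry at $i_a$, whether $a$ or $n$, then exceeds it). Hence $\{i\in\RLMi(\sigma'') : \sigma''(i)<a\} = \{i\in\RLMi(\sigma') : \sigma'(i)<a\}$, giving $\RLMi(\pi) = \{n\}\cup\{i\in\RLMi(\sigma') : \sigma'(i)<a\}$, which matches the shape found for $f_\pi$.

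Finally I invoke the induction hypothesis for $\sigma'\in\symS_{n-1}$: parts (b) and (c) give $\RLMv(\sigma')=\RLMv(f_{\sigma'})$ and $\RLMi(\sigma')=\RLMi(f_{\sigma'})$, while part (a) gives $\sigma'(i)=f_{\sigma'}(i)$ at every $i\in\RLMi(\sigma')$. These identifications make the two filtered sets literally coincide, yielding $\RLMi(\pi)=\RLMi(f_\pi)$, which is (c). Tracking the associated values, using $\pi(i)=\sigma''(i)=\sigma'(i)$ for each surviving index $i\neq i_a$ together with $\pi(n)=f_\pi(n)=a$, then yields (b) and simultaneously (a). The base case $n=1$ is immediate, since the unique permutation and its subexcedant function both equal the word $1$.
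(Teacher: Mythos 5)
Your proposal is correct and takes essentially the same route as the paper: induction on $n$ through the projection $\sigma' = (n\;\pi(n))\circ\pi$, proving (a), (b), (c) simultaneously and then invoking the inductive hypothesis to transfer the right-to-left minima data from $\sigma'$ to $f_{\sigma'}$. The only real difference is one of rigor, in your favor: your append lemma plus the $\sigma''$-versus-$\sigma'$ absorption step proves, with the correct strict filter $\sigma'(i)<a$, the recursion that the paper merely asserts ``by definition of $\pi^{n-1}$'' --- and the paper's stated index recursion $\RLMi(\pi^n)=\{j\in\RLMi(\pi^{n-1}):\pi^{n-1}(j)\leq\pi^n(n)\}\cup\{n\}$ actually has a harmless boundary slip (the position of $\pi^{n-1}$ carrying the value $\pi^n(n)$ can satisfy the non-strict filter yet fails to be a right-to-left minimum of $\pi^n$, e.g.\ for $\pi=231$), which your strict inequality avoids.
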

\begin{proof}
We use induction over $n$, where the base case for $n=1$ is trivial.
Now let $\pi^n \in\symS_n$ and define $\pi^{n-1} \in \symS_{n-1}$  as
\begin{align}\label{eq:someLemmaRec}
    \pi^{n-1}(j) \coloneqq \begin{cases}
             \pi^{n}(n) &\text{if }\pi^n(j)=n\\
             \pi^{n}(j) &\text{otherwise},
			\end{cases}
\; \text{ so that } \; 
f_{\pi^n}(j)=\begin{cases}
             \pi^n(n) &\text{if }j=n\\
             f_{\pi^{n-1}}(j) &\text{otherwise.}
             \end{cases}
\end{align}
This is the same setup as in \cref{ex:permToSef}.
By induction hypothesis, 
$\pi^{n-1}$ fulfills properties \textit{(a), (b)}, and \textit{(c)}.

\medskip 
\noindent
Now suppose $i \in \RLMi(\pi^n)$. 
We must show that $\pi^n(i)=f_{\pi^n}(i)$.

\noindent
\textbf{Case $i=n$}:
 Here, $\pi^n(i) = f_{\pi^n}(i)$, as this follows immediately \eqref{eq:someLemmaRec}.

\noindent
\textbf{Case $i<n$}: 
Now, either $\pi^n(i)=n$ or $\pi^n(i)=\pi^{n-1}(i)$. 
But $\pi^n(i)=n$ is impossible since $\pi^n(i)$ is a right-to-left minima and $i<n$.
Hence 
\begin{equation}\label{eq:someLemma}
 \pi^n(i)=\pi^{n-1}(i) \text{ and } f_{\pi^n}(i)=f_{\pi^{n-1}}(i).
\end{equation}
\medskip 
Moreover, $\pi^n(i) < \pi^n(t)$ whenever $i < t \leq n$. But $\pi^n(t)=\pi^{n-1}(t)$ whenever $\pi^n(t)\neq n$. Thus, $\pi^{n-1}(i)= \pi^n(i)<\pi^n(t)=\pi^{n-1}(t)$ when $\pi^n(t)\neq n$.

\noindent If $\pi^n(t)=n$, then $\pi^{n-1}(t)=\pi^n(n)>\pi^n(i)=\pi^{n-1}(i)$, by the first formula in
 \eqref{eq:someLemmaRec}.  
 
\medskip
\noindent In any case, $\pi^{n-1}(i)<\pi^{n-1}(t)$, for $i<n$ and whenever $i < t \leq n-1$. So $i \in \RLMi(\pi^{n-1})$. This fact, together
with \eqref{eq:someLemma} and the induction hypothesis, 
finally gives
\[
i\in \RLMi(\pi^n) \implies \pi^n(i) = \pi^{n-1}(i)=f_{\pi^{n-1}}(i)=f_{\pi^n}(i),
\]
which completes the proof of property \textit{(a)}.

\medskip 
\noindent
We proceed with \textit{(b)}. 
By definition of $\pi^{n-1}$, we have that
\begin{align*}
    \RLMv(\pi^n) &=(\RLMv(\pi^{n-1})\cap [\pi^n(n)]) \; \cup \; \{\pi^n(n)\},  \\
    &=(\RLMv(f_{\pi^{n-1}})\cap[f_{\pi^n}(n)])\cup\{f_{\pi^n}(n)\}\\
    &=\RLMv(f_{\pi^n}),
\end{align*}
where the second equality follows from the induction hypothesis.

\medskip 
By the first property and the inductive hypothesis,
we have
\begin{align*}
    \RLMi(\pi^n) &= \{j \in \RLMi(\pi^{n-1}) : \pi^{n-1}(j) \leq \pi^n(n) \} \; \cup \; \{n\} \\
    &=\{j \in \RLMi(f_{\pi^{n-1}}): f_{\pi^{n-1}}(j) \leq f_{\pi^n}(n)\} \; \cup \; \{n\}\\
    &=\RLMi(f_{\pi^n}).
\end{align*}
This concludes the proof of property \textit{(c)}.
\end{proof}

We say that $f$ has a \defin{strict anti-excedance} at $i$ if $f(i)<i$.
Let $\defin{\aexc(f)}$ denote the number of strict anti-excedances in $f$.
\begin{proposition}[{See \cite[Prop. 4.1]{MantaciRakotondrajao2001}}]\label{prop:evenSEF}
The permutation $\sigma$ is even if and only if $\aexc(f_{\sigma})$
is even.
\end{proposition}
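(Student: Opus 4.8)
The plan is to read the sign of $\sigma$ directly off the transposition factorisation that \emph{defines} the map $\sefToPerm$. Since $f_\sigma = \sefToPerm^{-1}(\sigma)$, we have
\[
 \sigma = \sefToPerm(f_\sigma) = (n\,\,f_\sigma(n))\dotsm(2\,\,f_\sigma(2))(1\,\,f_\sigma(1)),
\]
a product of $n$ factors, one cycle $(i\,\,f_\sigma(i))$ for each $i \in [n]$. The one observation that drives everything is that such a factor is a genuine transposition precisely when $f_\sigma(i) < i$, that is, exactly when $i$ is a strict anti-excedance of $f_\sigma$; when instead $f_\sigma(i) = i$ the factor $(i\,\,i)$ equals the identity and contributes nothing.

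Next I would invoke multiplicativity of the sign. Because $\sgn\colon \symS_n \to \{\pm 1\}$ is a group homomorphism, $\sgn(\sigma) = \prod_{i=1}^{n} \sgn\big((i\,\,f_\sigma(i))\big)$. Each identity factor contributes $+1$ and each genuine transposition contributes $-1$, and by the previous paragraph the number of genuine transpositions is exactly $\aexc(f_\sigma)$. Hence $\sgn(\sigma) = (-1)^{\aexc(f_\sigma)}$. The statement then follows at once: $\sigma$ is even if and only if $\sgn(\sigma) = 1$, if and only if $(-1)^{\aexc(f_\sigma)} = 1$, if and only if $\aexc(f_\sigma)$ is even.

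There is no serious obstacle here; the proof is essentially a one-line sign computation once the factorisation of $\sefToPerm$ is in hand. The only point that warrants care is the bookkeeping separating trivial from nontrivial factors, namely verifying that the indices $i$ with $f_\sigma(i) = i$ are precisely those excluded from the count $\aexc(f_\sigma)$ and that they genuinely drop out of the sign. This is immediate from the definition of a strict anti-excedance, so I would expect the whole argument to occupy only a few lines.
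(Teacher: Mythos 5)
Your proof is correct. Note that the paper itself gives no proof of this proposition --- it simply cites \cite[Prop.~4.1]{MantaciRakotondrajao2001} --- and your argument is exactly the natural one that citation covers: since $f_\sigma$ is subexcedant, each factor $(i\,\,f_\sigma(i))$ in the defining product for $\sefToPerm(f_\sigma)$ is either the identity (when $f_\sigma(i)=i$) or a genuine transposition (when $f_\sigma(i)<i$), so multiplicativity of $\sgn$ gives $\sgn(\sigma)=(-1)^{\aexc(f_\sigma)}$, which is precisely the claim.
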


A \defin{fixed point} of $f \in \SEF_n$ is an integer $i \in [n]$ such that $f(i)=i$.
Moreover, $i$ is a \defin{multiple fixed point} of $f$ if:
\begin{enumerate}
	\item $f(i)=i$ and 
	\item there is some $j>i$ such that $f(j)=i$.
\end{enumerate}

\begin{proposition}[{See \cite[Prop. 3.8]{MantaciRakotondrajao2001}}]\label{prop:derangmentSEF}
We have that $\sigma \in \dArr_n$ if and only if all
fixed points of $f_{\sigma}$ are multiple.
\end{proposition}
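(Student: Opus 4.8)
The plan is to characterize derangements via the cycle structure of $\sefToPerm(f)$, recalling that $\sefToPerm(f) = (n\,\,f(n))\dotsm(1\,\,f(1))$ is built by successively composing transpositions. The key observation is that applying the transposition $(i\,\,f(i))$ to a partially-built permutation has a predictable effect on fixed points, so I would track how fixed points are created and destroyed as $i$ runs from $1$ to $n$. First I would set up the recursion mirroring \cref{ex:permToSef}: writing $\sigma^i \coloneqq (i\,\,f(i))\dotsm(1\,\,f(1))$, note that $\sigma^n = \sefToPerm(f)$ and that $\sigma^i$ restricted to $[i]$ is a permutation of $[i]$ while fixing everything above $i$. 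The claim to prove is that $\sigma = \sigma^n$ has no fixed point in $[n]$ if and only if every $i$ with $f(i)=i$ also satisfies $f(j)=i$ for some $j>i$.

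The heart of the argument is understanding, for a fixed value $v \in [n]$, whether $v$ is ultimately a fixed point of $\sigma^n$. I would analyze the trajectory of $v$ under the successive transpositions. When we apply $(i\,\,f(i))$, the point $i$ is being "placed": at step $i$, the value currently sitting at position $i$ gets swapped with the value at position $f(i)$. The crucial case is $f(i)=i$, which corresponds to the identity transposition and hence does nothing at that step; so $i$ remains a fixed point \emph{for now}. Whether $i$ stays fixed in the final $\sigma^n$ depends entirely on whether some later transposition $(j\,\,f(j))$ with $j>i$ touches position $i$, which happens precisely when $f(j)=i$ (since for $j>i$ the larger entry $j$ cannot equal $i$, only $f(j)$ can). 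Thus $i$ with $f(i)=i$ survives as a fixed point of $\sigma^n$ if and only if no $j>i$ has $f(j)=i$, i.e.\ if and only if $i$ is \emph{not} a multiple fixed point. Conversely, I must also check that positions $i$ with $f(i) \neq i$ never end up as fixed points of $\sigma$; this should follow from \cref{prop:excAsIm}, since $i$ being a fixed point of $\sigma$ would force $i \in \EXCi(\sigma)$'s complement in a way incompatible with the image computation, or more directly by noting such $i$ is genuinely moved at step $i$ and tracking that the move is not undone.

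Combining these two directions gives the equivalence: $\sigma \in \dArr_n$ (no fixed points at all) if and only if every $i$ with $f(i)=i$ fails to be a surviving fixed point, which by the above means every such $i$ is multiple. I would phrase this cleanly by induction on $n$, using the decomposition $f_{\pi^n}(j) = f_{\pi^{n-1}}(j)$ for $j<n$ together with $f_{\pi^n}(n) = \pi^n(n)$ exactly as in \eqref{eq:someLemmaRec}, so that the inductive step reduces to analyzing the single transposition $(n\,\,f(n))$ and its interaction with fixed points of the $\symS_{n-1}$-part.

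The main obstacle I anticipate is making the informal "survives as a fixed point" bookkeeping fully rigorous, since a naive reading might worry that a position is moved and later moved back. The safeguard is the structural fact that each transposition $(i\,\,f(i))$ involves the index $i$ with $f(i)\le i$, so once we pass step $i$ the index $i$ can only be disturbed by a \emph{later} transposition through its second coordinate $f(j)=i$; the first coordinate $j$ of any later transposition is strictly larger than $i$. This monotonicity is what prevents a value from being moved and then restored, and it is the technical point I would take care to state precisely rather than gloss over.
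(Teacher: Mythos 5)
Your proof is essentially correct, and it is worth noting that the paper itself gives \emph{no} proof of this proposition: it is quoted directly from \cite[Prop.~3.8]{MantaciRakotondrajao2001}. So your trajectory-tracking argument is a genuine self-contained proof rather than a variant of one in the text, and its skeleton is sound: if $f(i)=i$, then $i$ stays fixed in every $\sigma^j$ until the first step $j>i$ with $f(j)=i$; that step sends the value $i$ to position $j>i$ (where the value $j$ sat, untouched, just before step $j$), and afterwards the value $i$ can only be moved again by steps $j'$ with $f(j')=i$, landing at position $j'>j$, so it never returns to position $i$. Likewise, if $f(i)<i$, the value $i$ is parked at a position $<i$ at step $i$ and can afterwards only jump to positions $>i$. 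Together these give exactly the dichotomy $\sigma(i)=i \iff i$ is a non-multiple fixed point of $f_\sigma$, which is the proposition.

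One slip you should repair before writing this up. Under the paper's definition $\sefToPerm(f)=(n\,\,f(n))\dotsm(1\,\,f(1))$ with the usual right-to-left composition, the step $\sigma^{i-1}\mapsto (i\,\,f(i))\circ\sigma^{i-1}$ swaps the \emph{values} $i$ and $f(i)$ wherever they sit --- that is, it exchanges the entries at positions $i$ and $(\sigma^{i-1})^{-1}(f(i))$ --- not ``the value at position $i$ with the value at position $f(i)$'' as you wrote; your description is right multiplication by $(i\,\,f(i))$ and builds $\sefToPerm(f)^{-1}$ instead. For this particular proposition the slip happens to be harmless: a permutation and its inverse have the same fixed points, and at the moments your argument needs it, the touched positions do hold their own values (value $i$ sits at position $i$ just before step $i$, value $j$ sits at position $j$ just before step $j$), so the bookkeeping goes through under either reading. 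But the distinction is not cosmetic in general --- it is exactly what \cref{prop:excAsIm} and \cref{prop:rlmForSEF} depend on --- so state the value-swap description explicitly and the monotonicity lemma (``once the value $i$ leaves position $i$ it only moves to strictly larger positions'') as a displayed claim; with that, your induction on $n$ via \eqref{eq:someLemmaRec} closes cleanly.
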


\section{An involution and its consequences}\label{sec:involution}

A subexcedant function $f$ is \defin{matchless}
if it is of the form
\[
f= 1\,1\,2\,3\,4\dotsc k{-}1\,\,k\,\,k\dotsc k \quad \text{  for some $1\leq k \leq n{-}1$}.
\]
There are $n-1$ matchless subexcedant functions of length $n$.
For example, for $n=10$, the following subexcedant functions are matchless:
\begin{align*}
 &1111111111, \quad 1122222222, \quad 1123333333, \\
 &1123444444, \quad 1123455555, \quad 1123456666, \\
 &1123456777, \quad 1123456788, \quad 1123456789. 
\end{align*}

\begin{lemma}[Properties of matchless functions]\label{lem:propertiesOfMatchless}
Let $f_{\sigma} \in \SEF_n$ be matchless.
Then $\sigma =(1\,\,k{+}1\,\,k{+}2 \; \dotsc\; n\,\,k\,\,k{-}1 \; \dotsc \; 2)$. Moreover,
 \begin{align*}
 (-1)^{\inv(\sigma)} = (-1)^{n-1}, \,\,
\EXCv(\sigma) = [n]\setminus [k],\text{ and } 
\RLMv(\sigma) = [k].
\end{align*}
\end{lemma}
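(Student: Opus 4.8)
The plan is to compute $\sigma = \sefToPerm(f_\sigma)$ directly from the definition $\sefToPerm(f) = (n\,\,f(n))\dotsm(2\,\,f(2))(1\,\,f(1))$, and then read off the three statistics from the resulting explicit cycle. Writing the matchless function as $f = 1\,1\,2\,3\dotsc(k{-}1)\,\underbrace{k\,k\dotsc k}_{n-k+1}$, I first dispose of the transpositions that act trivially: the leftmost factor $(1\,\,f(1)) = (1\,\,1)$ is the identity, and each factor $(i\,\,f(i)) = (i\,\,i)$ for $2\leq i\leq k$ (where $f(i)=i-1$, so these are genuine transpositions $(i\,\,i{-}1)$) — I need to be careful here, since for $2\le i\le k$ we have $f(i)=i-1$, giving transpositions $(i\,\,i{-}1)$, while for $k< i\le n$ we have $f(i)=k$, giving transpositions $(i\,\,k)$. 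So the composition is
\[
\sigma = (n\,\,k)(n{-}1\,\,k)\dotsm(k{+}1\,\,k)\,(k\,\,k{-}1)(k{-}1\,\,k{-}2)\dotsm(2\,\,1).
\]

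The key step is to evaluate this product of transpositions as a single cycle. I would track the image of each element by composing right-to-left, or more cleanly, verify that the claimed cycle $\sigma =(1\,\,k{+}1\,\,k{+}2\dotsc n\,\,k\,\,k{-}1\dotsc 2)$ is consistent by checking where $\sigma$ sends a representative from each regime: an element $j$ with $2\le j\le k-1$, the element $1$, the element $k$, an element in $\{k{+}1,\dots,n-1\}$, and the element $n$. The descending run of adjacent transpositions $(k\,\,k{-}1)\dotsm(2\,\,1)$ and the "star" of transpositions $(i\,\,k)$ for $i>k$ interact only through the shared point $k$, so the bookkeeping is mechanical; I expect this direct verification of the cycle structure to be the main (though routine) obstacle, as one must handle the boundary cases $j=1$, $j=k$, and $j=n$ separately from the generic ones.

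Once the cycle is established, the three statistics follow quickly. For $\EXCv$ and $\RLMv$, I would read them off from the one-line form of $\sigma$ determined by the cycle: the cycle sends $k{+}1\mapsto k{+}2\mapsto\dotsb\mapsto n$ and $1\mapsto k{+}1$, so the values exceeding their index are exactly those in $[n]\setminus[k]$, giving $\EXCv(\sigma)=[n]\setminus[k]$; alternatively, this is immediate from \cref{prop:excAsIm}, since $\supp(f_\sigma)=[k]$ for a matchless $f$, whence $\EXCv(\sigma)=[n]\setminus\supp(f_\sigma)=[n]\setminus[k]$. For $\RLMv$, the cycle sends $k\mapsto k{-}1\mapsto\dotsb\mapsto 2\mapsto 1$, so the values $1,2,\dots,k$ appear in decreasing fashion reading the word from the positions carrying them; checking the definition of right-to-left minima values then yields $\RLMv(\sigma)=[k]$.

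For the sign, I would simply count the transpositions: the product above has $(n-k)$ transpositions of the form $(i\,\,k)$ and $(k-1)$ adjacent transpositions $(i\,\,i{-}1)$, together with the trivial factor $(1\,\,1)$ which contributes nothing, for a total of $(n-k)+(k-1)=n-1$ genuine transpositions. Hence $\sgn(\sigma)=(-1)^{n-1}$, i.e. $(-1)^{\inv(\sigma)}=(-1)^{n-1}$, as claimed. Alternatively, this also follows from \cref{prop:evenSEF} by counting strict anti-excedances of $f$: one has $f(i)<i$ precisely for $i=2,\dots,k$ (giving $k-1$ of them) and for $i=k{+}1,\dots,n$ (giving $n-k$ of them), again totalling $n-1$, so $\aexc(f)=n-1$ and the parity of $\sigma$ matches.
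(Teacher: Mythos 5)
Your proof is correct, and its skeleton is the same as the paper's: obtain the explicit cycle form of $\sigma$ from the defining product of transpositions, then extract the three statistics. The difference lies in how much you verify by hand versus delegate to earlier results. The paper's proof is three sentences: the cycle form ``follows directly'' from the definition of $\sefToPerm$; the sign is $(-1)^{n-1}$ because $\sigma$ is a single cycle on $n$ elements; $\EXCv(\sigma)=[n]\setminus[k]$ is \cref{prop:excAsIm} applied to $\supp(f_\sigma)=[k]$; and $\RLMv(\sigma)=[k]$ is obtained via \cref{prop:rlmForSEF}, i.e.\ by computing the right-to-left minima of the word $f_\sigma$ itself rather than of $\sigma$. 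You instead carry out the composition $(n\,\,k)\dotsm(k{+}1\,\,k)(k\,\,k{-}1)\dotsm(2\,\,1)$ explicitly (your regime-by-regime check of the elements $1$, a generic $j\in\{2,\dots,k\}$, $k$, a generic $j\in\{k{+}1,\dots,n{-}1\}$, and $n$ is exactly the right bookkeeping and does close the ``main obstacle'' you flag), obtain the sign by counting the $n-1$ nontrivial transpositions --- which is precisely the standard proof of the single-cycle fact the paper invokes --- and read $\RLMv(\sigma)$ directly off the one-line word $k{+}1,1,2,\dots,k{-}1,k{+}2,\dots,n,k$ instead of going through \cref{prop:rlmForSEF}. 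What your route buys is self-containedness: the lemma then needs nothing beyond the definition of $\sefToPerm$; what the paper's route buys is brevity, since the propositions of Section 2 already did the work. Your two stated alternatives ($\EXCv$ via \cref{prop:excAsIm}, and the sign via \cref{prop:evenSEF} from $\aexc(f_\sigma)=n-1$) coincide with, respectively, the paper's actual argument and a close variant of it. One presentational nit: the false start claiming $(i\,\,f(i))=(i\,\,i)$ for $2\leq i\leq k$ early in your write-up should simply be deleted, since you immediately correct it and the decomposition you ultimately use is the right one.
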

\begin{proof}
The form of $\sigma$ follows directly from \eqref{eq:sefToPerm}.
Since $\sigma$ has only one cycle, its sign is $(-1)^{n-1}$. From the definition of $f_{\sigma}$, we have that
\[
 \supp(f_{\sigma}) = [k] \implies \EXCv(\sigma) = [n]\setminus [k],
\]
by \cref{prop:excAsIm}. Similarly, the last property follows from 
\cref{prop:rlmForSEF}.
\end{proof}
Note that for each $k \in [n]$, there is a unique matcheless 
subexcedant function, with $k$ excedances.
We shall see that this property gives a combinatorial interpretation of the right-hand side of
R.~Mantaci and F.~Rakotondrajao's identity in \eqref{eq:mainEquation}.

\subsection{The involution}
Let $\defin{\DSEF_n} \coloneqq \{ f_{\sigma} : \sigma\in \dArr_n \}$
  and $\defin{\DSEF^*_n} \coloneqq \{ f_{\sigma} : \sigma\in \dArr_n 
 \text{ and } f_{\sigma} \text{ is not matchless}\}$.
In other words, $\DSEF_n$ is the set of subexcedant functions 
corresponding to derangements of $[n]$. Note that every $f \in \DSEF_n$ must have at least two $1$'s
in its row representation. We also call $\sigma$ a \defin{matchless derangement} if $f_{\sigma}\in\DSEF_n$ is matchless, and we use \defin{$\dArr^*_n$} to denote the set of non-matchless derangements.
%

Our goal is now to define an involution $\fretherD:\DSEF_n\longrightarrow \DSEF_n$, 
with the following properties:
\begin{enumerate}[label=(\roman*)]
\item 
	The image is preserved, $\supp(\fretherD(f_\sigma )) = \supp( f_\sigma )$.
\item 
	The set of right-to-left minima is preserved,
	$\RLMv( \fretherD(f_\sigma ) ) = \RLMv( f_\sigma )$.
\item 
	The fixed-elements of $\fretherD$ consist of the matchless subexcedant functions.
\item 
	The sign is reversed, $\sgn(\fretherD(f_\sigma )) = -\sgn( f_\sigma )$,
	whenever $f_\sigma \in \DSEF_n^*$.
\end{enumerate}

\bigskip 

We shall define $\defin{\fretherD}:\DSEF_n \longrightarrow \DSEF_n$ below,
where $\defin{f_{\tau}}$ is short for $\fretherD(f_{\sigma})$.
First, if $f_\sigma$ is matchless, we set $f_\tau\coloneqq f_\sigma$.
Now we fix some $f_{\sigma}\in \DSEF^*_n$ and 
let 
\begin{equation*}\label{eq:supp}
  \supp(f_{\sigma})=\{\defin{ \img_1, \img_2, \img_3, \dotsc, \img_{\ell} }\}.
\end{equation*}
 Note that $\img_1=1$ and  since $f_\sigma$ is non-matchless,
we know that $\ell \geq 2$ in $\supp(f_{\sigma})$.
With these preparations, we define two auxiliary maps, 
$\fixMap_i$, $\unfixMap_i$ on subexcedant functions.
For $i \in \{2,\dotsc,\ell\}$,
\begin{equation*}
   \defin{\fixMap_i}(f_\sigma)(\img_i) \coloneqq \img_i, \qquad 
   \defin{\unfixMap_i}(f_\sigma)(\img_{i}) \coloneqq\img_{i-1}
\end{equation*}
while the remaining entries of $f_\sigma$ are untouched.
For  $i \in \{2,\dotsc,\ell\}$, we say that \defin{$f_\sigma$ satisfies \eqCond[i]} 
(or simply \defin{\eqCond[i] holds} if $f_\sigma$ is clear from the context)
if the three conditions
\begin{equation}\label{eq:case34Conditions}
\begin{aligned}
  f_{\sigma}(\img_i) < \img_i < \img_\ell,\quad
  f^{-1}_\sigma(1) = \{1,2\}, \text{ and }
  \{\img_i + 1\}\subsetneq f^{-1}_\sigma(\img_i)
\end{aligned} \tag{\text{\eqCond[i]}},
\end{equation}
hold. 
Note that 
\[
  \{\img_i + 1\}\subsetneq f^{-1}_\sigma(\img_i) \iff 
  f_\sigma(\img_i + 1) = \img_i \text{ and } |f^{-1}_\sigma(\img_i)| \geq 2.
\]

\medskip 

Now let $i\in\{2,\dotsc,\ell\}$ be the \emph{smallest} element
satisfying one of the cases below,
and let $f_\tau$ be given as described in each case.
\begin{itemize}[leftmargin=2cm]
 \item[\textbf{Case \caseOne[i]:}] If $f_{\sigma}(\img_i) = \img_i$,
	then $f_\tau \coloneqq \unfixMap_i(f_\sigma)$.

 \item[\textbf{Case \caseTwo[i]:}] If $f_{\sigma}(\img_i) < \img_i$ and $|f^{-1}_\sigma(1)| \geq 3$,
 then $f_\tau \coloneqq \fixMap_i(f_\sigma)$.
 
 \item[\textbf{Case \caseThree[i]:}] \label{case3a} 
If \eqCond[i] holds and $f_\sigma(\img_{i+1}) = \img_{i+1}$, then
$
 f_\tau \coloneqq \unfixMap_{i+1}(f_\sigma).
$

\item[\textbf{Case \caseFour[i]:}] \label{case4} 
If \eqCond[i] holds and $f_\sigma(\img_{i+1}) < \img_{i+1}$, then
$
 f_\tau \coloneqq \fixMap_{i+1}(f_\sigma).
$
\end{itemize}
Note that for the same $i$, the four cases are mutually exclusive.
We emphasize that by saying that a case with subscript $i$ holds,
this particular $i \geq 2$ is the smallest $i$ for which the conditions one of the four cases hold.

\begin{example}\label{eg:main}
	Consider the following four subexcedant functions in $\DSEF_7$.
	\begin{enumerate}
		\item Let $f_\sigma=1133535$. Then 
		$\supp(f_\sigma)=\{1, 3, 5\}$ and 
		$2$ is the smallest index greater than 
		$1$ with
		$f_\sigma(\img_2)=f_\sigma(3)=3$. Hence, $f_\sigma$ is in case 
		\caseOne[2] and $f_\tau= 
		\unfixMap_2(f_\sigma)=1113535$.
		\item Now let $f_\sigma=1121355$. Then 
		$\supp(f_\sigma)=\{1, 2, 3, 5\}$. Since $f_\sigma(2)<2$ 
		and $|f_\sigma^{-1}(1)|=3$, then $f_\sigma$ is in case \caseTwo[2]. Thus, $f_\tau= 
		\fixMap_2(f_\sigma)=1221355$.
		\item Suppose that $f_\sigma=1123535$, then 
		$\supp(f_\sigma)=\{1, 2, 3, 5\}$. 
		The index $2$ does not satisfy any of the four cases. 
		So, we consider the next integer $i=3$. We note that \eqCond[3] holds and in 
		addition, 
		$f_\sigma(\img_{4})=f_\sigma(5)=5$. 
		Hence, $f_\sigma$ fulfills \caseThree[3]  and 
		$f_\tau= 
		\unfixMap_{i+1}(f_\sigma)=\unfixMap_{4}(f
		_\sigma)=1123335$.
		\item Now take $f_\sigma=1123445$. Then 
		$\supp(f_\sigma)=\{1, 2, 3, 4, 
		5\}$. None of the four cases for $f_\sigma$ are fulfilled 
		with $i\in\{2,3\}$. However, $f_\sigma$ 
		satisfies \eqCond[4] and 
		$f_\sigma(\img_{5})=f_\sigma(5)=4<\img_{5}$. 
		Thus, we are in \caseFour[4] and $f_\tau=\fixMap_{5}(f_\sigma)=1123545$.
	\end{enumerate}
\end{example}

\begin{remark}\label{rem:case2}
 Suppose \caseTwo[i] applies for $f_\sigma$.
 Then, for sure $f_\sigma(\img_2) < \img_2$, since otherwise, we would be in the case \caseOne[2]. 
 Hence, \caseTwo[i] may only apply when $i=2$.
\end{remark}

We have several things that need to be proved.
In \cref{lem:fretherDIsWellDefined} we show that $\fretherD$ is well-defined,
and in \cref{lem:correctRange}, we show that the range is correct.
In \cref{lem:imPreserving}, we show that $\fretherD$ preserves the image. Finally, in \cref{lem:rlmPreserving}, we show that $\fretherD$ preserves the right-to-left minima set.
In \cref{lem:signReversing} and \cref{lem:involution}, we show that  $\fretherD$ is sign-reversing on $\DSEF^*_n$ and $\fretherD$ is indeed an involution, respectively.
\smallskip

It is clear from the definition of $\fretherD$ that at most one of the cases applies for any $f_\sigma\in\DSEF^*_n$. For the well-definedness of $\fretherD$, we must also verify that at least one of the cases applies.

\begin{lemma}[Well-defined]\label{lem:fretherDIsWellDefined}
Let $f_\sigma \in \DSEF_n$ with $\ell$ elements in its image.
If none of the four cases ($\caseOne,\caseTwo,\caseThree,\caseFour$)
applies to $f_\sigma$, then $f_\sigma$ is matchless.

Moreover, if no $i \in \{2,\dotsc,t\}$ fulfills any of $\caseOne_i,\caseTwo_i,\caseThree_i,\caseFour_i$,
conditions for some $t\in[\ell]$,
and either $t=\ell$ or cases \caseOne[t+1] and \caseTwo[t+1] do not hold, then 
\begin{equation}\label{eq:sefPartialset}
f_\sigma(j)=\max\{1,j-1\},\text{ for all }j\in[t+1].
\end{equation}
Consequently, the prefix
\begin{equation}\label{eq:sefPartial}
	f_\sigma(1)\; f_\sigma(2)\; \dotsc \; f_\sigma(t)  \; f_\sigma(t+1) 
\end{equation}
is matchless. 
In addition, if $\ell=t$, then 
\begin{equation}\label{eq:matchlessfsigma}
	f_\sigma = 1\; 1\;2\;3\; \dotsc \; \ell{-}1 \; \ell \; \ell \; \dotsc \; \; \ell,
\end{equation}
which is matchless. Otherwise, 
\begin{equation}\label{eq:leftover}
\{f_\sigma(t+2),\dotsc,f_\sigma(n)\}=\{\img_{t+1},\dotsc,\img_\ell\}.
\end{equation}
\end{lemma}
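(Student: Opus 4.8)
The plan is to prove the statement by induction on the index $i$, tracking what the failure of all four cases at each index forces about the values $f_\sigma(j)$. The key observation is that the four cases are exhaustive enough that, for the \emph{smallest} index $i$ where no case applies, the subexcedant function is heavily constrained. I would first establish the base behaviour at $i=2$, then show inductively that if no $i\in\{2,\dotsc,t\}$ triggers any case, each such $\img_i$ must equal its own argument in a way that forces \eqref{eq:sefPartialset}. Concretely, I would argue that the failure of \caseOne[i] means $f_\sigma(\img_i)\neq\img_i$; combined with the failure of \caseTwo[i], \caseThree[i], and \caseFour[i], this pins down that $\img_i=i$ (the image elements are exactly the first few integers) and that $f_\sigma(j)=\max\{1,j-1\}$ up through $j=t+1$.

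First I would unpack the meaning of each case failure at a fixed index $i$. Failure of \caseOne[i] gives $f_\sigma(\img_i)\neq\img_i$; since $f_\sigma$ is subexcedant we then have $f_\sigma(\img_i)<\img_i$. Failure of \caseTwo[i] combined with this then forces $|f^{-1}_\sigma(1)|\leq 2$, hence (as every $f\in\DSEF_n$ has at least two $1$'s) exactly $f^{-1}_\sigma(1)=\{1,2\}$. This immediately supplies the second condition of \eqCond[i]. I would then use the failure of \caseThree[i] and \caseFour[i]: since these two together cover the possibilities $f_\sigma(\img_{i+1})=\img_{i+1}$ and $f_\sigma(\img_{i+1})<\img_{i+1}$ whenever \eqCond[i] holds, their joint failure means \eqCond[i] itself must fail. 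Given that the second condition of \eqCond[i] already holds, the failure must come from the first or third condition, i.e.\ either $f_\sigma(\img_i)\geq\img_i$ (impossible, already ruled out) or $\img_i=\img_\ell$ or the condition $\{\img_i+1\}\subsetneq f^{-1}_\sigma(\img_i)$ fails. This forces $f^{-1}_\sigma(\img_i)=\{\img_i+1\}$, meaning $\img_i$ has a \emph{unique} preimage, namely $\img_i+1$.

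From these preimage constraints I would assemble the explicit form. Knowing $f^{-1}_\sigma(1)=\{1,2\}$ gives $f_\sigma(1)=f_\sigma(2)=1$, and the relation $f^{-1}_\sigma(\img_i)=\{\img_i+1\}$ for each relevant $i$ forces each successive image value to be hit exactly once by its immediate successor, yielding $f_\sigma(j)=j-1$ for the range of indices covered. An induction on $i$ then shows $\img_i=i$ for all $i\leq t$ and establishes \eqref{eq:sefPartialset}. Reading off the prefix \eqref{eq:sefPartial} from \eqref{eq:sefPartialset} gives $1\,1\,2\,3\dotsc t\,(t{+}1)$ in one-line form truncated appropriately, which matches the matchless pattern. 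The two concluding alternatives are then a case split: if $\ell=t$ the image is exactly $[t]$ and the unused positions $t+2,\dotsc,n$ must all map into $[t]$, and the preimage structure forces them all to equal $t$ (the largest image element), giving \eqref{eq:matchlessfsigma}; whereas if $\ell>t$, the remaining image elements $\img_{t+1},\dotsc,\img_\ell$ have not yet been accounted for, so they must be precisely the values taken on the positions $t+2,\dotsc,n$, giving \eqref{eq:leftover}.

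The main obstacle I anticipate is the careful bookkeeping around \eqCond[i] and the smallest-index convention: I must be scrupulous that ``no case applies at $i$'' is exactly the conjunction of the negations of \caseOne[i] through \caseFour[i], and that the interaction between conditions on $\img_i$ and on $\img_{i+1}$ (cases three and four reach one index ahead) does not create an off-by-one gap in the induction. In particular, the hypothesis explicitly distinguishes the boundary case $t=\ell$ from $t<\ell$ and separately assumes cases \caseOne[t+1] and \caseTwo[t+1] do not hold; I would need to verify that these boundary assumptions are precisely what is required to extend the forced pattern to the index $t+1$ without claiming anything false about $\img_{t+1}$ itself. Establishing that the unique-preimage deduction propagates cleanly, and correctly separating which positions are forced to map to $t$ versus which remain free to realize $\{\img_{t+1},\dotsc,\img_\ell\}$, is where the real care lies.
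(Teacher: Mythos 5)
There is a genuine gap at the center of your plan. The deduction in your second paragraph --- that once \caseOne[i], \caseTwo[i], \caseThree[i], \caseFour[i] all fail at an index $i$, this ``forces $f^{-1}_\sigma(\img_i)=\{\img_i+1\}$'' --- is a non sequitur, and in fact false. The joint failure of \caseThree[i] and \caseFour[i] does imply that \eqCond[i] fails (you argue this part correctly), but the negation of the third condition in \eqref{eq:case34Conditions} is the disjunction ``$f_\sigma(\img_i+1)\neq\img_i$ \emph{or} $|f^{-1}_\sigma(\img_i)|=1$'', and the first disjunct is incompatible with your conclusion rather than implying it; you also silently discard the disjunct $\img_i=\img_\ell$ from your own case analysis. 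A concrete counterexample to your claim, at your own base index $i=2$: take $f_\sigma=1\,1\,3\,3\,2\,2\in\DSEF_6$ (its fixed points $1$ and $3$ are multiple, so it corresponds to the derangement $\sigma=514362$). Then $\supp(f_\sigma)=\{1,2,3\}$, and none of \caseOne[2], \caseTwo[2], \caseThree[2], \caseFour[2] applies: the first two conditions of \eqCond[2] hold, but $f_\sigma(\img_2+1)=f_\sigma(3)=3\neq 2$, so \eqCond[2] fails. Nevertheless $f^{-1}_\sigma(\img_2)=\{5,6\}$, which is neither a singleton nor contains $\img_2+1=3$. Since your third paragraph manufactures the entire prefix \eqref{eq:sefPartialset} out of these unique-preimage relations, the argument collapses at its first step.

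The underlying problem is that your deductions run in the wrong direction, and this is exactly the off-by-one interaction you flag (but do not resolve) in your final paragraph. In the paper's proof the prefix structure \eqref{eq:sefPartialset} is established \emph{first}, by induction on $t$, using only the failures of \caseOne{} and \caseTwo{} at indices $2,\dotsc,t+1$ together with the leftover property \eqref{eq:leftover} carried along as part of the induction hypothesis (it is the leftover property that forces $\img_{k+1}=k+1$ and $f_\sigma(k+2)\in\{k+1,k+2\}$ at the next step). Only once the prefix is in hand does one know that $f_\sigma(\img_i+1)=f_\sigma(i+1)=i=\img_i$, i.e.\ that the first half of the third condition of \eqCond[i] holds; then, and only then, does the failure of \caseThree[i] and \caseFour[i] pin the failure of \eqCond[i] down to $|f^{-1}_\sigma(\img_i)|=1$, which is what excludes the values $2,\dotsc,k+1$ from positions $k+3,\dotsc,n$ and yields \eqref{eq:leftover} at the next level. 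So the unique-preimage relations are a \emph{consequence} of the prefix, not the source of it; a correct write-up of your plan would have to be reorganized into essentially the paper's induction.
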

\begin{proof}
We first note that \eqref{eq:sefPartial} follows immediately from \eqref{eq:sefPartialset} and  $\img_i=i$, for $i\in[t]$ by \eqref{eq:sefPartial}. The main statement follows from considering $t=\ell$ in \eqref{eq:sefPartial}.
We shall use induction on $t$ in order to prove \eqref{eq:sefPartialset}, \eqref{eq:matchlessfsigma}, and \eqref{eq:leftover}.

\medskip

\noindent\textbf{Base case $t=1$}: 
In this case, $\{2,\dotsc,t\}$ is empty.
If $\ell=t=1$, then $f_\sigma=1\,1\,1\,\dotsm\,1\,1$ (which is matchless). 
Otherwise, suppose that the cases \caseOne[t+1] and \caseTwo[t+1] do not hold. 

Since case \caseOne[2] is not fulfilled, then
$f_{\sigma}(\img_2)<\img_2$ so $f_{\sigma}(\img_2)=1$.
Hence, $f_{\sigma}(1)=1$ and 
$f_{\sigma}(2)=1$, otherwise $f_{\sigma}(2)=2$ 
which would violate our assumption.

Since case \caseTwo[2] is not fulfilled, although
$f_{\sigma}(\img_2)<\img_2$, then $|f_{\sigma}^{-1}(1)|<3$.
Thus, $f_{\sigma}^{-1}(1)=\{1,2\}$.
 Consequently, $\img_2=2$, since else $f_\sigma(3)=3$ and \caseOne[2] would be fulfilled. Hence, \eqref{eq:leftover} follows.

\bigskip 
\noindent
\textbf{Induction hypothesis:}
Suppose the statements hold for $t=k$, 
for some $k\geq 1$. We shall prove that they hold for $t=k+1$. 
\medskip

For this purpose suppose that none of the cases (\caseOne,\caseTwo,\caseThree,\caseFour)
holds for $i \in \{2,3,\dotsc,k+1\}$ and either $\ell=t=k+1$ or cases 
\caseOne[k+2] and \caseTwo[k+2] are not satisfied. Then, by the induction hypothesis, $f_\sigma(j)=\max\{1,j-1\},\text{ for }j\in[k+1]$ and $f_\sigma$ starts with $1\,1\,2\,3\,\dotsm\,k{-}1\,k$,
which is matchless. Since $\ell>k$ and the 
two cases (\caseOne[k+1], \caseTwo[k+1]) are not 
fulfilled, (by the induction 
hypothesis) none of the elements in $[k]$ 
belongs to 
$\{f_\sigma(k+2),\dotsc,f_\sigma(\ell)\}$. 
So, $f_\sigma(k+2)\in\{k+1, k+2\}$. We also have 
$\img_i=i$, for $i\in[k]$. We claim that 
$\img_{k+1}=k+1$. Otherwise, 
$\img_{k+1}>k+1$ and then 
$f_\sigma(\img_{k+1})=\img_{k+1}$, which would satisfy case \caseOne[k+1]. 
\medskip

If $\ell=k+1$, then $f_\sigma(k+2)=\img_{k+1}=k+1$ and
\[
f_\sigma=1\,1\,2\,3\,\dotsm\,k{-}1\,k\,k{+}1\,k{+}1\,\dotsm\,k{+}1,
\]
 indeed \eqref{eq:sefPartialset} and \eqref{eq:matchlessfsigma} holds.
\medskip

Else, $f_\sigma(k+2)=k+1$ and $f_\sigma$ starts with 
$1\,1\,2\,3\,\dotsm\,k{-}1\,k\,k{+}1$ since cases 
\caseOne[k+2] and \caseTwo[k+2] are not fulfilled. Thus, \eqref{eq:sefPartialset} holds. And since 
neither of the two cases (\caseThree[i], 
\caseFour[i]) holds for $i\in[k+1]$, at least 
one of the conditions in 
\eqCond[i] is not 
fulfilled. However, 
$f_\sigma(\img_i)<\img_i<\img_\ell$ (since $\ell>k+1$) and 
$f_\sigma(\img_i+1)=\img_i$, for all 
$i\in[k+1]$. Moreover, 
$f^{-1}_\sigma(1)=\{1,2\}$. Thus, 
$|f^{-1}_\sigma(\img_i)|=1$, for all 
$i\in\{2,\dotsc,k+1\}$. Hence, none of the elements in $[k+1]$ 
belongs to 
$\{f_\sigma(k+3),\dotsc,f_\sigma(\ell)\}$, which proves \eqref{eq:leftover}.
\end{proof}

\begin{remark}\label{rem:casefour}
If either \caseThree[i] or \caseFour[i] holds, then $\ell>i$, and both
\eqref{eq:sefPartialset} and 
\eqref{eq:leftover} hold for $t=i-1$. If, in 
particular, case \caseFour[i] is fulfilled, then 
$f_{\sigma}(\img_{i+1})=\img_i$ since 
$\img_{i+1}>f_{\sigma}(\img_{i+1})\in\{\img_{i},\dotsc,\img_{\ell}\}$.
\end{remark}

\begin{lemma}[Correct range]\label{lem:correctRange}
If $f_{\sigma}\in \DSEF_n$, then $f_\tau \coloneqq\fretherD(f_{\sigma})\in \DSEF_n$.
\end{lemma}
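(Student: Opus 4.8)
The plan is to invoke \cref{prop:derangmentSEF}, which characterizes the image $\DSEF_n$ of derangements: to show $f_\tau = \fretherD(f_\sigma) \in \DSEF_n$ it suffices to verify that $f_\tau$ is a bona fide subexcedant function and that every fixed point of $f_\tau$ is multiple. The matchless case is immediate since there $f_\tau = f_\sigma$. In every other case the definitions of $\fixMap$ and $\unfixMap$ show that $f_\tau$ agrees with $f_\sigma$ except at a single position $p$, namely $p = \img_i$ in Cases $\caseOne,\caseTwo$ and $p = \img_{i+1}$ in Cases $\caseThree,\caseFour$. This single-entry modification is exactly what makes the case analysis tractable.

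First I would dispose of the subexcedant condition. The four modifications replace the value at position $p$ by one of $\img_{i-1}$, $\img_i$, or $\img_{i+1}$; since $\img_{i-1} < \img_i \le \img_{i+1}$ and $p \in \{\img_i,\img_{i+1}\}$, the new value is at most $p$, so $1 \le f_\tau(p) \le p$ and $f_\tau \in \SEF_n$.

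The substance is the fixed-point condition, and I would split it into the two \emph{unfix} cases ($\caseOne,\caseThree$) and the two \emph{fix} cases ($\caseTwo,\caseFour$). In the unfix cases position $p$ satisfies $f_\sigma(p) = p$, and $f_\tau$ replaces this by a strictly smaller value, so $p$ ceases to be a fixed point while no other entry changes; hence the fixed points of $f_\tau$ are precisely those of $f_\sigma$ other than $p$. Each surviving fixed point $q$ had a witness $j > q$ with $f_\sigma(j) = q$, and this witness cannot be $p$ (since $f_\sigma(p) = p \ne q$), so it is untouched and $q$ stays multiple. The unfix cases therefore cause no difficulty.

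The fix cases are the main obstacle, because there we \emph{create} a fixed point at $p$ and simultaneously delete the single occurrence of the old value $v = f_\sigma(p) < p$, which could in principle orphan a pre-existing fixed point equal to $v$. Multiplicity of the new fixed point $p$ is easy: since $v < p$, position $p$ is not its own preimage, so the image element $p \in \supp(f_\sigma)$ has a preimage $j > p$ which survives as a witness in $f_\tau$; moreover $p$ is the only fixed point created. The delicate point is that deleting the preimage $p$ of $v$ must not destroy $v$'s multiplicity, and here I would invoke the case hypotheses. In Case $\caseTwo$ we have $i = 2$ by \cref{rem:case2}, so $v = f_\sigma(\img_2) = 1$ (the only image element below $\img_2$), and the assumption $|f^{-1}_\sigma(1)| \ge 3$ leaves at least two preimages of $1$ after the deletion, keeping the fixed point $1$ multiple. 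In Case $\caseFour$, \cref{rem:casefour} gives $v = \img_i$, while the condition \eqCond[i] forces $f_\sigma(\img_i) < \img_i$, so $v = \img_i$ is not a fixed point of $f_\sigma$ at all and no multiplicity can be harmed. In both fix cases no multiple fixed point is orphaned, so all fixed points of $f_\tau$ are multiple and \cref{prop:derangmentSEF} yields $f_\tau \in \DSEF_n$.
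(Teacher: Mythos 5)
Your proof is correct and follows essentially the same route as the paper: reduce via \cref{prop:derangmentSEF} to checking that all fixed points of $f_\tau$ are multiple, then handle the unfix cases (\caseOne, \caseThree) by noting no new fixed point appears and no witness is lost, and the fix cases (\caseTwo, \caseFour) using \cref{rem:case2} with $|f^{-1}_\sigma(1)|\geq 3$ and \cref{rem:casefour} with $f_\sigma(\img_i)<\img_i$, respectively. Your explicit verification that $f_\tau$ remains subexcedant is a small addition the paper leaves implicit, but otherwise the two arguments coincide.
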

\begin{proof}
	If $f_\sigma$ is matchless, then we are done. 
	Suppose that $f_{\sigma}\in \DSEF^*_n$ and $i\geq 2$ satisfies one the cases in (\caseOne[i], \caseTwo[i], \caseThree[i], \caseFour[i]).
By \cref{prop:derangmentSEF}, it suffices to show that all fixed-points of $f_\tau$ are multiple.
\medskip

In the case of either \caseOne[i] or \caseThree[i], there will be no new fixed point created in $f_\tau$ since $f_\tau=\unfixMap_r(f_\sigma)$, for $r\in\{i,i+1\}$. So all the fixed points of $f_\sigma$ remain multiple in $f_\tau$ too except for $\img_r$, which is not fixed in $f_\tau$.
\medskip

If the case \caseTwo[i] fulfilled, then $i=2$ by \cref{rem:case2}, and $f_\sigma(\img_2)=1$. Moreover,
$f_{\tau}(\img_2)=\img_{2}$ and there is some $j>\img_2$ such 
that $f_{\tau}(j)=f_{\sigma}(j)=\img_{2}$. That is, 
$\img_2$ is a multiple fixed point in $f_{\tau}$. And so is $1$ since $|f^{-1}_\sigma(1)|\geq 3$ implies $|f^{-1}_\tau(1)|\geq 2$.
\medskip

If the case \caseFour[i] fulfilled, then $f_{\sigma}(\img_{i+1})=\img_i$ by \cref{rem:casefour},
and there is some $j>\img_{i+1}$ such that $f_{\tau}(j)=f_{\sigma}(j)=\img_{i+1}$.
Consequently, $f_{\tau}(\img_{i+1})=\img_{i+1}$ is a multiple fixed point in $f_{\tau}$ while $\img_i$ is not a fixed point in both $f_\sigma$ and $f_\tau$.
\end{proof}

\begin{lemma}[Image-set preserving]\label{lem:imPreserving}
For $f_{\tau}=\fretherD(f_{\sigma})$, we have 
\begin{equation}\label{eq:imPreserving}
\supp(f_{\sigma}) = \supp(f_{\tau}) \text{ and }
\EXCv(\sigma) = \EXCv(\tau). 
\end{equation}
\end{lemma}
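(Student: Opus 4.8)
The plan is to exploit that, away from the matchless case, $\fretherD$ alters $f_\sigma$ at exactly one position. Each of $\fixMap_r$ and $\unfixMap_r$ changes only the value at position $\img_r$, so passing from $f_\sigma$ to $f_\tau$ deletes a single occurrence of one value and inserts a single occurrence of another. Thus $\supp(f_\sigma)=\supp(f_\tau)$ reduces to two checks: the inserted value already lies in $\supp(f_\sigma)$, and the deleted value still occurs somewhere in $f_\tau$. (When $f_\sigma$ is matchless, $f_\tau=f_\sigma$ and there is nothing to prove.)

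The first check is immediate. In \caseOne[i] the inserted value is $\img_{i-1}$ and in \caseThree[i] it is $\img_i$; in \caseTwo[i] it is $\img_i$ and in \caseFour[i] it is $\img_{i+1}$. All four lie in $\supp(f_\sigma)=\{\img_1,\dotsc,\img_\ell\}$, and every unmodified position keeps a value in $\supp(f_\sigma)$, so $\supp(f_\tau)\subseteq\supp(f_\sigma)$.

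For the reverse inclusion I would argue, case by case, that the deleted value keeps a preimage under $f_\tau$. In \caseOne[i] the deleted value is the fixed point $\img_i=f_\sigma(\img_i)$; since $f_\sigma\in\DSEF_n$, \cref{prop:derangmentSEF} makes this fixed point multiple, so some $j>\img_i$ satisfies $f_\sigma(j)=\img_i$, and position $j$ is untouched. The identical argument, applied to the fixed point $\img_{i+1}$, settles \caseThree[i]. In \caseTwo[i], \cref{rem:case2} forces $i=2$ and $f_\sigma(\img_2)=1$, so the deleted value is $1$; the hypothesis $|f^{-1}_\sigma(1)|\geq 3$ leaves at least two preimages of $1$ in $f_\tau$. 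Finally, in \caseFour[i], \cref{rem:casefour} identifies the deleted value as $f_\sigma(\img_{i+1})=\img_i$, while the third clause of \eqCond[i] gives $|f^{-1}_\sigma(\img_i)|\geq 2$; deleting the single preimage at $\img_{i+1}$ therefore leaves another preimage of $\img_i$ intact. In each case the deleted value survives, so $\supp(f_\sigma)\subseteq\supp(f_\tau)$, and the images coincide.

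The excedance statement then follows at once: by \cref{prop:excAsIm} we have $\EXCv(\sigma)=[n]\setminus\supp(f_\sigma)$ and $\EXCv(\tau)=[n]\setminus\supp(f_\tau)$, and these sets agree as soon as the images do. I expect \caseFour[i] to be the only delicate point, as it is where I must combine \cref{rem:casefour} (to pin the deleted value to $\img_i$) with the multiplicity clause of \eqCond[i] (to keep $\img_i$ in the image); the other three cases reduce cleanly to the multiple-fixed-point characterization of derangement subexcedant functions in \cref{prop:derangmentSEF}.
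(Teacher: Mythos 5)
Your proposal is correct and follows essentially the same route as the paper's proof: since $\fretherD$ alters only one position, the support equality reduces to showing the deleted value survives, which you settle case by case using the multiple-fixed-point property (\cref{prop:derangmentSEF}) for \caseOne/\caseThree, \cref{rem:case2} with $|f_\sigma^{-1}(1)|\geq 3$ for \caseTwo, and \cref{rem:casefour} with the multiplicity clause of \eqCond[i] for \caseFour, before invoking \cref{prop:excAsIm} for the excedance statement. The only cosmetic difference is that you spell out the forward inclusion $\supp(f_\tau)\subseteq\supp(f_\sigma)$, which the paper dismisses as immediate from the definition.
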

\begin{proof}
	First note that $\supp(f_{\tau})\subseteq \supp(f_{\sigma})$, which clearly follows from 
	the definition of $\fretherD$. Now suppose that one of the cases in (\caseOne[i], \caseTwo[i], \caseThree[i], \caseFour[i]) 
	is satisfied for $i\geq 2$. Recall that the map $\fretherD$ first removes an element in position $\img_r$, 
	for $r\in\{i, i+1\}$, in $f_\sigma$ and then insert another element on the same position to 
	obtain $f_\tau$. So, it is enough to show that the 
	removed element is in $\supp(f_\tau)$ for $\supp(f_{\sigma}) = \supp(f_{\tau})$ to hold.
\medskip

In the case of \caseOne[i] or \caseThree[i], $f_\tau=\unfixMap_r(f_\sigma)$ and there is some $j>\img_r$ such 
that $\img_r=f_{\sigma}(j)=f_{\tau}(j)$ since $\img_r$ is a multiple fixed point in $f_\sigma$ in these cases. So $\img_r\in \supp(f_{\tau})$.
\medskip

If \caseTwo[i] holds, then
$f_{\sigma}(\img_i)=1$, since $i=2$ (by \cref{rem:case2}). Moreover, $f_\tau(\img_i)=\img_i$. However, $|f^{-1}_\sigma(1)|\geq 3$. So, $|f^{-1}_\tau(1)|\geq 2$ and then $1\in \supp(f_{\tau})$. 
\medskip

Finally, suppose case \caseFour[i] is fulfilled. Then by \cref{rem:casefour}, $f_{\sigma}(\img_{i+1})=\img_i$. We can now conclude that $\img_i\in \supp(f_{\tau})$, since $|f^{-1}_\sigma(\img_i)|\geq 2$. 
\medskip

Therefore, the first equality in \eqref{eq:imPreserving} is proved, while the second follows from \cref{prop:excAsIm}. 
\end{proof}

\begin{lemma}\label{lem:rlmPreserving}
For $f_{\tau} = \fretherD(f_{\sigma})$ we have 
\begin{equation}\label{eq:ritoleft}
 \RLMv(f_{\sigma}) = \RLMv(f_{\tau}) \text{ and }
 \RLMv(\sigma) = \RLMv(\tau).
\end{equation}
\end{lemma}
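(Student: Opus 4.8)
The plan is to deduce the second equality from the first: by \cref{prop:rlmForSEF}(b) we have $\RLMv(\sigma)=\RLMv(f_\sigma)$ and $\RLMv(\tau)=\RLMv(f_\tau)$, so it suffices to prove $\RLMv(f_\sigma)=\RLMv(f_\tau)$. The crucial simplification is that, in each of the four cases, $f_\tau$ differs from $f_\sigma$ in a \emph{single} position. Writing $\supp(f_\sigma)=\{\img_1<\dots<\img_\ell\}$, one reads off from the definitions of $\fixMap$ and $\unfixMap$ that the modified position is $p=\img_r$ for some $r\in\{i,i+1\}$ (with $r=i$ in $\caseOne,\caseTwo$ and $r=i+1$ in $\caseThree,\caseFour$), and that $f(p)$ merely toggles between the two consecutive image values $v_-\coloneqq\img_{r-1}$ and $v_+\coloneqq\img_r=p$. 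Moreover $v_+$ is a fixed point in exactly one of $f_\sigma,f_\tau$, and since both lie in $\DSEF_n$ by \cref{lem:correctRange}, \cref{prop:derangmentSEF} shows this fixed point is \emph{multiple}.

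I would first record the elementary characterization: for a subexcedant $g$, a value $v$ lies in $\RLMv(g)$ if and only if no value strictly smaller than $v$ occurs to the right of the rightmost occurrence of $v$ in $g$. With this, the initial step is to show that the only values whose membership in $\RLMv$ can differ between $f_\sigma$ and $f_\tau$ are $v_-$ and $v_+$. Indeed, fix $w\in\supp(f_\sigma)=\supp(f_\tau)$ (the equality is \cref{lem:imPreserving}) with $w\notin\{v_-,v_+\}$; then $p$ is an occurrence of $w$ in neither function, so the rightmost occurrence of $w$ is a common position $q\neq p$, and the two functions agree to the right of $q$ except possibly at $p$. The status of $w$ could change only if the toggled value at $p$ crosses the threshold $w$, that is, only if $v_-<w\le v_+$; but $v_-=\img_{r-1}$ and $v_+=\img_r$ are consecutive in $\supp$, so the only image value in $(v_-,v_+]$ is $v_+$, contradicting $w\neq v_+$.

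It then remains to verify that $v_+$ and $v_-$ each retain their $\RLMv$-status. For $v_+=p$ I would argue that its rightmost occurrence sits at a common position $J>p$ in both functions: in whichever of $f_\sigma,f_\tau$ has $f(p)=v_+$ this second occurrence is furnished by multiplicity of the fixed point $v_+$, and it persists in the partner function because it lies at a position $\neq p$; deleting or inserting the value $v_+$ at $p<J$ does not move the rightmost occurrence. Since the two functions agree beyond $J$, the value $v_+$ has the same status in both.

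For $v_-$ the main effort---and the step I expect to be the genuine obstacle---is controlling the suffix to the right of $p$. When $v_-=1$ (which occurs only for $\caseOne[2]$ and $\caseTwo[2]$) it is the global minimum and hence automatically a right-to-left minimum value of any function. Otherwise $v_->1$, and I would invoke the prefix/leftover structure of \cref{lem:fretherDIsWellDefined}: via \cref{rem:casefour} in cases $\caseThree,\caseFour$, and via the lemma applied with $t=i-2$ in cases $\caseOne,\caseTwo$ with $i\ge 3$. This shows that, for a suitable threshold $s\le p$, the positions $\ge s$ carry exactly the value set $\{\img_{r-1},\dots,\img_\ell\}$, none of which occurs among the smaller prefix entries $1,1,2,\dots$. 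Consequently $v_-=\img_{r-1}$ is the minimum value occurring from position $s$ onward, and it does occur there in both functions---in the $\unfixMap$ cases because $f_\tau(p)=v_-$ with $p\ge s$, and in the $\fixMap$ cases because $\supp$ is preserved while $v_-$ cannot reappear in the prefix. Hence the rightmost occurrence of $v_-$ has no smaller value to its right in either function, so $v_-\in\RLMv(f_\sigma)\cap\RLMv(f_\tau)$. Combining the three steps gives $\RLMv(f_\sigma)=\RLMv(f_\tau)$, the case bookkeeping needed to pin down $s$ and the exact suffix content being where most of the care lies.
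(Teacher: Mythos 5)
Your proof is correct, and it takes a genuinely different route from the paper's. The paper proves the lemma by a four-case analysis mirroring the definition of $\fretherD$: in each of \caseOne[i]--\caseFour[i] it identifies the affected values ad hoc and argues separately (for instance, in \caseThree[i] the claim $\img_i\in\RLMv(f_\sigma)$ is proved by contradiction with the minimality of $i$, by producing a case \caseFour[r] with $r<i$). You instead isolate a uniform reduction that the paper never states: since $f_\tau$ differs from $f_\sigma$ only at the position $p=\img_r$, where the value toggles between the \emph{consecutive} support elements $v_-=\img_{r-1}$ and $v_+=\img_r$, and since every right-to-left minimum value lies in the common support (\cref{lem:imPreserving}), only $v_-$ and $v_+$ can change status; then $v_+$ is handled once and for all by multiplicity of fixed points (\cref{prop:derangmentSEF} together with \cref{lem:correctRange}), and $v_-$ once and for all by the prefix/leftover structure of \cref{lem:fretherDIsWellDefined} and \cref{rem:casefour}. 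What your organization buys is robustness: your treatment of $v_-$ is sound in a subcase where the paper's argument as written breaks down. In case \caseOne[i] with $i\geq3$ the paper asserts that $\img_i$ is the \emph{leftmost} occurrence of $\img_{i-1}$ in $f_\tau$, hence that $\img_{i-1}$ occurs at some $k>\img_i$ in both functions; this can fail, e.g.\ for $f_\sigma=11244\in\DSEF_5$ (case \caseOne[3], $f_\tau=11224$), where $\img_2=2$ occurs at position $3<\img_3=4$ and nowhere after position $4$ --- yet your ``$v_-$ is the minimum value of the suffix, and it occurs there'' argument covers this example unchanged. Conversely, the paper's per-case proof is shorter case by case and stays close to the case distinctions that are reused in \cref{lem:involution}. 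Two small points you should make explicit: the matchless case is trivial ($f_\tau=f_\sigma$) but is part of the statement; and in the $\fixMap$ cases the suffix of $f_\tau$ carries $\{\img_{r-1},\dotsc,\img_\ell\}$ only up to replacing one occurrence of $v_-$ by $v_+$ --- harmless, since all you need is that every suffix value is $\geq v_-$ and that $v_-$ still occurs in the suffix, which you do argue.
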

\begin{proof}
Let $f_\sigma\in \DSEF_n$. 
If $f_\sigma$ is matchless, then $f_\tau=f_\sigma$ and  $\RLMv(f_\sigma)=\RLMv(f_\tau)$.
Suppose $f_\sigma$ is non-matchless, so that one of the four cases applies.

\begin{enumerate}[leftmargin=2cm]
 
	\item[\textbf{Case \caseOne[i]:}] 
	Then $f_\sigma(\img_i)=\img_i$ and 
	$f_\tau(\img_i)=\img_{i-1}$. 
	Moreover, there is some $j>\img_i$ such that $f_\tau(j)=f_\sigma(j)=\img_i$. 
	The property of $\img_i$ being a right-to-left minimum in $f_\sigma$
	as well as $f_\tau$ is determined either at the position $j$
	or to the right of $j$. Hence, replacing $\img_{i}$ by $\img_{i-1}$ at position
	$\img_i$, preserves $\img_i$
	being (or not) a right-to-left minimum.
	
	\begin{itemize}
		\item If $i\geq 3$, then $\img_i$ is the leftmost occurrence of $\img_{i-1}$ in $f_\tau$,
		since $i$ is the smallest such that $f_\sigma(\img_{i})=\img_i$.
		Since $\supp(f_\sigma)=\supp(f_\tau)$, there is some $k>\img_i$
		such that $f_\tau(k)=f_\sigma(k)=\img_{i-1}$.
		So, $\RLMv(f_\sigma)=\RLMv(f_\tau)$.
		
		\item If $i=2$, then $\img_{i-1}=\img_1=1\in\RLMv(f_\tau)$. Since $1\in \RLMv(f_\sigma)$, then $\RLMv(f_\sigma)=\RLMv(f_\tau)$.
	\end{itemize}
	
	\item[\textbf{Case \caseTwo[i]:}]
	Then $i=2$ and $f_\sigma(\img_2)=\img_1=1$
	and $f_\tau(\img_2)=\img_2$. 
	Moreover, there is some $k>\img_2$
	such that $f_\tau(k)=f_\sigma(k)=\img_2$.
	Since the right-to-left minimum property of $\img_2$
	is determined at or to the right of the $k^\thsup$
	position and $1\in \RLMv(f_\tau)$, we have $\RLMv(f_\sigma)=\RLMv(f_\tau)$.

	\item[\textbf{Case \caseThree[i]:}]
	Then $f_\sigma(\img_{i+1})=\img_{i+1}$
	and $f_\tau(\img_{i+1})=\img_{i}$.
	We claim that $\img_i\in\RLMv(f_\sigma)$.
	Otherwise, there are $r<i$ and $s>j$,
	such that $f_\sigma(s)=\img_r$, where $j$ is the rightmost position of $\img_i$ in $f_\sigma$.
	But now, $|f_\sigma^{-1}(\img_r)| \geq 2$ (by \cref{lem:fretherDIsWellDefined})
	and so \eqCond[r] is fulfilled and case \caseFour[r] holds.
	This contradicts the choice of $i$ being minimal, and the claim follows.
	
	Since $\img_{i+1}$ being a right-to-left minimum is determined at some other
	position $k>\img_{i+1}$ where $f_\tau(k)=f_\sigma(k)=\img_{i+1}$,
	we can conclude that $\RLMv(f_\sigma)=\RLMv(f_\tau)$.
	
	\item[\textbf{Case \caseFour[i]:}]
	There is some $i\geq 2$ such that $f_\sigma(\img_{i+1})=\img_i<\img_{i+1}$ (by \cref{rem:casefour})
	and $f_\tau(\img_{i+1})=\img_{i+1}$.
	We also know that $\img_i\in\RLMv(f_\sigma)$. 
	Since $\img_{i+1}>\img_i$, we have $\img_i\in\RLMv(f_\tau)$.
	
	Now, $\img_{i+1}$ being a right-to-left minimum
	is determined at some position $k>\img_{i+1}$ where 
	$f_\tau(k)=f_\sigma(k)=\img_{i+1}$.
	Hence, we can conclude that $\RLMv(f_\sigma)=\RLMv(f_\tau)$.
	\end{enumerate}
The second equality in \eqref{eq:ritoleft} follows  from \cref{prop:rlmForSEF}.
\end{proof}

\begin{lemma}\label{lem:signReversing}
If $f_{\sigma}\in \DSEF^*_n$, then 
\[
 \aexc(\fretherD(f_{\sigma})) \in \left\{ \aexc(f_{\sigma})-1,\; \aexc(f_{\sigma})+1 \right\}.
\]
Moreover, if $f_\tau = \fretherD(f_{\sigma})$,
then $\sigma$ and $\tau$ have different parity.
\end{lemma}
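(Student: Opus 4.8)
The lemma has two parts. The first asserts that applying $\fretherD$ changes the number of strict anti-excedances by exactly $\pm 1$; the second concludes that $\sigma$ and $\tau$ have opposite parity. Granting the first part, the second is immediate from \cref{prop:evenSEF}: a change of $\aexc$ by an odd number flips its parity, and since $\sigma$ is even iff $\aexc(f_\sigma)$ is even, a parity flip of $\aexc$ forces $\sigma$ and $\tau$ to have different parity. So the whole weight of the argument sits in the first statement, and the plan is to prove it case by case along the four cases defining $\fretherD$.

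\textbf{The core reduction.} In every case the map $\fretherD$ modifies $f_\sigma$ at a single position $\img_r$ (with $r \in \{i, i+1\}$), leaving all other values fixed. Hence $\aexc$ can only change according to whether that single position switches between being a strict anti-excedance and not. The plan is to check, in each case, the value of $f_\sigma$ and $f_\tau$ at the modified position and compare each against the index at that position. Concretely: in \caseOne[i] we change $f_\sigma(\img_i) = \img_i$ (a fixed point, \emph{not} a strict anti-excedance) to $f_\tau(\img_i) = \img_{i-1} < \img_i$ (now a strict anti-excedance), so $\aexc$ increases by $1$. In \caseTwo[i] (where $i=2$ by \cref{rem:case2}) we change $f_\sigma(\img_2) = 1 < \img_2$ (a strict anti-excedance) to $f_\tau(\img_2) = \img_2$ (a fixed point), so $\aexc$ decreases by $1$. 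In \caseThree[i] we modify position $\img_{i+1}$, turning the fixed point $f_\sigma(\img_{i+1}) = \img_{i+1}$ into $f_\tau(\img_{i+1}) = \img_i < \img_{i+1}$, increasing $\aexc$ by $1$. In \caseFour[i], using \cref{rem:casefour} to get $f_\sigma(\img_{i+1}) = \img_i < \img_{i+1}$ (a strict anti-excedance), we change it to $f_\tau(\img_{i+1}) = \img_{i+1}$ (a fixed point), decreasing $\aexc$ by $1$.

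\textbf{Organizing the write-up.} The cleanest presentation is to observe first that $\fretherD$ alters exactly one position, so $|\aexc(f_\tau) - \aexc(f_\sigma)| \le 1$, and then to note that in each of the four cases the modified position genuinely flips its anti-excedance status (never staying the same), so the change is exactly $\pm 1$ — never $0$. The key data needed is that $\fixMap$ always installs a fixed point while $\unfixMap$ always installs a strict anti-excedance ($\img_{i-1} < \img_i$, which uses $\img_{i-1} < \img_i$ from the ordering of the image set), and that before the modification the position had the complementary status. For the $\fixMap$ cases one must confirm the position was a \emph{strict} anti-excedance beforehand (value strictly less than index), which holds because $f_\sigma(\img_i) = 1 < \img_i$ in \caseTwo[i] and $f_\sigma(\img_{i+1}) = \img_i < \img_{i+1}$ in \caseFour[i].

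\textbf{Main obstacle.} The only subtlety, and the step I would be most careful about, is verifying that the modification never leaves the anti-excedance count unchanged — i.e.\ that we always move strictly between "fixed point'' and "strict anti-excedance'' and never, say, between two strict anti-excedances or between an excedance-type value and a fixed point. Since $f_\sigma$ is subexcedant we have $f_\sigma(j) \le j$ always, so no position is ever a strict excedance; the only two statuses available are "fixed point'' ($f(j)=j$) and "strict anti-excedance'' ($f(j)<j$). Because $\unfixMap$ produces $\img_{i-1} < \img_i$ and $\fixMap$ produces the fixed value $\img_i$ (resp.\ $\img_{i+1}$), and the prior value was complementary in each case, the status strictly toggles. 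This binary nature of the status for subexcedant functions is exactly what guarantees the change is $\pm 1$ rather than $0$, and I would state it explicitly at the outset to make the four case checks short.
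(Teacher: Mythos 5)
Your proof is correct and follows essentially the same route as the paper: the paper's proof likewise observes that $\unfixMap_i$ and $\fixMap_i$ (as applied in the respective cases) increase and decrease $\aexc$ by exactly one, and then invokes \cref{prop:evenSEF} for the parity statement. Your write-up merely makes explicit the case-by-case verification and the binary fixed-point/strict-anti-excedance dichotomy that the paper leaves implicit.
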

\begin{proof}
The first statement follows from the fact that $\fixMap_i$
and $\unfixMap_i$ decreases and increases, respectively, 
the number of strict anti-excedances by one.
The second statement follows from the first by \cref{prop:evenSEF}.
\end{proof}

\begin{lemma}\label{lem:involution}
	The map $\fretherD:\DSEF_n\longrightarrow \DSEF_n$
	is an involution.
\end{lemma}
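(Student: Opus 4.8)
The plan is to verify directly that $\fretherD\circ\fretherD=\id$ on $\DSEF_n$. For matchless $f_\sigma$ this is immediate, since $\fretherD$ fixes it. For $f_\sigma\in\DSEF_n^*$ the guiding observation is that $\fretherD$ acts by toggling the fixed-point status of a single image value $\img_r$ (with $r=i$ in Cases \caseOne,\caseTwo\ and $r=i+1$ in Cases \caseThree,\caseFour) through the mutually inverse local maps $\fixMap_r$ and $\unfixMap_r$. Thus it suffices to show that whenever $\fretherD$ sends $f_\sigma$ to $f_\tau$ by applying one case, the map $\fretherD$ applied to $f_\tau$ re-selects the \emph{same} image value and applies the opposite toggle, returning $f_\sigma$.

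First I would record which case inverts which. Tracking both the toggled value and its old and new images, the cases pair up as follows: \caseOne[2] and \caseTwo[2] are mutually inverse, both acting on $\img_2$ and interchanging the values $1$ and $\img_2$; \caseThree[i] and \caseFour[i] are mutually inverse, both acting on $\img_{i+1}$ and interchanging $\img_i$ and $\img_{i+1}$; and \caseOne[i] for $i\geq 3$ is inverted by \caseFour[i-1], both acting on $\img_i=\img_{(i-1)+1}$ and interchanging $\img_{i-1}$ and $\img_i$. For each transition I would (a) note that $\supp(f_\sigma)$ is preserved (\cref{lem:imPreserving}) and that $f_\sigma^{-1}(1)$ is unchanged in the $i\geq 3$ and \caseThree/\caseFour\ transitions, since there the modified position neither carries nor receives the value $1$; (b) check that the defining inequalities, and where relevant the three clauses of \eqCond, genuinely hold for $f_\tau$; and (c) conclude that the opposite toggle returns $f_\sigma$, which is automatic once (b) holds because $\fixMap_r$ and $\unfixMap_r$ invert each other. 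The transitions \caseOne[2]$\leftrightarrow$\caseTwo[2] are checked directly and explain why \caseTwo\ requires $|f_\sigma^{-1}(1)|\geq 3$ (by \cref{rem:case2} it can only occur at $i=2$): unfixing $\img_2$ raises $|f^{-1}(1)|$ above $3$, and fixing it lowers it back.

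The heart of the argument, and the main obstacle, is step (b): showing that the minimal triggering index is selected consistently for $f_\tau$ and that \eqCond\ transfers correctly. The genuinely subtle point is that \caseFour[i] does \emph{not} always invert to \caseThree[i]. After $\fixMap_{i+1}$ removes $\img_{i+1}$ as a preimage of $\img_i$, the third clause of \eqCond[i], namely $\{\img_i+1\}\subsetneq f_\tau^{-1}(\img_i)$, survives for $f_\tau$ precisely when $\img_{i+1}\neq\img_i+1$ and $|f_\sigma^{-1}(\img_i)|\geq 3$; if it fails, then $\img_{i+1}$ has become an isolated fixed point of $f_\tau$ and \caseOne[i+1] triggers at $i+1$ instead. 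Both outcomes apply $\unfixMap_{i+1}$ and send $\img_{i+1}\mapsto\img_i$, so either way $f_\tau$ is carried back to $f_\sigma$. Dually, \caseOne[i] with $i\geq 3$ is reached from exactly these \caseFour[i-1] configurations; here I would invoke \cref{lem:fretherDIsWellDefined} (with $t=i-2$) and \cref{rem:casefour} to pin down the matchless prefix, deduce $\img_{i-1}=i-1$ and $f_\sigma^{-1}(1)=\{1,2\}$, and then reduce the verification of the third \eqCond[i-1]-clause for $f_\tau$ to the two possibilities $f_\sigma(i)\in\{i-1,i\}$, each of which supplies the required second preimage of $\img_{i-1}$. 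The remaining minimality checks are routine: since $f_\tau$ differs from $f_\sigma$ only at one position lying strictly to the right of every position and preimage governing the smaller indices, no case can trigger below the target index, while the boundary index is handled by the explicit clause analysis above. Assembling these verifications across all cases yields $\fretherD(\fretherD(f_\sigma))=f_\sigma$.
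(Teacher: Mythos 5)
Your proposal is correct and follows essentially the same route as the paper's proof: a case-by-case verification that \caseTwo[2] inverts \caseOne[2], that \caseFour[i-1] inverts \caseOne[i] for $i\geq 3$ (using \cref{lem:fretherDIsWellDefined} with $t=i-2$ and \cref{rem:casefour}, exactly as the paper does), that \caseThree[i] maps to \caseFour[i], and that \caseFour[i] returns to \caseThree[i] or to \caseOne[i+1] according to whether \eqCond[i] survives for $f_\tau$. Your characterization of when the third clause of \eqCond[i] persists after applying $\fixMap_{i+1}$ (namely $\img_{i+1}\neq \img_i+1$ and $|f_\sigma^{-1}(\img_i)|\geq 3$) coincides with the paper's two subcases, so the attempt matches the published argument in both structure and substance.
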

\begin{proof}
	Let $f_\sigma\in \SEF_n$ with $ \supp(f_{\sigma})=\{ \img_1, \img_2, \img_3, \dotsc, \img_{\ell} \}$ and set
	$f_\tau=\fretherD(f_\sigma)$. For matchless $f_\sigma$, there is nothing to show.
Now assume that one of (\caseOne[i], \caseTwo[i], \caseThree[i], \caseFour[i]) holds for $f_\sigma$ 
and one of (\caseOne[i'], \caseTwo[i'], \caseThree[i'], \caseFour[i']) holds $f_\tau$, for $i,i'\in\{2, \dotsc, l\}$.
	
\begin{enumerate}[leftmargin=1.6cm]
 	\item[\textbf{Case \caseOne[i]:}] 
 	  We have $f_\sigma(\img_i)=\img_i$, 
 	  and $f_\tau=\unfixMap_i(f_\sigma)$.
		
		\begin{itemize}[leftmargin=-1cm]
			\item
			If $\vert f_\tau^{-1}(1)\vert \geq 3$, then $i=2$.
			
			\medskip  
			Suppose $i\geq 3$. Then $|f_\sigma^{-1}(1)|<3$ since otherwise $f_\sigma$ would be in case \caseTwo[j] for some $j < i$.
			Thus,  $|f_\sigma^{-1}(1)| < |f_\tau^{-1}(1)|$ and then there is some $r\in [n]$ such that $f_\tau(r) = 1\neq f_\sigma(r)$. However, $r=\img_i$ since $f_\tau$ and $f_\sigma$ differs
			only on position $\img_i$. So, $f_\tau(\img_i)=1=\img_{i-1}$  and this only happens if $i=2$.
			
			\medskip 
			Hence, $i=2$ and  $\img_2\in f_\tau^{-1}(1)\setminus f_\sigma^{-1}(1)$. There is now some 
			$h>\img_2$ such that 
			$f_\tau(h)=f_\sigma(h)=
			\img_2$ since 
			$f_\sigma\in\DSEF_n$. 
			So, $f_\tau$ satisfies 
			the conditions for 
			case \caseTwo[i'] with $i'=i$. 
			It follows that $\fretherD(f_\tau)=f_\sigma$.

			\medskip
			
			\item If $\vert f_\tau^{-1}(1)\vert = 2$, then $f_\tau^{-1}(1)=f_\sigma^{-1}(1)$. Because  
			we always have $f_\sigma^{-1}(1)\subseteq f_\tau^{-1}(1)$ in 
			\caseOne[i] and \caseThree[i],
			and since $|f_\sigma^{-1}(1)| \geq 2$, we must have equality.
			Moreover, $i \geq 3$ since otherwise $f_\tau(\img_2)=1$ and then $|f_\tau^{-1}(1)|>|f_\sigma^{-1}(1)|$.
			
			\medskip
			\noindent
			By applying \cref{lem:fretherDIsWellDefined} for $t=i-2$,  the first $i-1$ entries of $f_\sigma$ and $f_\tau$ are
			\[
			1, 1, 2, 3,\dotsc , i{-}3,\, i{-}2,
			\]
			and $\img_j=j$, for all $j\in[i-2]$. In addition, $|f^{-1}_\sigma(j)|=1$ otherwise $f_\sigma$ would lie in case \caseFour[j] holds for some $j\in[i-2]$. Hence, $f_\sigma(i) \in \{i{-}1,i\}$.

			\medskip 
			\noindent 
			If $f_\sigma(i)=i$, then $\img_i=i$ (since $f_\sigma$ is in case \caseOne[i]) and  $f_\tau(i)=\img_{i-1}=i-1$ since $i-2=\img_{i-2}$. 
			Thus, there exists some $s>i$ such that  $f_\tau(s)=f_\sigma(s)=i-1$. Then
			\begin{align*}
				f_\sigma&=1\,1\,2\,3\,\dotsm\,i{-}3\,\,i{-}2\,\,i\,\dotsm\,i{-}1\,\dotsm\,i\,\dotsm\\
				f_\tau&=1\,1\,2\,3\,\dotsm\,i{-}3\,\,i{-}2\,\,i{-}1\,\dotsm\,i{-}1\,\dotsm\,i\,\dotsm\\
				&\text{ or }\\
				f_\sigma&=1\,1\,2\,3\,\dotsm\,i{-}3\,\,i{-}2\,\,i\,\dotsm\,i\,\dotsm\,i{-}1\,\dotsm\\
				f_\tau&=1\,1\,2\,3\,\dotsm\,i{-}3\,\,i{-}2\,\,i{-}1\,\dotsm\,i\,\dotsm\,i{-}1\,\dotsm,
			\end{align*}
		Now we can see that $f_\tau$ satisfies the conditions in \eqCond[i'], for $i'=i-1$:
		\[
		f_\tau(\img_{i-1})=f_\tau(i-1)=i-2<\img_{i-1}<\img_{\ell}\text{ (since $\ell\geq i$)},\,\,
		f^{-1}_\tau(1)=\{1,2\}, \]
		\[f_\tau(\img_{i-1}+1)=
        f_\tau(i)=i-1=\img_{i-1}, \text{ and }
		|f^{-1}_\tau(\img_{i-1})|\geq 2.
		\]
		Hence, $f_\tau$ fulfills case \caseFour[i'] for $i'=i-1$ and $\fretherD(f_\tau)=f_\sigma$.
		
		\medskip
		If $f_\sigma(i)=i-1$, then $\img_{i-1}=i-1$ and $\img_{i}>i$. Thus, $f_\tau(\img_i)=i-1$. Moreover, $|f^{-1}_\sigma(i-1)|=1$. Otherwise, 
		$f_\sigma$ would satisfy \eqCond[i] whence either \caseThree[i] or \caseFour[i] would be fulfilled. This implies that $f^{-1}_\tau(i-1)=\{i, \img_i\}$. Now, it is easy to see that $f_\tau$ satisfies the conditions in \eqCond[i'] for $i'=i-1$. Therefore, $f_\tau$ fulfills the case \caseFour[i'] for $i'=i-1$ and $\fretherD(f_\tau)=f_\sigma$.
		\end{itemize}
		
		\item[\textbf{Case \caseTwo[i]:}] 
		Then $i=2$ and $f_\tau = \fixMap_2(f_\sigma)$.
		We have that $f_\tau(\img_2)=\img_2$ and $f_\tau$ belongs to \caseOne[i'] for $i'=i$,
		so $\fretherD(f_\tau)=f_\sigma$.

	\item[\textbf{Case \caseThree[i]:}] 
	In this case, \eqCond[i] hold, which state that 
	\[
	f_{\sigma}(\img_i) < \img_i < \img_\ell,\,\,
	f^{-1}_\sigma(1) = \{1,2\},\,\, f_\sigma(\img_i+1)=i, \text{ and }
  |f^{-1}_\sigma(\img_i)|\geq 2.
	\]
	And also $f_\sigma(\img_{i+1})=\img_{i+1}$.
	
	\medskip
	\noindent
	Since $f_\tau = \unfixMap_{i+1}(f_\sigma)$ and $i\geq 2$, we have $f_\tau(\img_i)<\img_i<\img_\ell$
	and $f_\tau^{-1}(1)=\{1,2\}$. And also since
	$f_\sigma(\img_i+1)=\img_i$ and  
	$f_\sigma(\img_{i+1})=\img_{i+1}$, we have $\img_i+1\neq \img_{i+1}$. 
	This implies that $f_\tau(\img_i+1)=f_\sigma(\img_i+1)=\img_i$. We also have that $|f_\tau^{-1}(\img_i)| \geq 3$,
	since $|f_\sigma^{-1}(\img_i)|\geq 2$ and $f_\tau(\img_{i+1})=\img_i$. 
	Hence, $f_\tau$ satisfies the conditions in \eqCond[i'] for $i'=i$ and then it belongs to \caseFour[i'].
	It follows that $\fretherD(f_\tau)=f_\sigma$.	
		\item[\textbf{Case \caseFour[i]:}] 
		Again, we have \eqCond[i] for $f_\sigma$, and $f_\sigma(\img_{i+1})<\img_{i+1}$. 
		Moreover,  $f_\sigma(\img_{i+1})=\img_{i}$ (from \cref{rem:casefour}).
		Recall, $f_\tau = \fixMap_{i+1}(f_\sigma)$. We also have that $f_\tau(\img_i)=f_\sigma(\img_i)<\img_i<\img_l$, $f_\tau^{-1}(1)=\{1,2\}$, and $f_\tau(\img_{i+1})=\img_{i+1}$. 
		We shall now consider two subcases.
		\begin{itemize}
			\item \emph{Suppose $\img_i+1< \img_{i+1}$ and $\vert f_\sigma^{-1}(\img_i)\vert\geq 3$.} We have that 
			$f_\tau(\img_i+1)=f_\sigma(\img_i+1)=\img_i$. Then $|f_\tau^{-1}(\img_i)|\geq 2$ (then \eqCond[i] is satisfied for $f_\tau$) and $f_\tau$ belongs to case \caseThree[i'] for $i'=i$.
			\item \emph{Otherwise,}  $|f_\tau^{-1}(\img_i)|=1$ if $\img_i+1< \img_{i+1}$ and $\vert f_\sigma^{-1}(\img_i)\vert=2$. On the other hand, $f_\tau(\img_i+1)=f_\tau(\img_{i+1})=\img_{i+1}$ if $\img_i+1=\img_{i+1}$.  Then \eqCond[i] will not be satisfied for $f_\tau$ in both cases. Therefore, $f_\tau$ lies in case \caseOne[i'] for $i'=i+1$.
		\end{itemize}
	In both cases, $\fretherD(f_\tau)=f_\sigma$.
	\end{enumerate}
\end{proof}

\begin{remark}
In \cref{tab:casesCombinations}, we give an overview under what circumstances 
a subexcedant function belonging to a case, is mapped to a different case.
\begin{table}[!ht]
\begin{center}
\begin{tabular}{ l p{2cm} p{1.7cm} p{1.7cm} p{2.5cm} }
\; & $f_\sigma\in\caseOne[i]$ & $f_\sigma\in\caseTwo[i]$ & $f_\sigma\in\caseThree[i]$ & $f_\sigma\in\caseFour[i]$ \\
\toprule
\rowcolor{gray!10}
$f_\tau\in\caseOne[i']$   & $\emptyset$  & Always, $(i'=i)$ & $\emptyset$ & $f_\tau$ not fulfill \eqref{eq:case34Conditions}, $(i'=i)$ \\
$f_\tau\in\caseTwo[i']$   & $|f_\tau^{-1}(1)|\geq 3$, $(i'=i)$  & $\emptyset$ & $\emptyset$ & $\emptyset$\\
\rowcolor{gray!10}
$f_\tau\in\caseThree[i']$ & $\emptyset$  & $\emptyset$ & $\emptyset$ & $f_\tau$ fulfills \eqref{eq:case34Conditions},  $(i'=i+1)$ \\
$f_\tau\in\caseFour[i']$  & $|f_\tau^{-1}(1)| =2$, $(i'=i-1)$  & $\emptyset$ & Always, $(i'=i)$ & $\emptyset$\\
\bottomrule
\end{tabular}
\medskip
\caption{
When $f_\tau = \fretherD(f_\sigma)$, we have the combinations under the 
conditions described in the cells of the table.
}\label{tab:casesCombinations}
\end{center}
\end{table}
\end{remark}

For example, we can see that the subexcedant function
\begin{enumerate}
	\item $f_\sigma=1133535$ satisfies case \caseOne[2]. However, its image, $f_\tau=1113535$, lies in case \caseTwo[2].
	\item $f_\sigma=1124545$ satisfies case \caseOne[3]. Then, its image, $f_\tau=1122545$, lies in case \caseFour[2].
	\item $f_\sigma=1121355$, which is in case \caseTwo[2], mapped to $f_\tau=1221355$ that belongs to case \caseOne[2]. 
	\item $f_\sigma=1123535$ satisfies case \caseThree[3]. Nevertheless, the image $f_\tau=1123335$ appears in case \caseFour[3].
	\item  $f_\sigma=11233353$ is in case \caseFour[3]. The image, $f_\tau=11235353$, is in case \caseThree[3].
	\item $f_\sigma=1123445$ is in case \caseFour[4]. However, its image, $f_\tau=1123545$, is in case \caseOne[5].
\end{enumerate}

\medskip

We conclude this subsection by listing all properties proved for $\fretherD$.
\begin{corollary}\label{cor:fretherDProps}
The map $\fretherD:\DSEF_n\longrightarrow \DSEF_n$
is an involution with the following properties.
\begin{enumerate}[label=(\roman*)]
\item 
	The image is preserved, $\supp(\fretherD(f_\sigma )) = \supp( f_\sigma )$.
\item 
	The set of right-to-left minima is preserved,
	$\RLMv( \fretherD(f_\sigma ) ) = \RLMv( f_\sigma )$.
\item 
	Whenever $f_\sigma \in \DSEF^*_n$, 
	\[
	\aexc(\fretherD( f_\sigma ) ) = \aexc( f_\sigma)\pm 1.
	\]
\end{enumerate} 
\end{corollary}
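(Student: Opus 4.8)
The plan is to assemble \cref{cor:fretherDProps} directly from the lemmas already established in this subsection, since each of its assertions has been verified individually. No new argument is required: the corollary is a summary statement, and the proof consists of citing the relevant results and checking that they fit together to describe a single map with all the listed properties.

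First I would address well-definedness and the claim that $\fretherD$ is a genuine map $\DSEF_n\longrightarrow\DSEF_n$. \cref{lem:fretherDIsWellDefined} guarantees that for every $f_\sigma\in\DSEF_n$ at least one of the four cases applies (or $f_\sigma$ is matchless), and the mutual exclusivity of the cases for a fixed smallest $i$ was noted in the construction; together these show $\fretherD$ is well-defined. \cref{lem:correctRange} then shows the output $f_\tau=\fretherD(f_\sigma)$ again lies in $\DSEF_n$, so the range is correct. Finally, \cref{lem:involution} supplies the involution property, $\fretherD\circ\fretherD=\id$.

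Next I would read off the three enumerated properties from the corresponding lemmas. Property (i), preservation of the image $\supp(\fretherD(f_\sigma))=\supp(f_\sigma)$, is exactly the first equality in \cref{lem:imPreserving}. Property (ii), preservation of the right-to-left minima values $\RLMv(\fretherD(f_\sigma))=\RLMv(f_\sigma)$, is the first equality in \cref{lem:rlmPreserving}. Property (iii), that $\aexc(\fretherD(f_\sigma))=\aexc(f_\sigma)\pm 1$ whenever $f_\sigma\in\DSEF^*_n$, is the first statement of \cref{lem:signReversing}.

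There is no substantial obstacle at this stage, since all the genuine difficulty was absorbed into the preceding lemmas, most notably the case analysis establishing well-definedness in \cref{lem:fretherDIsWellDefined} and the verification that the four cases pair up correctly under iteration in \cref{lem:involution}. The only point meriting care in writing the corollary is to make explicit that these lemmas all speak about the \emph{same} map $\fretherD$, so that the stated properties hold simultaneously rather than for separate constructions; this is immediate from the fact that each lemma is phrased in terms of $f_\tau=\fretherD(f_\sigma)$.
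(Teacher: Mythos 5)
Your proposal is correct and matches the paper exactly: the corollary is stated there as a summary of the preceding lemmas (\cref{lem:fretherDIsWellDefined}, \cref{lem:correctRange}, \cref{lem:imPreserving}, \cref{lem:rlmPreserving}, \cref{lem:signReversing}, and \cref{lem:involution}), with no further argument given or needed. Your citation of each lemma to the corresponding property (i)--(iii) and to the involution claim is precisely how the paper intends the corollary to be justified.
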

We now have an involution on derangements $\defin{\frether}:\dArr_n \to \dArr_n$
by setting
\[
\defin{\frether(\sigma)} \coloneqq (\sefToPerm \circ \fretherD \circ \sefToPerm^{-1})(\sigma),
\text{ for $\sigma \in \dArr_n$}.
\]

\begin{corollary}\label{cor:frethermap}
The involution $\frether$ satisfies the properties below:
\begin{enumerate}[label=(\roman*)]
	\item The excedance value set is preserved,  $\EXCv(\frether(\sigma)) = \EXCv(\sigma).$
	\item 
	The set of right-to-left minima is preserved,
	$\RLMv( \frether(\sigma)) = \RLMv(\sigma )$.
	\item 
		Whenever $\sigma \in \dArr^*_n$, 
		$\sgn(\frether(\sigma) ) = -\sgn( \sigma)$.
\end{enumerate}
\end{corollary}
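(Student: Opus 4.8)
The plan is to obtain all three properties by simply transporting, across the bijection $\sefToPerm$, the corresponding statements about $\fretherD$ that are collected in \cref{cor:fretherDProps}. First I would fix $\sigma \in \dArr_n$ and set $f_\sigma \coloneqq \sefToPerm^{-1}(\sigma)$, $f_\tau \coloneqq \fretherD(f_\sigma)$, and $\tau \coloneqq \frether(\sigma) = \sefToPerm(f_\tau)$. Since $\sigma \in \dArr_n$ is equivalent to $f_\sigma \in \DSEF_n$, \cref{lem:correctRange} yields $f_\tau \in \DSEF_n$, hence $\tau \in \dArr_n$; thus $\frether$ genuinely maps $\dArr_n$ to $\dArr_n$. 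Because $\fretherD$ is an involution by \cref{lem:involution} and $\sefToPerm$ is a bijection, $\frether = \sefToPerm \circ \fretherD \circ \sefToPerm^{-1}$ is an involution as well.

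For property (i) I would invoke the second equality of \cref{lem:imPreserving}, which already records $\EXCv(\sigma) = \EXCv(\tau)$ at the level of permutations; since $\tau = \frether(\sigma)$, this is exactly the assertion. Property (ii) is settled in the same manner: the second equality of \cref{lem:rlmPreserving} states $\RLMv(\sigma) = \RLMv(\tau)$, which is precisely the claim.

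For property (iii) I would first note that $\sigma \in \dArr^*_n$ means, by definition, that $f_\sigma$ is a non-matchless subexcedant function, i.e. $f_\sigma \in \DSEF^*_n$. \cref{lem:signReversing} then guarantees that $\sigma$ and $\tau$ have opposite parity, which is the desired identity $\sgn(\frether(\sigma)) = -\sgn(\sigma)$. I do not anticipate any real obstacle, since every substantive ingredient — correct range, image preservation, preservation of right-to-left minima, the parity shift $\aexc(\fretherD(f_\sigma)) = \aexc(f_\sigma) \pm 1$, and the involution property — has already been proved for $\fretherD$ on subexcedant functions. The only point deserving a moment's care is the bookkeeping that $\sefToPerm$ identifies matchless subexcedant functions with matchless derangements, so that $\DSEF^*_n$ corresponds to $\dArr^*_n$; but this is immediate from the definitions of these two distinguished sets.
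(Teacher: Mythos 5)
Your proposal is correct and is essentially the paper's own (implicit) argument: the corollary is obtained by transporting \cref{lem:correctRange}, the permutation-level equalities already recorded in \cref{lem:imPreserving} and \cref{lem:rlmPreserving}, the parity statement of \cref{lem:signReversing}, and the involution property of \cref{lem:involution} across the bijection $\sefToPerm$, using that $\sigma \in \dArr^*_n$ if and only if $f_\sigma \in \DSEF^*_n$ by definition. Nothing is missing.
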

 
\subsection{Consequences}

Before stating the main theorem, we shall first 
introduce two auxiliary involutions on $ \symS_n$,
and prove some of their properties.
Let $\defin{\flipMap} : \symS_n \to \symS_n$ be the map 
\[
\defin{\flipMap(\sigma)(k)} \coloneqq n+1-\sigma(k) \quad \text{ for } \quad k\in [n],
\]
and let $\defin{\zetaMap} : \symS_n \to \symS_n$ be the composition
$
 \defin{\zetaMap} \coloneqq \flipMap^{-1} \circ (\;\cdot\;)^{-1} \circ \flipMap.
$
In other words,
\[
  \defin{\zetaMap(\sigma)(k)} \coloneqq n+1-\sigma^{-1}(n+1-k) \quad \text{ for } \quad k\in [n].
\] 
\begin{lemma}\label{lem:conjugateProps}
The map $\zetaMap$ is an involution, and
\begin{align*}
 \EXCv(\pi) &= \{ n+1-k : k \in \EXCi( \zetaMap(\pi)) \}, \\
 \RLMv(\pi) &= \{ n+1-k : k \in \RLMi( \zetaMap(\pi)) \},\\
 \FIX(\pi)  &= \{ n+1-k : k \in \FIX( \zetaMap(\pi)) \},\\
 \inv(\pi)  &= \inv(\zetaMap(\pi)).
\end{align*}
In particular, $\zetaMap$ restricts to a sign-preserving
involution $\zetaMap:\dArr_n \to  \dArr_n$.
\end{lemma}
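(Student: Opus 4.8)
The plan is to reduce everything to direct computation from the explicit formula $\zetaMap(\pi)(k) = n+1-\pi^{-1}(n+1-k)$. First I would record that $\flipMap$ is an involution, since $\flipMap(\flipMap(\sigma))(k) = n+1-(n+1-\sigma(k)) = \sigma(k)$; thus $\flipMap^{-1}=\flipMap$ and $\zetaMap = \flipMap\circ(\,\cdot\,)^{-1}\circ\flipMap$ is a conjugate of the inversion map by $\flipMap$. Since inversion is an involution and conjugation preserves this property, $(\zetaMap)^2 = \flipMap\circ(\,\cdot\,)^{-1}\circ\flipMap\circ\flipMap\circ(\,\cdot\,)^{-1}\circ\flipMap = \id$, so $\zetaMap$ is an involution.

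For the $\inv$ identity I would invoke two standard facts: inversion of the argument preserves the inversion count, $\inv(\sigma^{-1})=\inv(\sigma)$, while $\flipMap$ turns every inversion into a non-inversion and conversely, giving $\inv(\flipMap(\sigma)) = \binom{n}{2}-\inv(\sigma)$. Applying these in turn to $\zetaMap(\pi)=\flipMap((\flipMap(\pi))^{-1})$ yields $\inv(\zetaMap(\pi)) = \binom{n}{2}-\inv((\flipMap(\pi))^{-1}) = \binom{n}{2}-\inv(\flipMap(\pi)) = \inv(\pi)$, whence the sign is preserved.

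The three set identities I would all handle by the single substitution $m = n+1-k$. Writing $\rho = \zetaMap(\pi)$, the equivalence $\rho(k)=k \iff \pi^{-1}(m)=m \iff \pi(m)=m$ gives the $\FIX$ statement, and $\rho(k)>k \iff \pi^{-1}(m)<m$, combined with the inverse-side characterization $\EXCv(\pi)=\{m : \pi^{-1}(m)<m\}$, gives the $\EXCv$ statement. The $\RLMv$ identity is the main obstacle, since right-to-left minima mix positions and values, and I must track how the order-reversal in $\flipMap$ interacts with the index--value exchange in inversion. The key auxiliary fact I would establish is the inverse-side description $v\in\RLMv(\pi) \iff \pi^{-1}(w)<\pi^{-1}(v)$ for all $w<v$. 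Then $k\in\RLMi(\rho)$ means $\rho(k)<\rho(j)$, equivalently $\pi^{-1}(n+1-k)>\pi^{-1}(n+1-j)$, for all $j>k$; under $m=n+1-k$ the range $j>k$ becomes $w=n+1-j<m$, so the condition reads $\pi^{-1}(w)<\pi^{-1}(m)$ for all $w<m$, which by the auxiliary fact is exactly $m\in\RLMv(\pi)$. Hence $\{n+1-k : k\in\RLMi(\rho)\}=\RLMv(\pi)$.

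Finally, the ``in particular'' clause follows immediately: the $\inv$ identity makes $\zetaMap$ sign-preserving, while the $\FIX$ identity exhibits $k\mapsto n+1-k$ as a bijection between $\FIX(\zetaMap(\pi))$ and $\FIX(\pi)$, so $\fix(\zetaMap(\pi))=\fix(\pi)$ and in particular $\pi$ is a derangement if and only if $\zetaMap(\pi)$ is. Combined with the involution property, this shows $\zetaMap$ restricts to a sign-preserving involution on $\dArr_n$.
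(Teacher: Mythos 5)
Your proposal is correct and follows essentially the same route as the paper: the involution property is read off from the conjugated form $\zetaMap = \flipMap\circ(\,\cdot\,)^{-1}\circ\flipMap$, and the $\FIX$, $\EXCv$, and $\RLMv$ identities are proved by the same substitution $m=n+1-k$ and the same inverse-side characterizations (in particular, your auxiliary fact $v\in\RLMv(\pi)\iff \pi^{-1}(w)<\pi^{-1}(v)$ for all $w<v$ is exactly the paper's ``every smaller value lies to the left'' step). The only cosmetic difference is the $\inv$ identity, which you obtain by composing the standard facts $\inv(\sigma^{-1})=\inv(\sigma)$ and $\inv(\flipMap(\sigma))=\binom{n}{2}-\inv(\sigma)$, whereas the paper reindexes the inversion pairs $(i,j)\mapsto(n+1-j,\,n+1-i)$ directly; both are routine and rest on the same underlying fact.
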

\begin{proof}
It follows immediately from the definition that $\zetaMap$ is an involution. 
For the first property, let $i\in [n]$ and set $j \coloneqq n+1-i$.
We then see that
\begin{align*}
 i\in\EXCv(\pi) &\iff \pi^{-1}(i)<i \\
                &\iff n+1-\pi^{-1}(i)>n+1-i \\
                &\iff n+1-\pi^{-1}(n+1-j)>j \\
                &\iff \zetaMap(\pi)(j)>j \\
                &\iff i\in \{n+1-k:k \in \EXCi(\zetaMap(\pi)) \}.
\end{align*}
Now for the right-to-left minima, again with $j \coloneqq n+1-i$, we have 
\begin{align*}
i\in \{n+1-k:k \in \RLMi(\zetaMap(\pi)) \} &\iff 
\zetaMap(\pi)(j)<\zetaMap(\pi)(t) \text{ whenever } j<t \leq n \\
&\iff \pi^{-1}(n+1-j)>\pi^{-1}(n+1-t) \text{ whenever } j<t \leq n \\
&\iff \pi^{-1}(i)>\pi^{-1}(k) \text{ whenever } k\in[i-1] \\
&\iff \text{every $k\in[i-1]$ lies to the left of $i$ in $\pi$}\\
&\iff i\in\RLMv(\pi).
\end{align*}
Similarly, with $i \in [n]$, $j \coloneqq n+1-i$,
\begin{align*}
i\in\FIX(\pi) &\iff \pi^{-1}(i)=i \\
&\iff \pi^{-1}(n+1-j)=n+1-j\\
&\iff n+1-\pi^{-1}(n+1-j)=j\\
&\iff \zetaMap(\pi)(j)=j\\
&\iff j\in\FIX(\zetaMap(\pi)).
\end{align*} 
This last property also shows that $\zetaMap$ is an involution on $\dArr_n$.

Finally, we have that
\begin{align*}
	\inv(\zetaMap(\pi)) &= |\{ (i,j) : 1\leq i < j \leq n \text{ such that } \zetaMap(\pi)(i)>\zetaMap(\pi)(j) \}|\\
	&=|\{ (i,j) : 1\leq i < j \leq n \text{ such that }\pi^{-1}(n+1-i)<\pi^{-1}(n+1-j) \}|\\
	&=|\{ (n+1-k,n+1-l) : 1\leq l < k \leq n \text{ such that }\pi^{-1}(k)<\pi^{-1}(l) \}|\\
	&=|\{ (l',k') : 1\leq l' < k' \leq n \text{ such that }\pi^{-1}(k')<\pi^{-1}(l') \}|\\
	&=\inv(\pi),
\end{align*}
so $\zetaMap$ preserves the number of inversions.
In particular, $\zetaMap$ preserves the sign.
\end{proof}

We are now ready to prove the main theorem in this paper.
\begin{theorem}\label{thm:theMainTheorem}
 We have that 
 \begin{equation}\label{eq:mainResultInText}
 \sum_{\pi \in \dArr_n} (-1)^{\inv(\pi)} 
 \xvec_{\RLMv(\pi)} \yvec_{ \EXCv(\pi)} 
 =
 (-1)^{n-1} \sum_{j=1}^{n-1} x_1 \dotsm x_j \cdot y_{j+1}\dotsm y_{n}.
 \end{equation} 
Moreover, 
 \begin{equation}\label{eq:mainResultInText2}
 \sum_{\pi \in \dArr_n} (-1)^{\inv(\pi)} 
 \xvec_{\RLMi(\pi)} 
 \yvec_{\EXCi(\pi)}
 =
 (-1)^{n-1} \sum_{j=1}^{n-1} y_1 \dotsm y_j \cdot x_{j+1}\dotsm x_{n}.
 \end{equation} 
\end{theorem}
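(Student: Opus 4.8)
The plan is to read \eqref{eq:mainResultInText} off the involution $\frether$ of \cref{cor:frethermap}, and then to derive \eqref{eq:mainResultInText2} from \eqref{eq:mainResultInText} by substituting the variable of summation through $\zetaMap$ and relabeling the free variables. For \eqref{eq:mainResultInText} I would group the terms of the left-hand sum into orbits of $\frether$. Since $\frether$ preserves both $\EXCv$ and $\RLMv$, it fixes the monomial $\xvec_{\RLMv(\pi)}\yvec_{\EXCv(\pi)}$, and on $\dArr^*_n$ it reverses the sign; hence the two terms indexed by $\pi$ and $\frether(\pi)$ cancel for every non-matchless $\pi$. Only the fixed points of $\frether$ survive, and these are exactly the matchless derangements, since $\frether$ is conjugate through $\sefToPerm$ to $\fretherD$, whose fixed points are the matchless subexcedant functions.

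It then remains to evaluate the surviving contribution. By \cref{lem:propertiesOfMatchless}, there is one matchless derangement for each $k\in\{1,\dots,n-1\}$, with $\sgn=(-1)^{n-1}$, $\RLMv=[k]$, and $\EXCv=[n]\setminus[k]$, so its term equals $(-1)^{n-1}x_1\dotsm x_k\,y_{k+1}\dotsm y_n$. Summing over $k$ gives precisely the right-hand side of \eqref{eq:mainResultInText}.

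To obtain \eqref{eq:mainResultInText2}, I would replace $\pi$ by $\zetaMap(\pi)$ in \eqref{eq:mainResultInText}. By \cref{lem:conjugateProps}, $\zetaMap$ is a sign-preserving involution of $\dArr_n$ with $\inv(\zetaMap(\pi))=\inv(\pi)$, and the relations there give $\RLMv(\zetaMap(\pi))=\{n+1-k:k\in\RLMi(\pi)\}$ and $\EXCv(\zetaMap(\pi))=\{n+1-k:k\in\EXCi(\pi)\}$. Hence the left-hand side becomes $\sum_{\pi}(-1)^{\inv(\pi)}\prod_{k\in\RLMi(\pi)}x_{n+1-k}\prod_{k\in\EXCi(\pi)}y_{n+1-k}$. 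Relabeling the free variables by $x_i\mapsto x_{n+1-i}$ and $y_i\mapsto y_{n+1-i}$ converts this into $\sum_{\pi}(-1)^{\inv(\pi)}\xvec_{\RLMi(\pi)}\yvec_{\EXCi(\pi)}$, the desired left-hand side; the same relabeling sends $x_1\dotsm x_j\,y_{j+1}\dotsm y_n$ to $x_{n+1-j}\dotsm x_n\,y_1\dotsm y_{n-j}$, and the reindexing $m=n-j$ turns the right-hand side of \eqref{eq:mainResultInText} into that of \eqref{eq:mainResultInText2}.

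The substantive difficulty is already spent in the construction of $\frether$ behind \cref{cor:frethermap} and in \cref{lem:conjugateProps}; what remains is essentially bookkeeping. The one point that needs care is the index reversal in the last step: one must check that $k\mapsto n+1-k$ simultaneously converts excedance and right-to-left-minima \emph{values} into the corresponding \emph{indices}, and that the accompanying reindexing $j\mapsto n-j$ correctly swaps the $x$- and $y$-blocks on the right-hand side.
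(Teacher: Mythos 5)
Your proposal is correct and takes essentially the same route as the paper: the first identity via orbit-cancellation under the sign-reversing involution $\frether$ of \cref{cor:frethermap}, with only the matchless derangements surviving and evaluated through \cref{lem:propertiesOfMatchless}, and the second identity deduced from the first using $\zetaMap$ (\cref{lem:conjugateProps}) together with the relabeling $x_i\mapsto x_{n+1-i}$, $y_i\mapsto y_{n+1-i}$ and the reindexing $j\mapsto n-j$. The only difference is the order of operations in the second part (you substitute $\pi\mapsto\zetaMap(\pi)$ before relabeling the variables, the paper relabels first), which is immaterial.
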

\begin{proof}
By applying the involution $\frether$ and 
using all the properties listed in \cref{cor:frethermap},
all terms in the left-hand side of \eqref{eq:mainResultInText} cancel, except the terms with $\pi \in \dArr^*_n$. Thus, the left-hand side of \eqref{eq:mainResultInText} equals
\begin{equation*}
\sum_{\pi\in  \dArr^*_n }
	(-1)^{\inv(\pi)} \xvec_{\RLMv(\pi)} \yvec_{\EXCv(\pi)}. 
\end{equation*}	
By \cref{lem:propertiesOfMatchless}, this sum is equal to
\begin{equation*}
\sum_{k=1}^{n-1}
(-1)^{n-1} \xvec_{[k]} \yvec_{[n] \setminus [k]},
\end{equation*}
which is the right-hand side of \eqref{eq:mainResultInText}.
\medskip

Let $\defin{\rho(S)}\coloneqq \{n+1-s : s\in S\}$ whenever $S\subseteq [n]$.
By applying the change of variables
$x_i \mapsto x_{n+1-i}$, $y_j \mapsto y_{n+1-j}$ on both sides of 
\eqref{eq:mainResultInText}, we get
\begin{align*}
 \sum_{\pi \in \dArr_n} (-1)^{\inv(\pi)} 
 \xvec_{\rho(\RLMv(\pi))}
  \yvec_{\rho(\EXCv(\pi))}
&= (-1)^{n-1} \sum_{j=1}^{n-1} x_n \dotsm x_{n+1-j} \cdot y_{n-j}\dotsm y_{1} \\
&= (-1)^{n-1} \sum_{j'=1}^{n-1} x_{j'+1} \dotsm x_{n} \cdot y_{1}\dotsm y_{j'}.
\end{align*}
Now by \cref{lem:conjugateProps},
\begin{align*}
\sum_{\pi \in \dArr_n} (-1)^{\inv(\pi)} 
\xvec_{\rho(\RLMv(\pi))}
\yvec_{\rho(\EXCv(\pi))}
\!
=
\!
\sum_{\pi \in \dArr_n} (-1)^{\inv(\zetaMap(\pi))} 
\xvec_{\RLMi(\zetaMap(\pi))}
\yvec_{\EXCi(\zetaMap(\pi))}.
\end{align*}
Since $\zetaMap$ sends $\dArr_n$ to $\dArr_n$, 
the last sum must be exactly the left-hand side of \eqref{eq:mainResultInText2}, and we are done.
\end{proof}

\begin{corollary}
By letting $x_j \to 1$ and $y_j \to t$, we have that 
\begin{equation*}
 \sum_{\pi \in \dArr_n} (-1)^{\inv(\pi)} t^{\exc(\pi)}
 =
 (-1)^{n-1}(t+t^2+\dotsb+t^{n-1}).
 \end{equation*}
By comparing coefficients of $t^k$,
we get \cref{eq:mainEquation}.
In a similar manner,
\begin{equation*}
\sum_{\pi \in \dArr_n} (-1)^{\inv(\pi)} t^{\rlm(\pi)}
 =
 (-1)^{n-1}(t+t^2+\dotsb+t^{n-1}).
\end{equation*}
\end{corollary}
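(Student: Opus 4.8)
The plan is to obtain both displayed identities as direct specializations of \cref{thm:theMainTheorem}, so that no new combinatorial input is needed. For the first identity I would start from \eqref{eq:mainResultInText} and substitute $x_j \to 1$ and $y_j \to t$ for every $j$. On the left-hand side the factor $\xvec_{\RLMv(\pi)}$ collapses to $1$, while $\yvec_{\EXCv(\pi)}$ becomes $t^{|\EXCv(\pi)|} = t^{\exc(\pi)}$, using that $|\EXCv(\sigma)| = \exc(\sigma)$; this recovers the summand $(-1)^{\inv(\pi)} t^{\exc(\pi)}$. On the right-hand side the monomial $x_1 \dotsm x_j \cdot y_{j+1} \dotsm y_n$ reduces to $t^{n-j}$, and as $j$ ranges over $\{1,\dotsc,n-1\}$ the exponent $n-j$ sweeps out $\{1,\dotsc,n-1\}$, yielding $(-1)^{n-1}(t + t^2 + \dotsb + t^{n-1})$.

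With this polynomial identity in hand, I would extract the coefficient of $t^k$. On the left this coefficient is $\sum_{\pi \in \dArr_n,\, \exc(\pi)=k} (-1)^{\inv(\pi)}$, which by definition of the sign equals $|\{\pi \in \dArr^e_n : \exc(\pi)=k\}| - |\{\pi \in \dArr^o_n : \exc(\pi)=k\}|$; on the right it equals $(-1)^{n-1}$ exactly for $1 \le k \le n-1$ and $0$ otherwise. Matching the two reproduces \eqref{eq:mainEquation} verbatim.

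For the second identity I would apply the opposite specialization $x_j \to t$, $y_j \to 1$ to the same equation \eqref{eq:mainResultInText}. Now $\xvec_{\RLMv(\pi)}$ becomes $t^{\rlm(\pi)}$, since $|\RLMv(\pi)| = \rlm(\pi)$, and $\yvec_{\EXCv(\pi)}$ collapses to $1$, while on the right $x_1 \dotsm x_j \cdot y_{j+1} \dotsm y_n$ becomes $t^{j}$; summing over $j \in \{1,\dotsc,n-1\}$ again gives $(-1)^{n-1}(t + \dotsb + t^{n-1})$. One could equally invoke \eqref{eq:mainResultInText2} with $x_j \to t$, $y_j \to 1$. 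There is no genuine obstacle in this corollary, as everything rests on the already-established \cref{thm:theMainTheorem}; the only point demanding care is the bookkeeping of the index shift on the right-hand side, namely checking that the exponents $n-j$ (respectively $j$) run over the full range $\{1,\dotsc,n-1\}$ so that the two one-variable polynomials coincide.
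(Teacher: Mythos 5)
Your proposal is correct and matches the paper's intent exactly: the corollary is stated as a direct specialization of \cref{thm:theMainTheorem} (with $x_j \to 1$, $y_j \to t$ for the excedance identity and the symmetric substitution $x_j \to t$, $y_j \to 1$, or equivalently \eqref{eq:mainResultInText2}, for the right-to-left minima identity), and your bookkeeping of the exponents $n-j$ and $j$ ranging over $\{1,\dotsc,n-1\}$, together with the coefficient extraction giving \eqref{eq:mainEquation}, is precisely the argument the paper leaves implicit.
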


\section{A proof using generating functions}\label{sec:genFunc}

We shall first define an involution
$\iota:\symS_n \to \symS_n$ such that for $\pi\in\symS_n$,
\begin{enumerate}
 \item $\EXCi(\iota(\pi))=\EXCi(\pi)$,
 \item $\sgn(\iota(\pi))=-\sgn(\pi)$ if $\iota(\pi)\neq\pi$,
 \item $\sgn(\pi)=(-1)^{\exc(\pi)}$ if $\iota(\pi)=\pi$,
 \item for each $E \subseteq [n-1]$, there is a unique $\pi$ with $\iota(\pi)=\pi$ such that $\EXCi(\pi)=E$.
\end{enumerate}
We shall now describe $\iota$,
which is essentially the one given in \cite{Mantaci1993}.

\begin{definition}
Define a mapping $\defin{\iota}: \symS_n \rightarrow \symS_n$ by 
$\iota(\pi)=\pi'$,
where $\pi'$ is obtained from $\pi$ by swapping $\pi(l)$ and $\pi(m)$,
where
\[
(l,m)=\max\{(i,j): 2\leq i<j\leq n\text{ and either }i, j\in \EXCi(\pi)\text{ or }i,j\notin \EXCi(\pi)\}
\] 
with respect to lexicographical order, so that $\EXCi(\pi')=\EXCi(\pi)$. 
It can be defined as $\iota(\pi)=(l,m)\pi $ if $\pi$ is in cycle form.
If there is no such $(l,m)$, then $\iota(\pi)=\pi$ and we say that $\pi$ is \defin{critical}.
\end{definition}
From the definition, it is clear that (1) and (2) hold.
We must show that (3) and (4) hold as well, which are done in \cref{prop:mantaciInvolution}.

\begin{lemma}
	Suppose $\pi \in \symS_n$, and that $i,j\in\EXCi(\pi)$ with $i<j$. If $\pi(i)>j$, then $\pi$ is not critical.
	
	Similarly, if $i',j'\notin\EXCi(\pi)$ such that $i'<j'$
	with $\pi(j')\leq i'$, then $\pi$ is not critical.
\end{lemma}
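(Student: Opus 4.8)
The plan is to extract from the definition of $\iota$ a numerical test for when a swap is \emph{admissible} --- i.e.\ when interchanging the two values keeps $\EXCi$ fixed, as required by the clause ``so that $\EXCi(\pi')=\EXCi(\pi)$'' --- and then to exhibit one admissible pair in each case: a single admissible pair makes the maximizing set in the definition of $\iota$ non-empty, so $\iota(\pi)\neq\pi$ and $\pi$ is not critical. First I would record the test. Fix positions $2\le a<b\le n$ of the same excedance type and let $\pi'$ interchange $\pi(a)$ and $\pi(b)$; only positions $a$ and $b$ can change status. If $a,b\in\EXCi(\pi)$, then $a$ receives $\pi(b)>b>a$ and stays an excedance automatically, whereas $b$ receives $\pi(a)$ and stays an excedance iff $\pi(a)>b$; so the swap is admissible exactly when $\pi(a)>b$. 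Dually, if $a,b\notin\EXCi(\pi)$, then $b$ receives $\pi(a)\le a<b$ and stays a non-excedance automatically, whereas $a$ stays a non-excedance iff $\pi(b)\le a$; so the swap is admissible exactly when $\pi(b)\le a$.

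For the first statement, assume $i,j\in\EXCi(\pi)$, $i<j$, and $\pi(i)>j$. When $i\ge 2$ the pair $(i,j)$ is an admissible excedance pair by the test above, so $\pi$ is not critical. The delicate case is $i=1$, because position $1$ is excluded from the pairs defining $\iota$, so I cannot use $(1,j)$ directly; instead I would manufacture an admissible pair of non-excedances. Since $\pi(1)>j\ge 2$ we have $\pi(1)\ge 3$, so the values $1$ and $2$ occupy positions $r\coloneqq\pi^{-1}(1)$ and $s\coloneqq\pi^{-1}(2)$, both distinct from $1$ and hence $\ge 2$. Both positions are non-excedances, as their values $1$ and $2$ are at most their positions, and ordering them as $l<m$ gives $2\le l<m$ with $\pi(m)\le 2\le l$. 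Thus $(l,m)$ is an admissible non-excedance pair and $\pi$ is not critical.

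For the second statement, assume $i',j'\notin\EXCi(\pi)$, $i'<j'$, and $\pi(j')\le i'$. Here the analogous obstruction does not occur: if $i'=1$, then $1\notin\EXCi(\pi)$ forces the fixed point $\pi(1)=1$, and $\pi(j')\le i'=1$ would give $\pi(j')=1=\pi(1)$, contradicting injectivity; hence $i'\ge 2$. Then $(i',j')$ is an admissible non-excedance pair by the test, so $\pi$ is not critical.

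The two admissibility tests are routine; the only genuinely delicate step is the $i=1$ subcase of the first statement, where the forbidden swap $(1,j)$ must be replaced by a non-excedance swap read off from the positions of the two smallest values. The asymmetry between the two statements is exactly that a non-excedant position $1$ is forced to be a fixed point, which is incompatible with the hypothesis $\pi(j')\le 1$, so no special case is needed there.
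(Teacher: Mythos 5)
Your proof is correct, and its core is exactly the paper's argument: the paper proves the first statement by swapping $\pi(i)$ and $\pi(j)$ and noting that both positions remain excedances because $\pi(j)>j>i$ and $\pi(i)>j$, then dismisses the second statement with ``a similar argument.'' Where you genuinely differ is in noticing that the definition of $\iota$ only ranges over pairs $(l,m)$ with $2\le l<m\le n$, while the lemma's hypothesis permits $i=1$ (position $1$ is an excedance whenever $\pi(1)>1$, and \cref{cor:critical}, which invokes this lemma, may indeed apply it with $j_1=1$). The paper's one-line proof silently uses the pair $(i,j)$ even when $i=1$, which is not a legal pair for $\iota$; your substitute swap --- the two non-excedance positions $\pi^{-1}(1),\pi^{-1}(2)$, both $\ge 2$ since $\pi(1)\ge 3$, and admissible because the value at the later of the two is at most $2$, hence at most the earlier position --- closes that gap, so the lemma remains true as stated. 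Your observation that the second statement needs no such patch (a non-excedance at position $1$ forces $\pi(1)=1$, which contradicts $\pi(j')\le i'=1$ by injectivity) likewise makes precise what ``a similar argument'' must mean there. In short: same strategy of exhibiting one admissible same-type swap, but your version repairs a small, genuine oversight in the published proof and explains the asymmetry between the two halves of the statement.
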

\begin{proof}
	After swapping $\pi(i)$ and $\pi(j)$,
	 both $i$ and $j$ remain excedances since $\pi(j)>j>i$
	and $\pi(i)>j$. Hence, $\pi$ is not critical as there is at least
	one pair of entries where  we can perform a swap as in the definition of $\iota$.
	A similar argument proves the second statement.
\end{proof}

\begin{corollary}\label{cor:critical}
	Suppose $\pi\in\symS_n$ with $\EXCi(\pi)=\{j_1,\dotsc,j_k\}$ and $[n]\setminus\EXCi(\pi)=\{i_1,\dotsc,i_{n-k}\}$. Then, $\pi$ is critical iff 
	\begin{equation}\label{eq:excSorted}
		\begin{aligned}
			&j_1 < \pi(j_1) \leq j_2 < \pi(j_2) \leq j_3 < \pi(j_3) \leq \dotsb \leq j_k< \pi(j_k)\text{ and} \\
			&\pi(i_1) \leq i_1 < \pi(i_2) \leq i_2 < \pi(i_3) \leq i_3 <\dotsb < \pi(i_{n-k}) \leq i_{n-k}=n.
		\end{aligned}
	\end{equation}
Moreover, if $i_{n-k-1}<j_1$, then 
\[
i_1 < i_2< \dotsb < i_{n-k-1}<j_1<j_2 < \dotsb < j_k <i_{n-k}
\]
and it follows directly that 
\begin{equation}\label{eq:specialCase}
	\pi = (n-k\quad n-k+1 \quad \dotsc \quad n-1\quad  n)
\end{equation}
with $\inv(\pi)=\exc(\pi)$.
\end{corollary}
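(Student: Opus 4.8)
I want to prove \cref{cor:critical}, which characterizes critical permutations via the interlacing inequalities \eqref{eq:excSorted} and then derives the special form \eqref{eq:specialCase} under the hypothesis $i_{n-k-1}<j_1$. The natural strategy is to use the preceding lemma as the engine: the lemma says that a violation of either inequality family forces a valid swap, hence non-criticality, so its contrapositive should give the forward direction of the iff. The reverse direction should follow by checking that the sorted configuration in \eqref{eq:excSorted} leaves no admissible swap for $\iota$ to perform.

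\textbf{First, the iff.} For the direction ``critical $\implies$ \eqref{eq:excSorted}'', I would argue as follows. List the excedance indices $j_1<\dots<j_k$ and the anti-excedance indices $i_1<\dots<i_{n-k}$. The strict inequalities $j_t<\pi(j_t)$ and $\pi(i_s)\le i_s$ are just the definitions of excedance and anti-excedance. The content is the \emph{interlacing}: $\pi(j_t)\le j_{t+1}$ for each $t$, and $\pi(i_s)\le i_s<\pi(i_{s+1})$, i.e.\ $i_s<\pi(i_{s+1})$. Suppose, for contradiction, that $\pi(j_t)>j_{t+1}$ for some $t$. Since $j_t<j_{t+1}$ are both excedances and $\pi(j_t)>j_{t+1}$, the previous lemma (first statement, applied with $i=j_t$, $j=j_{t+1}$) says $\pi$ is not critical. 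Symmetrically, if $\pi(i_{s+1})\le i_s$ for some $s$, then with $i'=i_s<j'=i_{s+1}$ both anti-excedances and $\pi(i_{s+1})\le i_s$, the second statement of the lemma gives non-criticality. This proves the forward direction. For the converse, I would observe that an admissible swap in the definition of $\iota$ requires two indices $2\le a<b\le n$ of the same excedance-type whose exchange keeps both of that type; I would verify that under \eqref{eq:excSorted} every same-type pair already satisfies the blocking inequality ($\pi(j_a)\le j_b$ for excedances, $\pi(i_b)\le i_a$ for anti-excedances is impossible given the sorted chain), so no swap preserving the excedance set exists, forcing $\iota(\pi)=\pi$.

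\textbf{Next, the special case.} Assume $i_{n-k-1}<j_1$. Because the $i$'s and $j$'s partition $[n]$ and are each increasing, and since $n=i_{n-k}$ is always an anti-excedance (as $\pi(n)\le n$), the hypothesis $i_{n-k-1}<j_1$ forces all but the largest anti-excedance index to precede every excedance index, giving the stated order
\[
i_1<i_2<\dots<i_{n-k-1}<j_1<j_2<\dots<j_k<i_{n-k}=n.
\]
Thus $\{j_1,\dots,j_k\}=\{n-k,\dots,n-1\}$ and $i_{n-k}=n$, while $i_1,\dots,i_{n-k-1}=1,\dots,n-k-1$. Feeding these consecutive values into \eqref{eq:excSorted}: the anti-excedance chain $\pi(i_s)\le i_s<\pi(i_{s+1})$ on the initial block $1,\dots,n-k-1$ forces $\pi(i_s)=i_s$ there (each $\pi(i_s)$ is squeezed to equal $i_s$), and the excedance chain $j_t<\pi(j_t)\le j_{t+1}$ on consecutive integers forces $\pi(j_t)=j_t+1=j_{t+1}$, with the final excedance mapping $\pi(n-1)=n$ and the wrap $\pi(n)=\pi(i_{n-k})=n-k$. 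This is exactly the cycle \eqref{eq:specialCase}. The inversion count $\inv(\pi)=\exc(\pi)=k$ then follows since the only inversions are the $k$ pairs $(j_t,n)$, matching the single length-$(k+1)$ cycle having sign $(-1)^k$.

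\textbf{Main obstacle.} I expect the routine part to be the squeezing argument producing the exact values, which is forced by integrality once the indices are consecutive. The genuinely delicate step is the converse direction of the iff: I must be careful that ``no same-type pair admits a swap'' really is equivalent to \eqref{eq:excSorted} and not merely implied by it, since the definition of $\iota$ selects the lexicographically largest admissible pair and requires $i\ge 2$. I would need to check that the $i\ge 2$ restriction does not create a spurious critical permutation, and that the chain inequalities are tight enough that any strict violation yields a genuinely admissible (type-preserving) swap rather than one that alters the excedance set; verifying this tightness, rather than the arithmetic of \eqref{eq:specialCase}, is where the care lies.
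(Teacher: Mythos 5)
Your proposal is correct and follows essentially the same route as the paper: the forward implication via the contrapositive of the preceding lemma, the converse by checking that the chain inequalities in \eqref{eq:excSorted} block every same-type swap, and the special case \eqref{eq:specialCase} by the squeezing argument (which the paper leaves as ``follows directly''). One remark: the $i\ge 2$ subtlety you flag actually concerns the \emph{forward} direction rather than the converse (a critical permutation could in principle hide a violation at the link involving position $1$), and it is really a gap in the proof of the preceding lemma rather than in this corollary --- granting that lemma as stated, your argument is complete, exactly as the paper's is.
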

\begin{proof}
	The forward statement follows directly from \cref{cor:critical}. Now suppose that \eqref{eq:excSorted} holds. Then 
	\[
	\pi(j_s)\leq j_{s'}\text{ and } i_r< \pi(i_{r'}),\text{ for }s<s'\text{ and }r<r'.
	\] 
	However, after swapping $\pi(j_s)$ and $\pi(j_{s'})$, $j_{s}$ remains an excedance while $j_{s'}$ is not since $\pi(j_{s'})> j_{s'}>j_s$ and $\pi(j_s)\leq j_{s'}$. Similarly, swapping $\pi(i_r)$ and $\pi(i_{r'})$ preserves $i_{r'}$ being an anti-excedance but not $i_{r}$ since $\pi(i_{r})< i_{r}<i_{r'}$ and $\pi(i_{r'})> i_{r}$. Thus, $\pi$ is critical.
\end{proof}

The following is similar to an argument in \cite{Mantaci1993},
where a slightly different\footnote{We note that the original proof has a few typos.}
approach is taken.
\begin{proposition}\label{prop:mantaciInvolution}
	Let $E=\{j_1,j_2,\dotsc,j_k\} \subseteq [n-1]$, and define $\pi_E \in \symS_n$
	with excedance set $E$ via
	\begin{align*}
		\pi_E(j_s) &=j_s+1 \quad &&\text{ for each } j_s\in E, \\
		\pi_E(i_r) &=i_{r-1}+1\quad &&\text{ for each } i_r\in [n]\setminus E, \,\,(i_0 \coloneqq 0).
	\end{align*}
	Then $\pi_E$ is the unique critical permutation in $\symS_n$ with $\EXCi(\pi_E)=E$,
	and $\inv(\pi_E)=|E|$.
\end{proposition}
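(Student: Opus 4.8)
The plan is to verify three things about the explicitly constructed permutation $\pi_E$: that it is a well-defined permutation in $\symS_n$ with excedance set exactly $E$, that it is critical, and that it is the \emph{unique} critical permutation with this excedance set; the inversion count $\inv(\pi_E)=|E|$ will then follow from the characterization of critical permutations. First I would check that $\pi_E$ is a genuine bijection on $[n]$. The values on the excedance positions are $\{j_s+1 : j_s \in E\}$ and the values on the anti-excedance positions are $\{i_{r-1}+1 : i_r \notin E\}$; I would argue these two value sets are disjoint and together exhaust $[n]$, for instance by showing each $v\in[n]$ has a unique preimage, splitting into whether $v-1\in E$ or not. Simultaneously I would confirm the excedance set is correct: on $j_s\in E$ we have $\pi_E(j_s)=j_s+1>j_s$, so $j_s$ is an excedance, and on $i_r\notin E$ we have $\pi_E(i_r)=i_{r-1}+1\le i_r$ (using $i_{r-1}<i_r$, hence $i_{r-1}+1\le i_r$), so $i_r$ is an anti-excedance. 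This pins down $\EXCi(\pi_E)=E$.

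Next I would show $\pi_E$ is critical by verifying it satisfies the sorted inequalities \eqref{eq:excSorted} from \cref{cor:critical}. For the excedance chain, consecutive excedances $j_s<j_{s+1}$ satisfy $\pi_E(j_s)=j_s+1$, and since $j_s+1\le j_{s+1}$ I get $j_s<\pi_E(j_s)=j_s+1\le j_{s+1}<\pi_E(j_{s+1})$, which is exactly the required pattern. For the anti-excedance chain I would similarly use $\pi_E(i_r)=i_{r-1}+1$ to obtain $\pi_E(i_r)\le i_r<\pi_E(i_{r+1})$, noting that $\pi_E(i_{r+1})=i_r+1>i_r$. Together these give precisely \eqref{eq:excSorted}, so \cref{cor:critical} certifies that $\pi_E$ is critical.

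For uniqueness, I would invoke \cref{cor:critical} in the reverse direction: any critical $\pi$ with $\EXCi(\pi)=E$ must satisfy \eqref{eq:excSorted}, and I claim these inequalities force the values to coincide with those of $\pi_E$. The cleanest route is to observe that \eqref{eq:excSorted} describes two interleaved increasing sequences, and a counting/pigeonhole argument then determines each $\pi(j_s)$ and $\pi(i_r)$ uniquely. Concretely, the inequalities $j_s<\pi(j_s)\le j_{s+1}$ combined with injectivity and the constraint that the $\pi(j_s)$ are distinct values each landing in a half-open interval should pin $\pi(j_s)=j_s+1$, and analogously $\pi(i_r)=i_{r-1}+1$; this is the step I expect to require the most care, since one must use the global bijectivity (not just the local inequalities) to rule out any slack. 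Finally, $\inv(\pi_E)=|E|$ follows because the critical condition \eqref{eq:excSorted} arranges the excedance and anti-excedance values into increasing order, so the only inversions are those forced by each excedance $j_s$ (where $\pi(j_s)=j_s+1$ jumps over exactly one position), giving one inversion per element of $E$. I would make this last count precise by showing each excedance contributes exactly one inversion and no other inversions occur, matching $\inv(\pi_E)=\exc(\pi_E)=|E|$ as already noted for critical permutations of the special form in \cref{cor:critical}.
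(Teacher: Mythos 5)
Your overall strategy is viable but genuinely different from the paper's. The paper proves criticality of $\pi_E$ directly (swapping two excedance values, or two anti-excedance values, of $\pi_E$ always destroys the excedance set), and proves uniqueness by strong induction on $n$: for a critical $\pi$ with $\EXCi(\pi)=E$ it analyzes the positions beyond the second-largest anti-excedance $i_{n-k-1}$, shows that $\pi$ must map $[i_{n-k-1}]$ to itself with $\pi(n)=i_{n-k-1}+1$, and restricts $\pi$ to a smaller critical permutation; the count $\inv(\pi_E)=|E|$ comes out of the same induction. You instead obtain criticality from the characterization \eqref{eq:excSorted} of \cref{cor:critical} (legitimate, since that corollary is an equivalence proved beforehand), propose a forcing argument for uniqueness directly from \eqref{eq:excSorted} plus bijectivity, and compute $\inv(\pi_E)$ by direct enumeration. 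Your well-definedness check, your verification of \eqref{eq:excSorted} for $\pi_E$, and your inversion count (the only inversions of $\pi_E$ are the pairs formed by an excedance $j_s$ and the first anti-excedance to its right) are all correct, and the last is arguably cleaner than the paper's inductive bookkeeping.

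The uniqueness step, however, is where your plan is not yet a proof, and the mechanism you name is insufficient as stated. Saying that the $\pi(j_s)$ are ``distinct values each landing in a half-open interval'' adds nothing to \eqref{eq:excSorted}: the intervals $(j_s,j_{s+1}]$ are pairwise disjoint, so distinctness is automatic, and slack genuinely remains --- for $n=3$ and $E=\{1\}$ the excedance chain alone allows $\pi(1)\in\{2,3\}$. You do flag that global bijectivity must enter, but you never say how, and that is the mathematical heart of the proposition. One way to close it: induct on the \emph{value} $v=1,2,\dotsc,n$. By \eqref{eq:excSorted} at most two positions can map to $v$, namely the largest excedance $j_s<v$ (any earlier excedance $j_{s'}$ has $\pi(j_{s'})\le j_{s'+1}\le j_s<v$, and any excedance $\ge v$ maps above $v$) and the unique anti-excedance $i_r$ with $v\in(i_{r-1},i_r]$. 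Assume every value $u<v$ has the same preimage under $\pi$ as under $\pi_E$. If $v-1\in E$, then $i_{r-1}+1<v$, so the hypothesis gives $\pi(i_r)=i_{r-1}+1\ne v$, forcing $\pi(v-1)=v$; if $v-1\notin E$, then $j_s+1<v$, so $\pi(j_s)=j_s+1\ne v$, forcing $\pi(i_r)=v$. In both cases the preimage of $v$ agrees with that of $\pi_E$, and uniqueness follows. With this (or an equivalent) argument supplied, your route goes through as a genuine alternative to the paper's induction on $n$.
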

\begin{proof}
	We first show that $\pi_E$ is critical, so assume it is not. 
	Then swapping $\pi_E(j_{s})$ and $\pi_E(j_{s'})$, for $s < s'$,
	produces a $\pi_E'$ from $\pi_E$ with the same set of (anti)excedances. However, $\pi_E'(j_{s'}) = j_s+1 < j_{s'}+1$ implies $\pi_E'(j_{s'})\leq j_{s'}$. 
	So, the set of excedances is not preserved.
	A similar argument shows that we cannot swap a pair of anti-excedances
	either.
	
	Now we have established that $\pi_E$ is critical, we must show that
	there are no other critical permutations in $\symS_n$ with $E$ as excedance set.
	
	We proceed by (strong) induction over $n$.  The base case $n=1$ is trivial. And if $\pi(n)=n$, then the statement follows easily by induction hypothesis. From now on we consider $\pi^{-1}(n)<n$.
	
	First we handle the case $|E|=n-1$ where $E = \{1,2,\dotsc,n-1\}$.
	There is only one permutation with $E$ as excedance set,
	namely $\pi = (1\;2\; \dotsc \; n)$ in cycle form, and this is exactly $\pi_E$,
	where $\inv(\pi_E)=|E|$.
	
	Suppose now that $|E|=k<n-1$, $E = \{j_1,\dotsc,j_k\}$, and $\{i_1,\dotsc,i_{n-k}\}$ be 
	the set of anti-excedances of some critical permutation $\pi$.
	Let $h \in [n]$ be the largest integer such that $j_h < i_{n-k-1}$.
	If there is no such $h$, then $E = \{n-k,\dotsc,n-2,n-1\}$
	and $\pi$ is of the form given in \eqref{eq:specialCase}
	and the statement holds.
	So now, there is some $m \in [n-k]$ such that $j_h+1=i_{n-k-m}$,
	and the permutation $\pi$ has the following structure:
	\[
	\pi =
	\begingroup
	\setcounter{MaxMatrixCols}{20}
	\setlength\arraycolsep{2pt}
	\scriptstyle{
		\begin{pmatrix}
			1      & 2      & \dotsb & j_h & i_{n-k-m}  & \dotsb   & i_{n-k-1}      & j_{h+1} & \dotsb    & j_{k} &  n \\
			\pi(1) & \pi(2) & \dotsb & \pi(j_h) & \pi(i_{n-k-m}) & \dotsb & \pi(i_{n-k-1}) & \pi(j_{h+1}) & \dotsb & \pi(j_{k})  & \pi(n)
	\end{pmatrix}}.
	\endgroup
	\]
	We have that
	\[
	j_{h+1} < \pi(j_{h+1}), \quad j_{h+2}<\pi(j_{h+2}), \quad j_{k}<\pi(j_{k}),
	\]
	and taking \eqref{eq:excSorted} into account, this is only possible if 
	\[
	\pi(j_{h+1}) = j_{h+2},\quad \pi(j_{h+2}) = j_{h+3},\quad \dotsc, \quad \pi(j_{k}) = n,
	\]
	or $h=k$ and $\pi(j_h)=n$.

	\textbf{Suppose now $\pi(j_h)>i_{n-k-1}$}. Then by \eqref{eq:excSorted},
	either $\pi(j_h)=j_{h+1}$, or $h=k$ and $\pi(j_h)=n$.
	In either case, we must have that
	$\pi(i_{n-k})\leq j_{h+1}-1= i_{n-k-1}$ then $\pi(n)= i_{n-k-1}$ by \eqref{eq:excSorted},
	 and $\pi(i_{n-k-1}) < i_{n-k-1}$.
	But this is not possible, as $\pi$ would not be critical (we could swap
	the values at positions $n-k-1$ and $n$).
	Hence, $\pi(j_h) \leq i_{n-k-1}$.
	\medskip 
	
	It follows that $\pi$ sends $\{1,2,\dotsc,i_{n-k-1}\}$ to itself
	and that $\pi(n) = i_{n-k-1} + 1$. Hence,
	the value of $\pi(s)$ is uniquely determined for all
	$s > i_{n-k-1}$.
	So, $\pi$ restricts to a critical permutation $\pi'$
	acting on $[i_{n-k-1}]$.
	By induction, $\pi'$ is uniquely determined by $E \cap [i_{n-k-1}]$ with 
	so it follows that $\pi$ is unique and of the form $\pi_E$.
	Also by induction, $\inv(\pi') = h = |E \cap [i_{n-k-1}]|$,
	and finally 
	\[
	\inv(\pi) = \inv(\pi') + (k-h) = h + (k-h) = |E|.
	\]
\end{proof}

\begin{example}
 Let $n=4$ and consider all permutations with $2$ excedances.
 We have $7$ even permutations with two excedances,
 and $4$ odd permutations. The sign-reversing involution should therefore have 
 $\binom{3}{2}=3$ fixed-points, all with even sign $(-1)^2$.
 \begin{center}
\begin{tabular}{cc}
Even & Odd \\
\toprule
$\mathbf{1342}$ & \\
$\mathbf{2143}$ & \\
$\mathbf{2314}$ & \\
2431 & 2413  \\
3241 & 3142 \\
3412 & 3421 \\
4321 & 4312 \\
\end{tabular}
\end{center}
\end{example}

\cref{prop:mantaciInvolution} now immediately gives a bijective proof of the following result, which is essentially due to R.Mantaci in \cite{Mantaci1993} (albeit stated in terms of anti-excedances instead of excedances).
\begin{proposition}\label{lem:excEvenOdd}
Let $n \geq 1$, then
\begin{equation}\label{eq:excEvenOdd}
 \sum_{\substack{\pi \in \symS_n}} (-1)^{\inv(\pi)} \xvec_{\EXCi(\pi)} = 
 \prod_{j \in [n-1]} (1-x_j) = \sum_{E \subseteq [n-1]} (-1)^{|E|}\xvec_{E}.
\end{equation}
In particular, by setting all $x_i$ equal to $t$, 
we have
\[
 \sum_{\substack{\pi \in \symS^e_n}} t^{\exc(\pi)} 
 -
 \sum_{\substack{\pi \in \symS^o_n }} t^{\exc(\pi)} = (1-t)^{n-1}.
\]
\end{proposition}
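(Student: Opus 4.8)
The plan is to derive \cref{lem:excEvenOdd} directly from \cref{prop:mantaciInvolution} by running the sign-reversing involution $\iota$ over all of $\symS_n$ and collecting the surviving terms. First I would note that properties (1) and (2) listed before the definition of $\iota$ tell us that $\iota$ preserves $\EXCi$ and reverses sign whenever $\iota(\pi)\neq\pi$. Hence in the signed sum $\sum_{\pi\in\symS_n}(-1)^{\inv(\pi)}\xvec_{\EXCi(\pi)}$, every pair $\{\pi,\iota(\pi)\}$ with $\iota(\pi)\neq\pi$ contributes two terms with the same monomial $\xvec_{\EXCi(\pi)}$ but opposite signs, so they cancel. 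Therefore the sum collapses to a sum over the critical permutations (the fixed points of $\iota$):
\[
 \sum_{\pi\in\symS_n}(-1)^{\inv(\pi)}\xvec_{\EXCi(\pi)}
 = \sum_{\substack{\pi\in\symS_n \\ \pi \text{ critical}}} (-1)^{\inv(\pi)}\xvec_{\EXCi(\pi)}.
\]

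Next I would invoke \cref{prop:mantaciInvolution}, which is the real engine: for each $E\subseteq[n-1]$ there is a \emph{unique} critical permutation $\pi_E$ with $\EXCi(\pi_E)=E$, and moreover $\inv(\pi_E)=|E|$. This sets up a bijection between critical permutations and subsets $E$ of $[n-1]$. Substituting $\EXCi(\pi_E)=E$ and $(-1)^{\inv(\pi_E)}=(-1)^{|E|}$ into the collapsed sum immediately yields
\[
 \sum_{\substack{\pi\in\symS_n \\ \pi \text{ critical}}} (-1)^{\inv(\pi)}\xvec_{\EXCi(\pi)}
 = \sum_{E\subseteq[n-1]} (-1)^{|E|}\xvec_{E},
\]
which is the third expression in \eqref{eq:excEvenOdd}. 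The middle expression $\prod_{j\in[n-1]}(1-x_j)$ then follows by the standard expansion of a product of binomials, since $\prod_{j\in[n-1]}(1-x_j)=\sum_{E\subseteq[n-1]}\prod_{j\in E}(-x_j)=\sum_{E\subseteq[n-1]}(-1)^{|E|}\xvec_E$ by the definition of $\xvec_E$.

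For the final specialization, I would set every $x_i$ equal to $t$. On the left-hand side, $\xvec_{\EXCi(\pi)}$ becomes $t^{\exc(\pi)}$, and splitting the signed sum by parity turns $\sum_\pi(-1)^{\inv(\pi)}t^{\exc(\pi)}$ into $\sum_{\pi\in\symS^e_n}t^{\exc(\pi)}-\sum_{\pi\in\symS^o_n}t^{\exc(\pi)}$; on the right-hand side, $\prod_{j\in[n-1]}(1-x_j)$ becomes $(1-t)^{n-1}$. I do not expect any genuine obstacle here, since all the hard combinatorial work — establishing uniqueness of the critical representative and the identity $\inv(\pi_E)=|E|$ — has already been carried out in \cref{prop:mantaciInvolution}. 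The only point that requires a line of care is confirming that $\iota$ is genuinely an involution and that its fixed-point set is exactly the set of critical permutations, but this is immediate from the definition of $\iota$ together with the characterization of criticality in \cref{cor:critical}.
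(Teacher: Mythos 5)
Your proposal is correct and is essentially the paper's own argument: the paper states that \cref{prop:mantaciInvolution} ``immediately gives a bijective proof'' of \cref{lem:excEvenOdd}, and what you have written is precisely the spelled-out version of that — cancel non-fixed points of $\iota$ in pairs using properties (1) and (2), then use the uniqueness of the critical permutation $\pi_E$ with $\inv(\pi_E)=|E|$ for each $E\subseteq[n-1]$ to identify the surviving sum with $\sum_{E\subseteq[n-1]}(-1)^{|E|}\xvec_E$, and finally expand the product and specialize $x_i\to t$.
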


\begin{proposition}\label{prop:main}
Let $n\geq 1$ and let $T \subseteq [n]$. 
Let $m \leq n$ be the largest integer not in $T$ and set
$
  E = \{1,2,\dotsc,m-1\} \setminus T.
$
Then
 \begin{equation}\label{eq:main}
 \sum_{\substack{\pi \in \symS_n  \\ T \subseteq \FIX(\pi)  }} 
 (-1)^{\inv(\pi)} \xvec_{\EXCi(\pi)} =  
 \prod_{j \in E} (1-x_j),
\end{equation}
where the empty product has value $1$.

Setting all $x_i$ to be $t$, we have
\begin{equation}
 \sum_{\substack{\pi \in \symS^e_n  \\ T \subseteq \FIX(\pi)  }} t^{\exc(\pi)}
 -
 \sum_{\substack{\pi \in \symS^o_n \\ T \subseteq \FIX(\pi)   }} t^{\exc(\pi)}
 = 
 \begin{cases}
 1 & \text{if $|T|=n$} \\
 (1-t)^{n-1-|T|} & \text{otherwise.}
 \end{cases}
\end{equation}
\end{proposition}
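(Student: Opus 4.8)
The statement generalizes \cref{lem:excEvenOdd}, which is exactly the case $T=\emptyset$. The idea is to fix the values of $\pi$ on the set $T$ (they must all be fixed points), and sum over the remaining positions. The key observation is that the positions in $T$ contribute neither to the excedance set nor to any inversion with each other; the real combinatorial action happens on the complement $[n]\setminus T$. So the plan is to set up a sign-reversing involution on $\{\pi \in \symS_n : T\subseteq \FIX(\pi)\}$ analogous to Mantaci's $\iota$, restricted so that it never touches the positions in $T$, and then identify the surviving (critical) terms.

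First I would reduce to an involution argument. I would define a map $\iota_T$ that mimics $\iota$ but performs swaps only among positions in $[n]\setminus T$, selecting the lexicographically maximal admissible pair $(l,m)$ with $2\le l<m\le n$, $l,m\notin T$, and $l,m$ both excedances or both anti-excedances. Since a fixed point $i\in T$ satisfies $\pi(i)=i$ (an anti-excedance with $\pi(i)=i$, not a strict one), the swaps never disturb $T$, and this $\iota_T$ is a sign-reversing involution on the summation set that preserves $\EXCi$. All terms cancel except the \textbf{critical} permutations, which I would characterize exactly as in \cref{cor:critical}: those $\pi$ with $T\subseteq\FIX(\pi)$ whose excedance and anti-excedance values are sorted as in \eqref{eq:excSorted}, subject to the constraint that the positions in $T$ are forced to be fixed points.

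Next I would count the critical survivors. The constraint $i\in T\implies \pi(i)=i$ interacts with the sorted structure \eqref{eq:excSorted}: a forced fixed point is an anti-excedance with $\pi(i)=i$, and in the chain $\pi(i_1)\le i_1<\pi(i_2)\le i_2<\dotsb$ this pins down consecutive anti-excedance values. The cleanest route is to argue that the critical permutations with $T\subseteq\FIX(\pi)$ are in bijection with subsets $E$ of a restricted index set — precisely $E\subseteq\{1,\dotsc,m-1\}\setminus T$ where $m$ is the largest integer not in $T$ — each such $E$ giving a unique critical $\pi_E$ via the explicit formula in \cref{prop:mantaciInvolution}, now adapted so that the elements of $T$ are skipped over as forced fixed points. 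Each critical $\pi_E$ contributes $(-1)^{\inv(\pi_E)}\xvec_E = (-1)^{|E|}\xvec_E$, and summing over all admissible $E$ yields $\prod_{j\in E}(1-x_j)$ as claimed. The final specialization $x_i\mapsto t$ is immediate, giving $(1-t)^{n-1-|T|}$ when $|T|<n$ and the empty product $1$ when $|T|=n$.

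\textbf{Main obstacle.} The delicate point is the exact characterization of which critical permutations respect $T\subseteq\FIX(\pi)$, and why the relevant index set is $\{1,\dotsc,m-1\}\setminus T$ with $m$ the largest non-fixed index rather than all of $[n-1]\setminus T$. The subtlety is that once we demand the largest available position $m\notin T$ behaves as the "top" of the sorted chain, every index above $m$ lies in $T$ and is forced to be a fixed point, so it can never be an excedance and contributes no free choice; this is precisely what truncates the product at $m-1$. I would verify this by carefully running the induction of \cref{prop:mantaciInvolution} on $[m]$ rather than $[n]$, treating the forced fixed points in $T$ as inert positions that the recursion passes through, and checking the inversion count $\inv(\pi_E)=|E|$ survives the restriction since fixed points in $T$ create no inversions among themselves or with the sorted excedance block.
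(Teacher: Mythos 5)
Your overall strategy (a $T$-avoiding version of Mantaci's involution whose survivors are indexed by subsets of $\{1,\dotsc,m-1\}\setminus T$) could in principle be made to work, but two of your concrete claims are false, and they sit exactly at the delicate points you yourself flagged. First, the characterization of the survivors is wrong. A permutation that is critical for your restricted map $\iota_T$ need \emph{not} satisfy \eqref{eq:excSorted}: criticality for $\iota_T$ only forbids swappable pairs among positions \emph{outside} $T$, so the chains need only be sorted along $[n]\setminus T$. Concretely, take $n=3$, $T=\{2\}$, $\pi=321$. Then $T\subseteq\FIX(\pi)$ and the only candidate pair of positions outside $T$ is $(1,3)$, which is inadmissible (the positions have different types, and $i\geq 2$ fails), so $\pi$ survives $\iota_T$; yet its anti-excedance chain in \eqref{eq:excSorted} reads $\pi(2)=2\leq 2<\pi(3)=1$, which is false. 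Conversely, the permutations fixing $T$ that do satisfy \eqref{eq:excSorted} literally are too few: for this $T$ only the identity qualifies, so your characterization would produce $1$ instead of the correct value $1-x_1$. The survivors are strictly more numerous than the $\iota$-critical permutations fixing $T$; the correct statement is that a survivor is a permutation whose \emph{restriction to the positions of $[n]\setminus T$}, read in relative order, is critical in $\symS_{n-|T|}$.

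Second, the inversion count does not survive the restriction: for the survivor $\pi=321$ above (your adapted $\pi_E$ with $E=\{1\}$) one has $\inv(\pi)=3\neq 1=|E|$, and your justification --- that fixed points in $T$ create no inversions with the rest --- is false, since the fixed value $2$ in $321$ forms an inversion with each of its neighbours. What is true, and what the argument actually needs, is only the parity statement $(-1)^{\inv(\pi)}=(-1)^{|E|}$; it holds because a fixed point $i$ participates in an \emph{even} number of inversions (the number of $j<i$ with $\pi(j)>i$ equals the number of $j>i$ with $\pi(j)<i$). This parity lemma is the missing ingredient, and it is nowhere in your proposal. Once you have it, together with the observation that excedance positions correspond under the order-preserving bijection between $[n]\setminus T$ and $[n-|T|]$, you no longer need a restricted involution at all: deleting the positions of $T$ reduces the sum to one over $\symS_{n-|T|}$ with reindexed variables, and \cref{lem:excEvenOdd} finishes the proof. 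That deletion argument is precisely the paper's proof, and it is the natural repair of your approach.
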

\begin{proof}
First note that $E = \emptyset$ if $T=[n]$ and \eqref{eq:main} is easy to verify,
so from now on, we may assume $|T| < n$.

By definition of $m$, we have that $T = T_1 \cup T_2$
where $T_1 \subseteq \{1,2,\dotsc,m-1\}$, and $T_2 = \{m+1,m+2,\dotsc,n\}$.
Hence, $|E|+|T_1| = m-1$ and $|T_2| = n-m$, and 
\[
|E| = n-1-|T_1|-|T_2| = n-1-|T|.
\]
Now suppose $\pi \in \symS_n$ is a permutation such that $T \subseteq \FIX(\pi)$.
We then construct $\pi' \in \symS_{n-|T|}$, by only considering the positions not in $T$,
and the relative ordering of the entries at these positions.
For example, if $n=9$, $T=\{2,4,6,8,9\}$,
$[n] \setminus T = \{1,3,5,7\}$ and
\[
  \pi = \underline{1} 2 \underline{7} 4 \underline{3} 6 \underline{5} 8 9,
  \text{ we have } \pi' = 1423
\]
since the relative ordering of $1,3,5$ and $7$ in $\pi$ is $1423$.
Observe that $\exc(\pi) = \exc(\pi')$ and $(-1)^{\inv(\pi)} = (-1)^{\inv(\pi')}$.

Hence, the sum in the left-hand side of \eqref{eq:main},
can be taken as a sum over permutations $\pi' \in \symS_{n-|T|}$,
but with a reindexing of the variables using values in $[n] \setminus T$.
Now, this sum can be computed using \cref{lem:excEvenOdd}
which finally gives \eqref{eq:main}.
Note that $m$ is the largest member of $[n] \setminus T$, 
so we do not get any variable with this index --- this corresponds
to the fact that the right-hand side of \eqref{eq:excEvenOdd}
only uses elements in $[n-1]$.
\end{proof}

\begin{theorem}\label{thm:mainGenFunc}
Let $n\geq 1$. Then
\begin{equation}\label{eq:derangementSum}
 \sum_{\pi \in \dArr_n } (-1)^{\inv(\pi)} \xvec_{\EXCi(\pi)} = 
 (-1)^{n-1} \sum_{j=1}^{n-1} x_{1}x_{2}\dotsm x_{j}.
\end{equation}
\end{theorem}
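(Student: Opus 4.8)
The plan is to derive \eqref{eq:derangementSum} from \cref{prop:main} by inclusion–exclusion over fixed points. The starting observation is that for any $\pi \in \symS_n$ we have $\sum_{T \subseteq \FIX(\pi)} (-1)^{|T|} = [\FIX(\pi)=\emptyset]$, since this inner sum equals $0^{\fix(\pi)}$. Inserting this indicator into the full symmetric-group sum and exchanging the order of summation, I would write
\begin{equation*}
 \sum_{\pi \in \dArr_n} (-1)^{\inv(\pi)} \xvec_{\EXCi(\pi)}
 = \sum_{T \subseteq [n]} (-1)^{|T|}
 \sum_{\substack{\pi \in \symS_n \\ T \subseteq \FIX(\pi)}}
 (-1)^{\inv(\pi)} \xvec_{\EXCi(\pi)},
\end{equation*}
so that each inner sum is precisely the left-hand side of \eqref{eq:main}.

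Next I would apply \cref{prop:main} to rewrite each inner sum as $\prod_{j \in E(T)}(1-x_j)$, where $m(T)$ is the largest integer not in $T$ and $E(T) = \{1,\dotsc,m(T)-1\} \setminus T$. The key step is to organize the outer sum by the value of $m = m(T)$. Separating off the single term $T=[n]$, which contributes $(-1)^n$ since then $E = \emptyset$, every remaining $T$ is uniquely of the form $T = T_1 \cup \{m+1,\dotsc,n\}$ with $m \notin T$ and $T_1 \subseteq \{1,\dotsc,m-1\}$, and in that case $E(T) = \{1,\dotsc,m-1\} \setminus T_1$ and $|T| = |T_1| + (n-m)$. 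Grouping by $m$ then gives
\begin{equation*}
 \sum_{m=1}^{n} (-1)^{n-m}
 \sum_{T_1 \subseteq \{1,\dotsc,m-1\}} (-1)^{|T_1|}
 \prod_{j \in \{1,\dotsc,m-1\} \setminus T_1} (1-x_j).
\end{equation*}

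The heart of the computation is that, for each fixed $m$, the inner sum over $T_1$ factors across the elements of $\{1,\dotsc,m-1\}$: every index $j$ either lies in $T_1$, contributing $-1$, or in its complement, contributing $(1-x_j)$, so the sum equals $\prod_{j=1}^{m-1}\big((-1)+(1-x_j)\big) = (-1)^{m-1} x_1 \dotsm x_{m-1}$. Hence the $m$-th group collapses to $(-1)^{n-1} x_1 \dotsm x_{m-1}$, and adding back the separated $T=[n]$ term yields
\begin{equation*}
 (-1)^{n-1}\big(1 + x_1 + x_1 x_2 + \dotsb + x_1 \dotsm x_{n-1}\big) + (-1)^n.
\end{equation*}
Since $(-1)^{n-1} + (-1)^n = 0$, the two stray constant terms cancel and what remains is exactly $(-1)^{n-1}\sum_{j=1}^{n-1} x_1 \dotsm x_j$, as claimed. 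I do not anticipate a genuine obstacle: the only points requiring care are the bookkeeping of the $T=[n]$ edge case and the empty-product conventions at $m=1$, and these are precisely what arrange for the constant terms to cancel.
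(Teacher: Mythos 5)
Your proposal is correct and follows essentially the same route as the paper: inclusion--exclusion over fixed-point sets $T \subseteq [n]$, an application of \cref{prop:main}, and a regrouping of the outer sum by the largest element $m$ not in $T$, with the $T=[n]$ term cancelling against the $m=1$ contribution. The only difference is cosmetic: where the paper evaluates the resulting alternating sum by expanding $\prod_{j\in E}(x_j-1)$ into monomials and invoking the vanishing of $\sum_{F \subseteq E \subseteq [k]}(-1)^{|E|}$, you factor it directly as $\prod_{j=1}^{m-1}\bigl((-1)+(1-x_j)\bigr) = (-1)^{m-1}x_1\dotsm x_{m-1}$, which is a slightly slicker way to reach the same identity.
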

\begin{proof}
By inclusion-exclusion, we have the two identities:
\begin{align*}
 \sum_{\substack{\pi \in \symS^e_n \\ \FIX(\pi)=\emptyset}} \xvec_{\EXCi(\pi)}
 &=
 \sum_{\substack{T \subseteq [n]  }}
 (-1)^{|T|} \sum_{\substack{\pi \in  \symS^e_n  \\ T \subseteq \FIX(\pi)   }} \xvec_{\EXCi(\pi)},\\
 \sum_{\substack{\pi \in \symS^o_n \\ \FIX(\pi)=\emptyset}} \xvec_{\EXCi(\pi)}
 &=
 \sum_{\substack{T \subseteq [n]   }}
 (-1)^{|T|} \sum_{\substack{\pi \in  \symS^o_n  \\ T \subseteq \FIX(\pi)   }} \xvec_{\EXCi(\pi)}.
\end{align*}
By taking the difference of these two identities, we get
\begin{align*}
 \sum_{\substack{\pi \in \dArr_n }} (-1)^{\inv(\pi)} \xvec_{\EXCi(\pi)}
 &=
\sum_{\substack{T \subseteq [n] }}
 (-1)^{|T|}
\left( 
 \sum_{\substack{\pi \in  \symS^e_n  \\ T \subseteq \FIX(\pi)   }} \xvec_{\EXCi(\pi)} 
 -
 \sum_{\substack{\pi \in  \symS^o_n  \\ T \subseteq \FIX(\pi)   }} \xvec_{\EXCi(\pi)}
\right). 
\end{align*}
By \cref{prop:main}, the difference in the right-hand side is equal to 
\[
  \prod_{ j \in [m_T-1]\setminus T } (1-x_j)
\]
where $m_T\leq n$ is the largest integer not in $T$.
We group the terms depending on the value of $m_T$.
If $m_T=0$ then $T=[n]$ and the product is empty, so its value is $1$.
In general, the left hand side of \eqref{eq:derangementSum} is equal to
\[
(-1)^{n} + 
 \sum_{k=1}^n \sum_{\substack{T \subseteq [n] \\ m_T = k}} (-1)^{|T|}
 \prod_{ j \in [k-1]\setminus T } (1-x_j). 
 \]
By using $E = [k-1]\setminus T$, this can then be expressed as
 \[
 (-1)^{n} +
 \sum_{k=1}^n 
 \sum_{\substack{E \subseteq [k-1]}} (-1)^{n-1-|E|}
 \prod_{ j \in E } (1-x_j).
\]
Canceling the $k=1$ case with $(-1)^{n}$, and then shifting the index, we get
 \[
(-1)^{n-1}
 \sum_{k=1}^{n-1} 
 \sum_{\substack{E \subseteq [k]}} 
 \prod_{ j \in E } (x_j-1).
\]
Now,
\begin{align*}
(-1)^{n-1}
\sum_{k=1}^{n-1} 
\sum_{\substack{E \subseteq [k]}} 
\prod_{ j \in E } (x_j-1)
&=
(-1)^{n-1}
\sum_{k=1}^{n-1} 
\sum_{\substack{E \subseteq [k]}} 
\sum_{F \subseteq E} (-1)^{|E|-|F|}\xvec_F \\
&=
(-1)^{n-1}
\sum_{k=1}^{n-1} 
\sum_{\substack{F \subseteq [k]}} 
(-1)^{|F|}\xvec_F
\sum_{F \subseteq E \subseteq [k]} (-1)^{|E|}.
\end{align*}
The last sum vanish unless $F=[k]$, and we have that 
\begin{equation}
(-1)^{n-1}
\sum_{k=1}^{n-1} 
\sum_{\substack{E \subseteq [k]}} 
\prod_{ j \in E } (x_j-1)
=
(-1)^{n-1}\sum_{k=1}^{n-1} \xvec_{[k]},
\end{equation}
which is exactly the right-hand side in \eqref{eq:derangementSum}.
\end{proof}

\begin{corollary}
For $n,k \geq 1$, we have that
\[
|\{ \pi \in \dArr^e_n : \exc(\pi) = k \}|-|\{ \pi \in \dArr^o_n : \exc(\pi) = k \}| = (-1)^{n-1}.
\]
\end{corollary}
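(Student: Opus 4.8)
The plan is to obtain the corollary by specializing \cref{thm:mainGenFunc} to a single indeterminate and then reading off the statement as a coefficient. Since the theorem already gives the full multivariate generating function $\sum_{\pi \in \dArr_n} (-1)^{\inv(\pi)} \xvec_{\EXCi(\pi)}$ in closed form, no new combinatorial input is needed; the work is purely a substitution followed by coefficient extraction.

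First I would recall that $(-1)^{\inv(\pi)} = \sgn(\pi)$, so that setting $x_i = t$ for every $i \in [n]$ in \eqref{eq:derangementSum} collapses the monomial $\xvec_{\EXCi(\pi)}$ to $t^{|\EXCi(\pi)|} = t^{\exc(\pi)}$, and splits the signed sum over $\dArr_n$ according to the parity of $\pi$. The right-hand side of \eqref{eq:derangementSum} becomes $(-1)^{n-1}(t + t^2 + \dotsb + t^{n-1})$. This yields the one-variable identity
\[
\sum_{\pi \in \dArr^e_n} t^{\exc(\pi)} \;-\; \sum_{\pi \in \dArr^o_n} t^{\exc(\pi)} \;=\; (-1)^{n-1} \sum_{j=1}^{n-1} t^{j}.
\]

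I would then compare the coefficient of $t^k$ on both sides. On the left it is exactly $|\{ \pi \in \dArr^e_n : \exc(\pi) = k \}| - |\{ \pi \in \dArr^o_n : \exc(\pi) = k \}|$, and on the right it equals $(-1)^{n-1}$ for each $k$ with $1 \le k \le n-1$, which is the claimed value. The only point deserving a remark is the range of $k$: since $\pi(n) \le n$ forces position $n$ never to be an excedance, a derangement of $[n]$ has between $1$ and $n-1$ excedances, so for $k \ge n$ both counts vanish and the nonzero content of the statement lives precisely on $1 \le k \le n-1$, matching \eqref{eq:mainEquation}.

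There is essentially no obstacle here: the entire argument is the specialization $x_i \mapsto t$ in \cref{thm:mainGenFunc} together with an elementary comparison of coefficients, so the proof is a short consequence of the theorem rather than an independent result.
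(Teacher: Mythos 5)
Your proposal is correct and matches the paper's own proof, which likewise obtains the corollary by specializing \cref{thm:mainGenFunc} (i.e., setting every $x_i$ to $t$ in \eqref{eq:derangementSum}) and comparing coefficients of $t^k$. Your additional remark about the range $1 \le k \le n-1$ is a sensible clarification, since for $k \ge n$ both counts vanish and the identity only carries content in that range, exactly as in \eqref{eq:mainEquation}.
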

\begin{proof}
 This follows directly by comparing
 coefficients of degree $k$ in \eqref{eq:derangementSum}.
\end{proof}

\subsection{A right-to-left minima analog}

Our goal with this subsection is to prove 
a right-to-left minima analog of \cref{lem:excEvenOdd}.
We shall use the same type of proof, i.e., by exhibiting an involution on $\symS_n$,
such that all fixed-elements with the same set of right-to-left minima,
also have the same sign.

\begin{definition}
Let $\kappa : \symS_n \to \symS_n$ be defined as follows.
Given $\pi \in \symS_n$, let $i \in [n]$ be the smallest \emph{odd} integer 
such that $\pi (i \; i+1)$ and $\pi$ have the same sets of right-to-left minima,
if such an $i$ exists. That is, we swap the entries at positions $i$ and $i+1$ in $\pi$.
We then set $\defin{\kappa(\pi)} \coloneqq \pi (i \; i+1)$,
and $\kappa(\pi) \coloneqq \pi$ otherwise.
We say that $\pi$ is \defin{decisive}\footnote{As a nod to the word \emph{critical}.}
if it is a fixed-point of $\kappa$.
\end{definition}

\begin{example}
In $\symS_7$, there are $8$ decisive permutations:
\[
1234567,\;
1234657,\;
1243567,\;
1243657,\;
2134567,\;
2134657,\;
2143567,\;
2143657.
\]
Note that $\{1,3,5,7\}$ are always right-to-left minima (but there might be more).
\end{example}

\begin{lemma}\label{lem:ltrMinEvenOdd}
The map $\kappa : \symS_n \to \symS_n$ has the following properties:
\begin{enumerate}[label=({\roman*})]
 \item \label{it:kappaprop1} $\kappa$ is an involution,
 \item \label{it:kappaprop2} $\kappa$ preserves the number of right-to-left minima,
 \item \label{it:kappaprop3} $\kappa$ changes sign of non-fixed elements,
\item \label{it:kappaprop4} 
 For each subset $T \in [n] \cap \{2,4,6,\dotsc\}$,
 there is a unique decisive permutation with 
 $\{1,3,5,\dotsc\} \cup T$ as right-to-left minima set.
  
 \item \label{it:kappaprop5} 
 there are $\binom{ \lfloor n/2 \rfloor }{k - \lceil n/2 \rceil}$
 decisive permutations with exactly $k$ right-to-left minima,
 and they all have sign $(-1)^{n-k}$.
 \end{enumerate}
\end{lemma}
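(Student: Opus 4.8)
The crux of the whole lemma is a local analysis of how an adjacent swap affects the set of right-to-left-minima \emph{values}. Fix $\pi\in\symS_n$ and an odd $i$ with $i+1\le n$, write $a=\pi(i)$, $b=\pi(i+1)$, and let $M_i\coloneqq\min\{\pi(i+2),\dots,\pi(n)\}$ (with $M_i\coloneqq+\infty$ when $i+1=n$). Swapping the entries at positions $i$ and $i+1$ leaves the multiset of values to the right of any position $j\notin\{i,i+1\}$ unchanged, so it can only alter the right-to-left-minima status of positions $i$ and $i+1$ themselves. I would run through the six orderings of $a$, $b$, $M_i$ and record in each case which of $a,b$ lands in $\RLMv$ before and after the swap. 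The outcome is the clean criterion
\[
\RLMv(\pi\,(i\; i{+}1)) = \RLMv(\pi) \iff \max(a,b) > M_i,
\]
i.e.\ the swap is \emph{forbidden} precisely when both $a$ and $b$ are smaller than every entry to their right. This single computation is the engine for everything that follows.

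With it in hand, \ref{it:kappaprop1}--\ref{it:kappaprop3} are short. For \ref{it:kappaprop3}, $\kappa(\pi)$ differs from $\pi$ by a transposition whenever it moves $\pi$, so the sign flips; \ref{it:kappaprop2} is immediate since a nontrivial step preserves the whole set $\RLMv$. For the involution \ref{it:kappaprop1}: if $i$ is the smallest admissible odd position for $\pi$ and $\sigma\coloneqq\pi\,(i\; i{+}1)$, then for every odd $j<i$ the positions $j,j+1$ lie to the left of the swap, so $\sigma$ and $\pi$ agree there and have identical tail multisets from position $j+2$ onward; by the criterion above, $j$ stays forbidden for $\sigma$. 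Moreover $i$ is admissible for $\sigma$ because $\sigma\,(i\; i{+}1)=\pi$. Hence $i$ is again the smallest admissible position for $\sigma$ and $\kappa(\sigma)=\pi$.

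The heart of the argument is the structural description behind \ref{it:kappaprop4} and \ref{it:kappaprop5}. Setting $p\coloneqq\lfloor n/2\rfloor$ and grouping positions into blocks $B_r\coloneqq\{2r-1,2r\}$ for $1\le r\le p$ (with a leftover position $n$ when $n$ is odd), a permutation is decisive exactly when the swap on each $B_r$ is forbidden, i.e.\ both entries of $B_r$ are smaller than all entries to their right. I would argue from $B_1$ outward that this forces $B_r$ to carry the value pair $\{2r-1,2r\}$ (and the final position to carry $n$ when $n$ is odd), with each block either increasing or decreasing internally. A direct right-to-left-minima computation on such a permutation then shows that every odd value lies in $\RLMv$, while the even value $2r$ lies in $\RLMv$ if and only if $B_r$ is increasing. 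Thus sending a decisive permutation to its set of increasing blocks yields a bijection onto subsets $T$ of the even numbers in $[n]$, with right-to-left-minima value set exactly $\{1,3,5,\dots\}\cup T$, which is \ref{it:kappaprop4}.

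Finally \ref{it:kappaprop5} drops out: a decisive permutation with $k$ right-to-left minima has $|T|=k-\lceil n/2\rceil$ increasing blocks, so there are $\binom{p}{\,k-\lceil n/2\rceil\,}=\binom{\lfloor n/2\rfloor}{\,k-\lceil n/2\rceil\,}$ of them, and each one is the product of its decreasing blocks as disjoint transpositions, giving sign $(-1)^{p-|T|}=(-1)^{n-k}$. The main obstacle is the block characterization in \ref{it:kappaprop4}: one must combine the local swap criterion across all blocks to pin down simultaneously the value pair assigned to each block and its internal order, and then carry out the right-to-left-minima bookkeeping to read off exactly which even values survive.
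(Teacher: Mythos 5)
Your proposal is correct and takes essentially the same route as the paper: both characterize decisive permutations as products of disjoint blocks carrying the value pairs $\{2r-1,2r\}$ (each block increasing or decreasing), biject them with subsets $T$ of the even values via the increasing blocks, and obtain the count $\binom{\lfloor n/2\rfloor}{k-\lceil n/2\rceil}$ and sign $(-1)^{n-k}$ from the decreasing blocks. The differences are only in execution — your explicit swap criterion $\max(a,b)>M_i$ and left-to-right block argument replace the paper's right-to-left induction on the position of $n$, and you spell out the verification of property (i), which the paper dismisses as clear from the definition.
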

\begin{proof}
Items \ref{it:kappaprop1}--\ref{it:kappaprop3} are clear from the definition of $\kappa$.
It remains to prove \ref{it:kappaprop4} and \ref{it:kappaprop5}.
Let us use $O$ to denote the odd integers in $[n]$, and let $E$
be the even integers in $[n]$.
In order to prove \ref{it:kappaprop4}, we must construct a decisive permutation $\pi$,
such that $\RLMv(\pi) = O \cup T$. We construct $\pi$ from $T$
according to the following rules:
\begin{itemize}
\item if $n$ is odd, then $\pi(n)=n$.
\item if $j \in O$, $j<n$ we have that
\[
\begin{cases}
\pi(j) =j \text{ and } \pi(j+1) = j+1 &\text{ if } j+1 \in T \\
\pi(j) =j+1 \text{ and } \pi(j+1) = j &\text{ if } j+1 \notin T.
\end{cases}
\]
\end{itemize}
In short, $\pi$ is constructed by first placing $1$ and $2$,
in the order determined by $T$, then $3$ and $4$, etc.
By construction, $\RLMv(\pi) = O \cup T$.
Now we must show that a permutation is decisive if and only if it is of this form. From the construction, it is clear that
$\pi (i\; i+1)$ and $\pi$ do not have the same set of right-to-left minima,
for any choice of $i \in O$. Hence, all permutations with this structure are decisive.
\medskip

\textbf{Claim:} \emph{Every decisive permutation has the structure described above}.

First note that the claim is true for $n=1$ and $n=2$, so suppose $n\geq 3$.
Now, if $n$ does not appear among the last two entries of $\pi$,
then $n$ is not a right-to-left minima. Moreover, we can swap $n$ 
with the entry either to its right or to its left, and preserve the 
set of right-to-left minima. 
In particular, if $n$ does not appear among the last two positions,
then $\pi$ is not decisive. Now, if $\pi(n)=n$, we can remove 
the last entry and use induction.
Otherwise, suppose $\pi(n-1)=n$ and $\pi(n)<n-1$. In particular,
$n-1 \notin \RLMv(\pi)$.
It is then possible to swap $n-1$ with one of its neighbors and preserve $\RLMv(\pi)$ in a manner, which shows that $\pi$ is not decisive.  We conclude that if 
$\pi(n) \neq n$, then $\pi(n-1)=n$ and $\pi(n)=n-1$ in order for $\pi$
to be decisive. Now, we may remove the last two entries,
and proceed by induction. This ends the proof of the claim.
\smallskip

From \ref{it:kappaprop4}, we know that in order to 
construct a decisive permutations in $\symS_n$ with $k$ right-to-left minima,
we must include all $\lceil n/2 \rceil$ odd integers in $[n]$,
and pick a subset of size $k  - \lceil n/2 \rceil$,
from the set of even integers in $[n]$.
The subset of even integers has cardinality $\lfloor n/2 \rfloor$.
Hence, we get the advertised formula in 
\ref{it:kappaprop5}, so it suffices to show that all such
decisive permutations have the same sign.
By the claim above, it is evident that 
the sign only depends on the number of right-to-left minima in $\pi$,
and from here, it is straightforward for show that it is indeed
$(-1)^{n-k}$.
\end{proof}

\begin{corollary}\label{cor:rlminsum}
We have that for any $n\geq 1$
\begin{equation}\label{eq:rlminSymSum}
 \sum_{\substack{\pi \in \symS_n   }} (-1)^{\inv(\pi)} \xvec_{\RLMv(\pi)}
 = 
	\big(\prod_{\substack{i \in [n] \\ i \text{ odd}}} x_i\big)
	\big(\prod_{\substack{j \in [n] \\ j \text{ even}}} (x_j-1)\big).
\end{equation}
In particular, for any $k = 1,\dotsc,n$ we have that
\[
 |\{ \pi \in \symS^e_n : \rlm(\pi)=k \}| - 
 |\{ \pi \in \symS^o_n : \rlm(\pi)=k \}| = (-1)^{n-k}
 \binom{ \lfloor n/2 \rfloor }{k - \lceil n/2 \rceil}.
\]
\end{corollary}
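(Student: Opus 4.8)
The plan is to obtain both identities as direct consequences of \cref{lem:ltrMinEvenOdd}, treating $\kappa$ as a sign-reversing involution on the left-hand side of \eqref{eq:rlminSymSum}.

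First I would note that, directly from the definition of $\kappa$, whenever $\kappa(\pi)\neq \pi$ the permutations $\pi$ and $\kappa(\pi)$ have the \emph{same} set of right-to-left minima, and hence yield the same monomial $\xvec_{\RLMv(\pi)}$; by property \ref{it:kappaprop3} they carry opposite signs $(-1)^{\inv}$. Consequently every non-decisive permutation cancels against its $\kappa$-partner in the sum $\sum_{\pi\in\symS_n}(-1)^{\inv(\pi)}\xvec_{\RLMv(\pi)}$, which therefore collapses to a sum over decisive permutations alone.

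Next I would evaluate this surviving sum using the classification in \ref{it:kappaprop4}. Writing $O$ and $E$ for the odd and even integers in $[n]$, the decisive permutations are indexed by subsets $T\subseteq E$, the one associated with $T$ having $\RLMv = O\cup T$ and, by \ref{it:kappaprop5}, sign $(-1)^{n-k}$ with $k = |O\cup T| = \lceil n/2\rceil + |T|$, i.e. sign $(-1)^{\lfloor n/2\rfloor-|T|}$. Pulling out the factor $\xvec_O$ and summing over $T\subseteq E$ gives $\xvec_O\sum_{T\subseteq E}(-1)^{|E|-|T|}\xvec_T = \xvec_O\prod_{j\in E}(x_j-1)$, which is precisely the right-hand side of \eqref{eq:rlminSymSum}.

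Finally, for the ``in particular'' statement I would specialize every $x_i$ to a single variable $t$. The left-hand side becomes $\sum_{k}\big(|\{\pi\in\symS^e_n:\rlm(\pi)=k\}|-|\{\pi\in\symS^o_n:\rlm(\pi)=k\}|\big)t^k$, while the right-hand side becomes $t^{\lceil n/2\rceil}(t-1)^{\lfloor n/2\rfloor}$; expanding the latter by the binomial theorem and reading off the coefficient of $t^k$ (using $\lfloor n/2\rfloor+\lceil n/2\rceil = n$ for the sign) yields the stated binomial formula. Since all of the combinatorial content is already contained in \cref{lem:ltrMinEvenOdd}, no genuine obstacle remains; the only point demanding a little care is the bookkeeping of the sign exponent and the index shift $m = k-\lceil n/2\rceil$ in the binomial coefficient.
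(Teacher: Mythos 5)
Your proposal is correct and follows essentially the same route as the paper: both deduce \eqref{eq:rlminSymSum} from \cref{lem:ltrMinEvenOdd} by cancelling non-decisive permutations under the sign-reversing involution $\kappa$, summing the surviving monomials $\xvec_{O\cup T}$ over subsets $T$ of the even integers to get the product formula, and then obtaining the binomial identity by specializing $x_i \mapsto t$ and comparing coefficients. The only difference is that you spell out the sign bookkeeping $(-1)^{n-k} = (-1)^{\lfloor n/2\rfloor - |T|}$ and the index shift in the binomial expansion, which the paper leaves as ``straightforward.''
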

\begin{proof}
This follows directly from \cref{lem:ltrMinEvenOdd},
where the first product in \eqref{eq:rlminSymSum}
corresponds to the fact that all odd integers in $[n]$ must be 
right-to-left minima for decisive permutations,
and the second product corresponds to choosing a
subset $T$ among the even numbers in $[n]$.
It remains to check that the signs are chosen correctly,
which is straightforward as well.

The second statement also follows from \cref{lem:ltrMinEvenOdd},
or by simply comparing coefficients of $t^k$ in \eqref{eq:rlminSymSum},
after letting $x_j \to t$.
\end{proof}

We conclude this section with the following problem.
\begin{problem}
Is it possible to state an analog of \cref{prop:main}?
In particular, for $T \subseteq [n]$, is there a nice expression for the sum
\[
 \sum_{\substack{\pi \in \symS_n  \\ T \subseteq \FIX(\pi) }}(-1)^{\inv(\pi)} t^{\rlm(\pi)}  ?
\]
Computer experiments suggest that this sum is either $0$ or of the form 
$\pm t^a (t+1)^b (t-1)^c$, where $a$, $b$, and $c$ depend on $T$ in some manner.
\end{problem}

\section{Further ideas and conjectures}

\subsection{Multiderangements}

Let $\defin{B_n} \coloneqq (1,1,2,2,3,3,\dotsc,n,n)$ be fixed.
A \defin{biderangement} of $B_n$, is a permutation, $\wvec$, of the entries in $B_n$, 
such that $\wvec(j) \neq B_n(j)$ for all $j \in [2n]$.
The set of biderangements of $B_n$ is denoted $\defin{\bdArr_n}$.
The cardinality of $\bdArr_n$ is given by \oeis{A000459},
which starts as $0, 1, 10, 297, 13756,\dotsc$.

We compute the number of \defin{inversions} 
of a biderangements as for words in general.
Moreover, we say that $\wvec(j)$ is an \defin{excedance value} of $\wvec \in \bdArr_n$
if $\wvec(j) > B_n(j)$. This defines the multi-set valued statistic $\EXCv(\wvec)$.
We also define the \defin{right-to-left minima values}
as the set 
\[
   \RLMv(\wvec) \coloneqq \{ \wvec_i : \wvec(i) < \wvec(j) \text{ for all $j \in \{i,i+1,\dotsc,2n\}$} \}.
\]

\begin{example}
In the table below, we show the ten elements in $\bdArr_3$,
together with the corresponding inversion and excedance statistics.
\begin{center}
\begin{tabular}{ccccccccc}
 \text{Biderangement} & \text{inv}  & \text{EXCv} & \text{RLMv} & $\quad$ & \text{Biderangement}  & \text{inv} & \text{EXCv} & \text{RLMv} \\
   \toprule \\
 223311&8&\{2,2,3,3\}&\{1\}   &&  231312&7&\{2,3,3\}&\{1,2\}  \\
 231321&8&\{2,3,3\}&\{1\}   &&  233112&8&\{2,3,3\}&\{1,2\}  \\
 233121&9&\{2,3,3\}&\{1\}  &&  321312&8&\{2,3,3\}&\{1,2\}  \\
 321321&9&\{2,3,3\}&\{1\}   &&  323112&9&\{2,3,3\}&\{1,2\}  \\
 323121&10&\{2,3,3\}&\{1\}  &&  331122&8&\{3,3\}&\{1,2\} 
\end{tabular}
\end{center}
\end{example}

\begin{proposition}
For $n\geq 1$ , we have that 
\begin{equation}\label{bidera}
 \sum_{\wvec \in \bdArr_n} (-1)^{\inv(\wvec)} 
 \xvec_{\EXCv(\wvec)} \yvec_{\RLMv(\wvec)}
 = 
 \sum_{\pi \in \dArr_n} \xvec_{\EXCv^2(\pi)} \yvec_{\RLMv(\pi)},
\end{equation}
where $\EXCv^2(\pi)$ is the multiset obtained from $\EXCv(\pi)$ by repeating each element twice.
\end{proposition}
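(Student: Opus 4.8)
The plan is to prove the identity by a sign-reversing involution $\Theta\colon\bdArr_n\to\bdArr_n$ that preserves both statistics $\EXCv$ and $\RLMv$ and whose fixed points are exactly the \emph{doubled derangements}. Think of the $2n$ positions as grouped into $n$ blocks, block $i$ consisting of positions $2i-1,2i$, so that $B_n$ is constant equal to $i$ on block $i$ and the biderangement condition reads $\wvec(2i-1),\wvec(2i)\neq i$. The basic move is to transpose the two entries of a single block. Such a swap is an adjacent transposition, so it changes $\inv$ by $\pm1$ (hence reverses the sign) precisely when the two block entries are distinct. Moreover it automatically preserves the multiset $\EXCv(\wvec)$: both positions of block $i$ share the threshold $i$, and $\wvec(p)$ contributes to $\EXCv$ iff $\wvec(p)>i$, so swapping leaves the pair of contributed values unchanged.

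The only delicate point is $\RLMv$. First I would record the reformulation that, since each value occurs exactly twice, a value $v$ lies in $\RLMv(\wvec)$ iff the \emph{rightmost} of its two occurrences is a right-to-left minimum (the left occurrence can never be one). Writing $M$ for the minimum entry strictly to the right of block $i$, a short case analysis---splitting on how many of the two block values reappear to the right of block $i$---shows that the within-block swap preserves $\RLMv$ \emph{unless} the two entries $a\neq b$ are distinct, both have their rightmost occurrence inside block $i$, and $\max(a,b)<M$; in that and only that \emph{bad} configuration the swap destroys a right-to-left minimum. Call a block \emph{swappable} if its two entries are distinct and it is not bad. I would then define $\Theta$ by locating the smallest-indexed swappable block and transposing its two entries (and $\Theta(\wvec)=\wvec$ if there is none). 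Well-definedness and the involution property rest on the observation that swappability of any block depends only on data invariant under within-block swaps---the multiset of values in each block, which block contains the rightmost occurrence of each value, and the value multiset to the right of the block; consequently swapping the chosen block alters neither its own swappability nor that of any other block, so applying $\Theta$ twice swaps the same block back.

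It remains to identify the fixed points. A fixed point is a biderangement in which every block is either \emph{constant} (its two entries equal) or bad. The key lemma is that a bad block cannot occur: if block $i$ were bad with distinct entries $a,b$, then $a$ does not reappear to the right of block $i$, so its second copy lies in some earlier block $j<i$, and that block $j$---being constant or bad---would force $a$ to occur three times (constant case) or to have its rightmost occurrence in block $j$ rather than in block $i$ (bad case), both impossible. Hence every block of a fixed point is constant, and the map $i\mapsto(\text{common value of block }i)$ is a derangement $\pi\in\dArr_n$; conversely each $\pi\in\dArr_n$ yields the doubled derangement $\wvec_\pi=\pi(1)\pi(1)\pi(2)\pi(2)\cdots\pi(n)\pi(n)$. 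For such $\wvec_\pi$ one computes $\inv(\wvec_\pi)=4\inv(\pi)$, so $(-1)^{\inv(\wvec_\pi)}=1$, while $\EXCv(\wvec_\pi)=\EXCv^2(\pi)$ and $\RLMv(\wvec_\pi)=\RLMv(\pi)$ follow directly from the threshold and rightmost-occurrence descriptions. Summing the surviving fixed-point terms therefore yields exactly $\sum_{\pi\in\dArr_n}\xvec_{\EXCv^2(\pi)}\yvec_{\RLMv(\pi)}$, which is \eqref{bidera}.

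The main obstacle is the $\RLMv$ bookkeeping: pinning down exactly which within-block swaps preserve the right-to-left minima set (the ``bad'' configuration) and then checking that this swappability predicate is invariant under the swaps themselves, so that $\Theta$ is genuinely an involution. The accompanying structural lemma ruling out bad blocks in fixed points is what guarantees that only doubled derangements survive, and hence that the signed sum collapses to the advertised positive sum over $\dArr_n$.
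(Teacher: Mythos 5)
Your proof is correct and is essentially the paper's own argument: the paper's involution $\beta$ likewise swaps the two entries of the \emph{leftmost} block with distinct entries, and since all earlier blocks are then constant, both of those entries must reappear to the right of the block, so your ``bad'' configuration can never arise at the chosen block --- hence your $\Theta$ coincides with $\beta$, and the extra bad/swappable bookkeeping (and the lemma excluding bad blocks from fixed points) is dispensable. Your fixed-point analysis (doubled derangements, $\inv(\wvec_\pi)=4\inv(\pi)$, doubled excedance values, preserved right-to-left minima) matches the paper's.
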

\begin{proof}
Define a mapping $\beta:\bdArr_n\to\bdArr_n$ as $\beta(\wvec)=\wvec'$, where $\wvec'$ is obtained
from $\wvec$ by switching $\wvec(j)$ and $\wvec(j+1)$ for the 
smallest \emph{odd} $j\in [2n-1]$ such that $\wvec(j) \neq \wvec(j+1)$.
If there is no such $j$, then $\beta(\wvec)\coloneqq\wvec$.
Since $\wvec'(j)=\wvec(j+1) \neq B_n(j+1) = B_n(j)$ and $\wvec'(j+1) = \wvec(j) \neq B_n(j)=B_n(j+1)$, 
$\beta$ is a sign-reversing involution which preserves
the excedance set values; $\EXCv(\wvec)=\EXCv(\wvec')$.
Now, each number appear twice in a biderangement,
so $\wvec(j)$ and $\wvec(j+1)$ each appear again somewhere to the right 
of $j$ and $j+1$, respectively.
Hence, the positions $j$ and $j+1$ in $\wvec$ and $\wvec'$
cannot be right-to-left minima indices, and it follows that 
$\RLMv(\wvec)=\RLMv(\wvec')$.

The elements fixed by $\beta$ are bijection with the derangements;
we send $\pi \in \dArr_n$ to the biderangement
$\vvec=\pi(1)\pi(1)\pi(2)\pi(2) \dotsb \pi(n)\pi(n)$.
Consequently, $\EXCv(\vvec)$ is the multiset obtained from 
$\EXCv(\pi)$ by repeating each element twice, 
and $\RLMv(\vvec)=\RLMv(\pi)$. This concludes the proof of \eqref{bidera}.
\end{proof}

\subsection{A generalization}

The set of dearrangements are permutations where cycles of length $1$
are disallowed. With this in mind, it is reasonable to explore what 
happens if we add restrictions to the length of the cycles.
Recall that the \defin{type}, \defin{$\type(\pi)$} of a permutation is the 
integer partition $(\mu_1,\mu_2,\dotsc,\mu_\ell)$
where the parts are the cycle lengths of $\pi$ arranged in decreasing order.

\begin{conjecture}
Let $k$ be a fixed positive integer such that $1\leq k\leq n$.
Then
\[
 \sum_{\substack{\pi \in \symS_n\\ \min(\type(\pi))\geq k}} (-1)^{\inv(\pi)}
 \xvec_{\RLMv(\pi)} \yvec_{ \EXCv(\pi)},
\]
where $\type(\pi)$ is the cycle type of $\pi$, is an element in $\setN[x_1,\dotsc,x_n,y_1,\dotsc,y_n]$.
\end{conjecture}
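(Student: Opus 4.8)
The plan is to pull the sum back to subexcedant functions and attack it with a sign-reversing involution, in direct analogy with the $k=2$ (derangement) case treated in \cref{sec:involution}. Under the bijection $\sefToPerm$, the term indexed by $\pi=\sefToPerm(f)$ carries sign $(-1)^{\aexc(f)}$ by \cref{prop:evenSEF}, excedance weight $\yvec_{[n]\setminus\supp(f)}$ by \cref{prop:excAsIm}, and right-to-left minima weight $\xvec_{\RLMv(f)}$ by \cref{prop:rlmForSEF}. My first step is therefore to characterize the condition $\min(\type(\sigma))\geq k$ directly on $f_\sigma$, generalizing \cref{prop:derangmentSEF}, which is precisely the $k=2$ statement that every fixed point of $f_\sigma$ is multiple. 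I would seek a forbidden-configuration description of short cycles in terms of the multiplicities and relative positions of the repeated values of $f_\sigma$, and let $\DSEF_n^{(k)}\subseteq\SEF_n$ denote the resulting set, so that $\DSEF_n^{(1)}=\SEF_n$ recovers the unrestricted sum and the framework covers the whole range $1\leq k\leq n$ uniformly. The target sum then becomes $\sum_{f\in\DSEF_n^{(k)}}(-1)^{\aexc(f)}\,\xvec_{\RLMv(f)}\,\yvec_{[n]\setminus\supp(f)}$, phrased in terms of three statistics the earlier sections already control.

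The second step is to construct an involution $\fretherD^{(k)}\colon\DSEF_n^{(k)}\to\DSEF_n^{(k)}$ that preserves $\supp$ and $\RLMv$ while changing $\aexc$ by exactly one away from its fixed points, mirroring \cref{cor:fretherDProps}. The map $\fretherD$ of \cref{sec:involution} is built from the local moves $\fixMap_i$ and $\unfixMap_i$, which slide a single value $\img_i$ between $\img_i$ and $\img_{i-1}$; for larger $k$ the short-cycle condition ties several positions together, so I expect the elementary moves to act on a whole block of repeated values and the case analysis by smallest admissible index to expand accordingly. Once such an involution is in place, all non-fixed terms cancel in sign-reversing pairs, and the sum collapses to the generating function of the fixed points, which play the role that the matchless functions of \cref{lem:propertiesOfMatchless} play when $k=2$.

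The final step, and the main obstacle, is to show that this surviving generating function actually lies in $\setN[x_1,\dots,x_n,y_1,\dots,y_n]$. Because each survivor is weighted by $(-1)^{\aexc(f)}$, nonnegativity is equivalent to the statement that, for each monomial $\xvec_S\,\yvec_T$, the fixed points $f$ with $\RLMv(f)=S$ and $\supp(f)=[n]\setminus T$ contribute a nonnegative net sign. I therefore need to describe the fixed points of $\fretherD^{(k)}$ explicitly enough to evaluate $\aexc\bmod 2$ on each of them, and this parity bookkeeping — rather than the mere existence of the involution — is where I expect the genuine difficulty to lie. I would first pin it down in the two transparent regimes, $k=2$, where \cref{thm:theMainTheorem} already records the surviving terms, and $k=n$, where the sum ranges only over single $n$-cycles, before attempting the general interpolation.

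As an independent check and a possible alternative route, I would run inclusion–exclusion over the cycles of length $<k$, in the spirit of \cref{prop:main} and \cref{thm:mainGenFunc}. This expresses the $\min(\type)\geq k$ sum as a signed combination of the unrestricted sums $\sum_{\pi\in\symS_m}(-1)^{\inv(\pi)}\xvec_{\RLMv(\pi)}\yvec_{\EXCv(\pi)}$ over smaller ground sets $[m]$; for the right-to-left minima alone these are computed in \cref{cor:rlminsum}, and obtaining the joint $(\RLMv,\EXCv)$ analogue would already be a useful lemma. Reconciling the resulting signed expansion with manifest membership in $\setN[x_1,\dots,x_n,y_1,\dots,y_n]$ is the same obstacle seen through generating functions, and agreement between the two approaches on small $n$ would be strong evidence before committing to either.
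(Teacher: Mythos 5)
This statement is an open conjecture in the paper: the authors supply no proof, only the remarks that the case $k=2$ follows from \cref{thm:theMainTheorem} and that $k=n$ is trivial because all $n$-cycles share the sign $(-1)^{n-1}$. So there is no proof of the paper's to compare yours against, and your text is --- as you yourself flag at two points --- a program, not a proof. Each of its three steps is left unexecuted, and each is precisely where the open content lies. Step (1), a subexcedant-function criterion for $\min(\type(\sigma))\geq k$, has no analogue in the paper beyond $k=2$: \cref{prop:derangmentSEF} (``every fixed point of $f_\sigma$ is multiple'') encodes only the absence of $1$-cycles, and already excluding $2$-cycles requires global information about how the transpositions in $(n\,\,f(n))\dotsm(1\,\,f(1))$ merge into cycles; no local forbidden-configuration description is given in the paper or evident. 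Step (2) would then have to redo the entire case analysis of \cref{lem:fretherDIsWellDefined} through \cref{lem:involution} --- which is tightly fitted to the $k=2$ criterion --- on a set $\DSEF_n^{(k)}$ you have not actually defined, and nothing guarantees the fixed-point set admits a description as clean as the matchless functions of \cref{lem:propertiesOfMatchless}.

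Step (3) hides a sharper requirement than your ``nonnegative net sign per monomial.'' Even at $k=2$ the surviving terms sum to $(-1)^{n-1}\sum_{j=1}^{n-1}\xvec_{[j]}\yvec_{[n]\setminus[j]}$, so for even $n$ they are all negative: positivity can only hold up to a global sign depending on $n$ (and presumably $k$), and what must be proved is that \emph{all} fixed points of the putative involution carry one and the same sign --- evaluating $\aexc \bmod 2$ on each fixed-point class is not a bookkeeping afterthought but is equivalent to the conjecture itself. (Indeed, as literally stated with coefficients in $\setN$, the conjecture already sits uneasily with the $k=2$, $n$ even case, so sign-coherence is surely the intended reading; your proposal should say so.) Your inclusion--exclusion fallback stalls at a point the paper explicitly leaves open: it needs a joint $(\RLMv,\EXCv)$ analogue of \cref{lem:excEvenOdd} and \cref{cor:rlminsum}, but the paper's two involutions are incompatible --- $\iota$ preserves $\EXCi$ while scrambling right-to-left minima, and $\kappa$ preserves $\RLMv$ while scrambling excedances --- and for $k>2$ the exclusion runs over short cycles rather than fixed points, so the relative-order reduction in the proof of \cref{prop:main} does not apply verbatim. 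In short: your plan is reasonable and consistent with the paper's framework, but no step of it is carried out, and the statement remains open in the paper as well.
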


The case $k=2$ follows from \cref{thm:theMainTheorem}.
For the case $k=n$, all permutations have the same sign, so the statement is trivial.
Interestingly, summing over all permutations consisting of a single cycle of length $n$,
\begin{equation}\label{eq:EulerianSum}
 \sum_{\substack{\pi \in \symS_n \\ \type(\pi) = (n)}}
 \xvec_{\RLMv(\pi)} \yvec_{ \EXCv(\pi)}
\end{equation}
is a multivariate polynomial where the number of terms (not counting multiplicity!)
seems to be given by the sequence \oeis{A124302}, which starts as
\[
 1, 1, 2, 5, 14, 41, 122, 365, 1094, \dotsc.
\]
For $n=3$, \eqref{eq:EulerianSum} is equal to
\[
 x_1 x_2 x_3 y_4 + 
 x_1 x_3 y_2 y_4 + 
 x_1 y_3 y_4 + 
 2 x_1 x_2 y_3 y_4 +
 x_1 y_2 y_3 y_4
\]
which has $5$ terms.

\subsection{Right to left minima and derangements}

Let $a_{n,k}$ be defined via
\begin{equation}
 \sum_{\pi \in \dArr_n} t^{\rlm(\pi)} = \sum_{k=1}^n a_{n,k} t^k,
\end{equation}
so that $a_{n,k}$ is the number of derangement with exactly $k$ right-to-left minima.
For example, the data for $a_{n,k}$ is shown in the table below.

\begin{table}[!ht]
\begin{tabular}{rrrrrrrr}
 & \textbf{1} & \textbf{2} & \textbf{3} & \textbf{4}& \textbf{5}& \textbf{6}& \textbf{7} \\
\toprule
\textbf{2}& 1 \\
\textbf{3}&1 & 1\\
\textbf{4}&3 & 5 & 1 \\
\textbf{5}&11 & 21 & 11 & 1 \\
\textbf{6}&53 & 113 & 79 & 19 & 1 \\
\textbf{7}&309 & 715 & 589 & 211 & 29 & 1  \\
\textbf{8}&2119 & 5235 & 4835 & 2141 & 461 & 41 & 1 \\
\end{tabular}
\caption{The number of derangements of $[n]$ with exactly $k$ right-to-left minima.}\label{tab:deArrRLM}
\end{table}

It is straightforward from the definition to see that $a_{n,1}$ is the number of derangement ending with a $1$.
The sequence $a_{n,1}$ shows up in the OEIS as \oeis{A000255}, which hints the recursion
\[
 a_{n,1} = (n-2) \cdot a_{n-1,1} + (n-3) \cdot a_{n-2,1}, \qquad a_{1,1} =1, \qquad a_{2,1} =1.
\]
Moreover, it seems that $a_{n,n-1} = (n-2) + (n-1)^2$.
The data in \cref{tab:deArrRLM} is not in the OEIS, 
and we leave it as an open problem to describe the 
entries via recursions or closed-form formulas. 

A straightforward recursion for the number of elements in $\dArr_n$ with $k$ excedances,
can be found in \cite{MantaciRakotondrajao2001}.

\subsection*{Acknowledgements}

The authors have benefited greatly from the discussions
with Jörgen Backelin of Stockholm University.
The Online Encyclopedia of Integer sequences \cite{OEIS} 
has been an invaluable resource in our project.

The second author is grateful for the financial support extended
by the cooperation agreement between International Science Program
at Uppsala University and Addis Ababa University.

\bibliographystyle{alphaurl}

\end{document}